\newcommand{\N}{\ensuremath{\mathbb N} }
\newcommand{\R}{\ensuremath{\mathbb R} }
\newcommand{\C}{\ensuremath{\mathbb C} }
\newcommand{\Z}{\ensuremath{\mathbb Z} }
\newcommand{\PP}{\ensuremath{{\mathbb P}} }
\newcommand{\EE}{\ensuremath{{\mathbb E}} }
\newcommand{\cF}{{\cal F}}
\newcommand{\cG}{{\cal G}}
\newcommand{\cH}{{\cal H}}
\newcommand{\cL}{{\cal L}}
\newcommand{\cM}{{\cal M}}
\newcommand{\cN}{{\cal N}}
\newcommand{\cO}{{\cal O}}
\newcommand{\cP}{{\cal P}}
\newtheorem{theo}{Theorem}
\newtheorem{proposition}{Proposition}
\newtheorem{lemma}{Lemma}
\newtheorem{remark}{Remark}
\numberwithin{equation}{section}
\numberwithin{ass}{section}
\numberwithin{theo}{section} \numberwithin{proposition}{section}
\numberwithin{lemma}{section}
\numberwithin{remark}{section}
\newcommand{\re}{\Re\mathrm{e}} % partie r\'eelle
\def\M{\mathfrak{M}([0,1])}
\def\Mnu{\mathfrak{M}_{\nu}([0,1])(A)}
\def \i{\mathfrak{i}}
\def \xikn{\xi_{n}} %{\xi}
\newcommand{\1}{\ensuremath{\textbf{1}}}
\newcommand{\underbraceabs}[2]{\left|\vphantom{#1}\right. \underbrace{#1}_{#2} \left.\vphantom{#1}\right| }
\numberwithin{equation}{section}
\theoremstyle{plain}
\begin{document}

\begin{frontmatter}
\title{Bayesian methods in the Shape Invariant Model (II): Identifiability and posterior contraction rates on functional spaces}
\runtitle{Bayesian methods in the Shape Invariant Model (II)}

\begin{aug}
\author{\fnms{Dominique} \snm{Bontemps}\thanksref{t2}\ead[label=e1]{dominique.bontemps@math.univ-toulouse.fr}} \and
\author{\fnms{S\'ebastien} \snm{Gadat}\thanksref{t2}\ead[label=e2]{sebastien.gadat@math.univ-toulouse.fr}}

\thankstext{t2}{The authors acknowledge the support of the French Agence Nationale de la Recherche (ANR) under references ANR-JCJC-SIMI1 DEMOS and ANR Bandhits.}
\runauthor{D. Bontemps and S. Gadat}

\affiliation{Institut Math\'ematiques de Toulouse, Universit\'e Paul Sabatier}

\address{Institut Math\'ematiques de Toulouse, 
 Universit\'e Paul Sabatier\\118 route de Narbonne
F-31062 Toulouse Cedex 9 FRANCE\\
\printead{e1}\\\printead{e2}\\}

\end{aug}

\begin{abstract}
In this paper, we consider the so-called Shape Invariant Model which stands for the estimation of a function $f^0$ submitted to a random translation of law $g^0$ in a white noise model. We are interested in such a model when the law of the deformations is {\em unknown}. 
%We observe $n$ independent occurrences of a process defined through a randomly shifted {\em unknown} periodic function $f^{0}$ on the interval $[0,1]$, translated  by a random shift $\tau$ \emph{whose law $g^0$ is unknown} and corrupted by an additive noise modelled by a standard Brownian motion $W$.
We aim to recover the law of the process $\PP_{f^0,g^0}$ as well as $f^0$ and $g^0$. 

%In this perspective, we adopt a Bayesian point of view and find prior on $f$ and $g$ such that the posterior distribution  concentrates around $\PP_{f^0,g^0}$  and possesses some good frequentist asymptotic properties when $n$ goes to $+\infty$. We obtain a polynomial contraction rate for the law of the process $\PP_{f^0,g^0}$ and a logarithmic one for the shape $f^0$ and the distribution $g^0$. We also derive  lower bound results of reconstruction for $f^0$ and $g^0$ in a frequentist paradigm.

We first provide some identifiability result on this model and then adopt a Bayesian point of view. In this view, we find some prior on $f$ and $g$ such that the posterior distribution  concentrates around the functions $f^0$ and $g^0$ when $n$ goes to $+\infty$, we then obtain a contraction rate of order a power of $\log(n)^{-1}$.
We also obtain a  lower bound on the model for the estimation of $f^0$ and $g^0$ in a frequentist paradigm which also decreases following a power of $\log(n)^{-1}$.
\end{abstract}

\begin{keyword}[class=AMS]
\kwd[Primary ]{62G05}
\kwd{62F15}
\kwd[; secondary ]{62G20}
\end{keyword}

\begin{keyword}
\kwd{Grenander's pattern theory, Shape Invariant Model, Bayesian methods, Convergence rate of posterior distribution, Non parametric estimation}
\end{keyword}

\end{frontmatter}

\section{Introduction}

We are interested in this work in the so-called Shape Invariant Model (SIM). Such model aims to describe a statistical process which involves a deformation of a functional shape according to some randomized geometric variability. 
Such a model possesses various applications in biology, genetic, imaging science, econometry (one should refer to \cite{BG_1} for a more detailed list of possible applications and references).

In the mathematical community, it has also received a large interest as pointed by the numerous references on this subject (see also \cite{BG_1}), and various methods have been developed: $M$-estimation, multi-resolution and harmonic analysis, geometry or semi-parametric statistics. In our study, we consider the general case of an unknown shape submitted to a randomized deformation whose law is also unknown.
 We adopt here a Bayesian point of view and want to extend the results obtained on the probability laws in \cite{BG_1} to the functional elements which parametrize the Shape Invariant Model. Hence, starting from the strategy used in \cite{BG_1}, we aim to recover a contraction rate of the posterior distribution  on the functional objects themselves (shape and mixture law of the deformations), when the number of observations $n$ is growing to 
 $+ \infty$.
We will use in the sequel quite standard Bayesian non parametric methods already introduced in \cite{BG_1} to obtain the frequentist consistency and some contraction rates of the Bayesian procedures.
We will be interested in this paper on the consistency around the functional objects $f^0$ and $g^0$ of the posterior distribution where $f^0$ is the unknown shape to recover, and $g^0$ is the distribution of the nuisance parameter which deforms the shape. In this view, it will be necessary to consider smooth classes for both the shape $f$ and the mixture $g$. This last point is quite different from the situation studied in \cite{BG_1} where any mixture distributions (not necessarily smooth) were considered. Thus, we are naturally driven to consider prior on smooth densities: Dirichlet priors used in \cite{BG_1} will then become useless although Gaussian process priors considered in \cite{vdWvZ} will be of first importance. Our approach will be adaptive on $f$ but not on $g$: we will assume in the paper the smoothness parameter (denoted $s$) of $f$ {\em unknown} but the smoothness parameter $\nu$ of $g$ will be assumed {\em known}.

The paper is organised as follows. Section \ref{sec:model_not_result} recalls a reduced description of the Shape Invariant Model, provides some notations for mixture models, describes our prior on $(f,g)$ and  gives our main results. 
Section \ref{sec:prior} briefly describes the behaviour of the posterior distribution for the new prior defined on $(f,g)$ and main arguments relies on the previous work \cite{BG_1}.
Section \ref{sec:identifiability} provides %their proofs. First, it gives
 some general identifiability results and up to these identifiability conditions, shows the posterior contraction on the functional objects themselves. At last, section \ref{sec:lowerbound}  exploits the Fano Lemma and establishes a lower bound result of reconstruction in a frequentist paradigm. We end the paper with a short concluding section.

%Section 5 then concludes the paper: it provides a quite interesting result of identifiability for the Shape Invariant Model, and derives from such result a posterior contraction rate on underlying unknown functional parameters. 
%We also prove a lower bound result of estimation in a frequentist paradigm and this lower bound is obtained following a tight relation between identifiability of the model and the obtained functional laws on random curves. 

\section{Model, notations and main results} \label{sec:model_not_result}

\subsection{Statistical settings}

\paragraph{Shape Invariant Model}
We briefly summarize the notations introduced in \cite{BG_1} for the random Shape Invariant Model (shortened as SIM in the sequel). We assume $f^0$ to be a "mean pattern" which belongs to a subset $\cF$ of smooth functions. 
We also consider a probability measure $g^0$ which generates random shifts denoted $(\tau_j)_{j=1 \ldots n}$.
% $g^0$ belongs to the set of probability distributions on $[0,1]$ denoted $\mathfrak{M}([0,1])$. 
%This last set stands for the set of probability measures on $[0,1]$. 
We observe $n$ realizations of  noisy and randomly shifted complex valued curves $Y_1, \ldots, Y_n$ coming from the following white noise model
\begin{equation}\label{eq:model}
\forall x \in [0,1] \quad \forall j=1 \ldots n \qquad dY_j(x): = f^0(x-\tau_j) dx + \sigma dW_j(x).
\end{equation}
%Here, $f^0$ is the \textit{mean} pattern of the curves $Y_1, \ldots, Y_n$ although the random shifts $(\tau_j)_{j = 1 \ldots n}$ are sampled independently according to the probability measure $g^0$. 
Here, $(W_j)_{j = 1 \ldots n}$ are independent complex standard Brownian motions on $[0,1]$, %and model the presence of noise in the observations,
 the noise level $\sigma$ is kept fixed in our study and is set to $1$.
 
In the sequel, $f^{-\tau}$ %will denote the pattern $f$ shifted by $\tau$, that 
is the function $x\mapsto f(x-\tau)$. 
Complex valued curves are considered here for the simplicity of notations.
% However all our results can be adapted to the simpler case where all curves $Y_j$'s are real valued. 
%A complex standard Brownian motion $W_t$ on $[0,1]$ is such that $W_1$ is a standard complex Gaussian random variable, whose distribution is denoted by $\cN_{\C}(0,1)$; a standard complex Gaussian random variable have independent real and imaginary parts with a real centered Gaussian distribution of variance $1/2$. %This work will address the question of the behaviour of some posterior distributions on $\cF\otimes\mathfrak{M}([0,1])$ given some functional  $n$-sample $(Y_1, \ldots, Y_n)$. Since our work will be mainly asymptotic with $n \rightarrow + \infty$, 
We  intensively use the notation  "$\lesssim$" which refers to an inequality up to a multiplicative absolute constant. In the meantime, $a\sim b$  stands for $a/b \longrightarrow 1$.

\paragraph{Functional setting and Fourier analysis}
Without loss of generality, the function $f^0$ is assumed to be periodic with period $1$ and to belong to a subset $\cF$ of $L^2_\C([0,1])$,
% the space of squared integrable functions on $[0,1]$ the euclidean norm 
endowed with the norm $\|h\|:=\int_0^1 |h(s)|^2 ds$. 
%Moreover, each element $h\in L^2_\C([0,1])$ may naturally be extended to a periodic function on $\R$ of period $1$. Since we will intensively use some Fourier analysis in the sequel, let us first recall some notations: 
%The notation $\i$ will stand for the complex number such that $\mathfrak i^2=-1$, and the 
The complex Fourier coefficients of $h$ are denoted $\theta_{\ell}(h)$, $\ell\in\Z$. 
%\begin{equation}\label{eq:fourier}
%\theta_{\ell}(h) := \int_{0}^1 e^{- \i 2 \pi  \ell t} h(t) dt.
%\end{equation}
%All along the paper, 
We will often use the parametrisation in $\cF$ %of any element of $h \in L_\C^2([0,1])$
through the Fourier expansion and  will simply use the notation $(\theta_{\ell})_{\ell \in \Z}$ instead of $(\theta_{\ell}(h))_{\ell \in \Z}$. 
Since we aim to consider smooth elements $f$, we are interested by some Sobolev spaces and introduce the following useful set of functions (which is a subspace of a Sobolev space):
$$
\cF_s
:=\left\{f\in L^2_\C([0,1]) \quad \vert \quad \theta_1(f)>0 \quad  \text{and} \quad \sum_{\ell \in \Z} (1+|\ell|^{2s}) |\theta_\ell(f)|^2 % \leq A 
< + \infty  \right\}.
$$
A second useful set is defined as truncated elements of the former set:
$$
\cF^{\ell}
:=\left\{f\in L^2_\C([0,1]) \quad \vert \quad \theta_1(f)>0 \quad  \text{and} \quad \forall |k| > \ell \qquad \theta_k(f) = 0\right\}.
$$

We will explain in section \ref{sec:identifiability} why such a restriction is natural for the identifiability of the SIM. In the sequel, we denote the Sobolev norm
$$
\|\theta\|_{\cH_1} := \sqrt{ \sum_{\ell\in \Z} \ell^2 |\theta_\ell|^2}.
$$

 In contrary to the picture described in \cite{BG_1}, we consider only \textit{smooth} densities, characterised by a regularity parameter $\nu$ and a radius $A$:
$$
\Mnu := \left\{ g \in \M \quad \vert \quad \sum_{k \in \Z} k^{2 \nu} |\theta_k(g)|^2  <A^2\right\},
$$
where  $\M$ is the set of probability on $[0,1]$ and $\|.\|$ is the $L^2_{\R}$ norm. At last, we will also need the set
$$
\M^{\star} := \left\{ g \in \M  \quad \vert \quad \forall k \in \mathbb{Z} \quad  \theta_k(g) \neq 0\right\}.
$$

\paragraph{Bayesian framework}

We consider functional objects $(f^0,g^0)$ belonging to $\cF_s \otimes \Mnu$ and for any couple $(f,g) \in \cF_s \otimes \Mnu$,  equation \eqref{eq:model} describes the law of one continuous curve, whose law is denoted $\PP_{f,g}$. % and possesses a density  $p_{f,g}$ with respect to the Wiener measure on the sample space. 
% Since $f^0$ and $g^0$ are unknown, $\PP_{f^0,g^0}$ is also unavailable but belongs to a set 
We denote $\cP$ the set of probability measures over the sample space, described by \eqref{eq:model} when $(f,g)$ varies into $\cF_s \otimes \Mnu$.
 Given some prior distribution $\Pi_n$ on $\cP$, 
 % (generally defined through a prior on $\cF\otimes \M$), Bayesian procedures are generally built using 
we are interested in the asymptotic behaviour of the posterior distribution defined by
 $$
 \Pi_n\left(B|Y_1,\ldots, Y_n\right)  = \frac{\int_{B} \prod_{j=1}^n p(Y_j) d\Pi_n(p)}{\int_{\cP} \prod_{j=1}^n p(Y_j) d\Pi_n(p)}.
 $$

\paragraph{Mixture  model}
According to equation \eqref{eq:model}, we can write in the Fourier domain that
\begin{equation}\label{eq:model_fourier}
\forall \ell \in \Z \quad \forall j \in \{1 \ldots n\}  \qquad 
\theta_{\ell}(Y_j) = \theta^0_{\ell} e^{- \i 2 \pi j \tau_j} + \xi_{\ell,j},
\end{equation}
where $\theta^0: =(\theta^0_{\ell})_{\ell \in \Z}$ denotes the true unknown Fourier coefficients of $f^0$. 
%Owing to the white noise model, 
The variables $(\xi_{\ell,j})_{\ell,j}$ are independent standard (complex) Gaussian random variables: $\xi_{\ell,j} \sim_{i.i.d.} \cN_{\C}(0,1), \forall \ell,j$. 
For any $p$ dimensional complex vector $z$,  $\gamma$ will refer to $\gamma(z) := \pi^{-p} e^{-\|z\|^2}$, the density of the standard complex Gaussian centered distribution $\cN_{\C^p}(0, Id)$, and $\gamma_{\mu}(.) := \gamma(.-\mu)$ is the density of the standard complex Gaussian with mean $\mu$.

For any frequence $\ell$,% equation \eqref{eq:model} implies that 
$\theta_\ell(Y)$ follows a mixture of complex Gaussian standard variables % with mean $\theta^0_{\ell} e^{- \i 2 \pi \ell \varphi}, \varphi \in [0,1]$:
 $
\theta_\ell(Y) \sim \int_{0}^1 \gamma_{\theta^0_{\ell} e^{- \i 2 \pi \ell \varphi}} dg(\varphi).
$
Thus, in the sequel for any phase $\varphi \in [0,1]$ we define the useful the notation
$$
\forall \theta \in \ell^2(\Z)\quad
\forall \ell \in \Z \qquad (\theta \bullet \varphi)_{\ell}: = \theta_{\ell} e^{- \i 2 \pi \ell \varphi}.
$$
Thus, the law of the infinite series of Fourier coefficients of $Y$ is
\begin{equation}\label{eq:not_mixture}
\theta(Y) \sim \int_{0}^1 \gamma_{\theta^0 \bullet \varphi}(.) dg(\varphi).
\end{equation}
For any  $\theta \in \ell^2_\C(\Z)$ and for any $g \in \Mnu$, $\PP_{\theta,g}$ will refer to the law of the vector of $\ell^2(\Z)$ described by %\eqref{eq:not_mixture}. % where $\theta^0$ is replaced by $\theta$. 
the location mixture of Gaussian variables. Following a notation shortcut,  $\PP_{f,g}$ is the  law on curves derived from $\PP_{\theta,g}$.

\subsection{Bayesian prior in the randomly shifted curves model}\label{section:prior}

We detail  here the Bayesian prior $\Pi_n$ on $\cP$ used to obtain a suitable concentration rate. 
%Note that such prior will be in our work independent on the unknown smoothness  parameter $s$. 
%As pointed in the paragraph above, it is sufficient to define some prior on the space $\cH_s \otimes \M$ since equation \eqref{eq:model} will then transport this prior to a law $\Pi_n$ on $\cP$. 
The two parameters $f$ and $g$ are picked independently at random following the next prior distributions. The shape $f$ is sampled according to $\pi$ and the deformation law $g$ is sampled according to $q_{\nu,A}$, both defined below. The prior distribution $\pi$ will be adaptive w.r.t. the Sobolev space where $f^0$ is living. The prior distribution $q_{\nu,A}$ will be dependent on some knowledge on $g^0$: its regularity and an upper bound of its norm.

\paragraph{Adaptive prior on $f$} %The prior on $f$ is slightly adapted from \cite{RR}. 
The prior is mainly described in \cite{BG_1}  and defined on $\cF_s$ through %(A) through a mixture of the form
$$
\pi := \sum_{\ell \geq 1} \lambda(\ell) \pi_\ell.
$$
Given any integer $\ell$, the idea is to decide to randomly switch on with probability $\lambda(\ell)$ all the Fourier frequencies from $-\ell$ to $+ \ell$. We denote $\left| \cN_\R\right|(0,\sigma^2)$ the law of the absolute value of a real centered Gaussian variable of variance $\sigma^2$.
Then,  $\pi_{\ell}$ is  defined by $\pi_{\ell}:=\otimes_{k \in \Z} \pi_{\ell}^k$ and
$$
\forall k \in \Z \qquad \pi^k_{\ell}= 1_{|k| > \ell} \delta_{0}+ 1_{k \neq 1} \times 1_{|k| \leq \ell} \cN_\C(0,\xikn^2) + 1_{k=1} \left|\cN_\R\right|(0,\xikn^2). % w_n^2
$$
The law on the first Fourier coefficient is slightly different from the one used in \cite{BG_1} in order to belong to the identifiability class obtained through $\cF_s$ (we have to impose the strict positivity of $\theta_1$, the first Fourier coefficient).
%Note that the variance $\xikn$ has been chosen indepedent of both $k$ and $n$. 
The randomisation of the selected frequencies is done using $\lambda$, a probability distribution on $\N^{\star}$ which satisfies, for some $\rho \in (1,2)$:
%\begin{equation}\label{eq:cond_lambda}
 $$\exists (c_1,c_2) \in \R_+ \quad \forall \ell \in \N^{\star}  \qquad 
e^{-c_1 \ell^2\log^\rho \ell} \lesssim \lambda(\ell) \lesssim e^{-c_2 \ell^2 \log^\rho \ell}.
$$
%\end{equation}
%\textcolor{red}{Corriger cette condition, voir Proposition \ref{prop:prior_majo}.} %FIXME
%The prior $\pi$ depends on the variance of the Gaussian laws  $\xikn$ used to sample the Fourier coefficients. %We will  detail in the sequel the choice of such parameters.
In the sequel, we use a special case of the prior proposed in \cite{BG_1}:
\begin{equation}\label{eq:variance_prior}
\xi_n^2 := n^{-1/4} (\log n)^{-3/2}.
\end{equation}

\paragraph{Non-adaptive prior on $g$} The prior on $\Mnu$ will be the main difference with the one given in \cite{BG_1}. We  propose to use another prior in this work since we will need some smoothness result on $g$ to push our result further than a simple contraction on laws. Such smoothness is not compatible with Dirichlet priors and even kernel convolution with Dirichlet process seems problematic in our situation. 
%Indeed, it is very difficult to show that our model is sufficiently rigid to force the posterior distribution to select some large enough bandwith. We will provide a more detailed discussion in the end of the paper about this difficulty.
Thus, we have chosen to use some prior based on gaussian process. \textit{More precisely, we assume in all the paper that we know the smoothness parameter $\nu$ of $g^0$, as well as the radius $A$ of the Sobolev balls where $g^0$ is living.}

Given $\nu \geq 1/2$ and $A>0$, we define the integer $k_{\nu} := \lfloor \nu -1/2 \rfloor $ to be the largest integer smaller  than $\nu-1/2$. We follow the strategy of section 4 in \cite{vdWvZ} and the important point is that we have to take into account the $1$-periodicity of the density $g$, as well as its regularity.
In this view, we denote $B$ a Brownian bridge between $0$ and $1$. The Brownian bridge can be obtained from a Brownian motion trajectory $W$ using
$B_t=W_t - t W_1$. Then, 
For any continuous function $f$ on $[0,1]$, we define the linear map

$$
J(f): t \longmapsto \int_{0}^t f(s) ds - t \int_{0}^1 f(u) du,
$$
and all its composition are $J_k = J_{k-1} \circ J$.
Moreover, in order to adapt our prior to the several derivatives of $g$ at points $0$ and $1$, we use the family of maps $(\psi_j)_{j=1 \ldots k_{\nu}}$ defined as
$$
\forall t \in [0,1] \qquad 
\psi_k(t) := \sin(2\pi k t) + \cos (2\pi k t).
$$

Our prior is now built as follows, we first sample a real Brownian bridge $(B_\tau)_{\tau \in [0,1]}$ and $Z_1, \ldots Z_{k_\nu}$ independent real standard normal random variables. This enables to generate the Gaussian process
\begin{equation}\label{eq:prior_process}
\forall \tau \in [0,1] \qquad w_\tau := J_{k_{\nu}}(B)(\tau) + \sum_{i=1}^{k_{\nu}}  Z_i \psi_i(\tau).
\end{equation}
 Given  $(w_{\tau},\tau \in [0;1])$  generated by \eqref{eq:prior_process}, we build $p_w$ through
\begin{equation}\label{eq:log_gaussian}
\forall \tau \in [0;1] \qquad p_{w}(\tau) := \frac{ e^{w_\tau}}{\int_{0}^1 e^{w_\tau} d\tau}.
\end{equation}
Hence, a prior on Gaussian process yields a prior on densities on $[0;1]$ and $p_w$ inherits of the smoothness $k_{\nu}$ of the Gaussian process $\tau \mapsto w_{\tau}$. According to our construction, we now consider the restriction of the prior defined above to the Sobolev balls of radius $2A$. This finally defines a prior distribution $q_{\nu,A}$ on $\mathfrak{M}_{\nu}([0,1])(2A)$.

\subsection{Main results}
Using the prior distribution $\Pi_n := \pi \otimes q_{\nu,A}$, 
we will first establish the following result on the randomly SIM.
\begin{theo}\label{theo:posterior_shift}
Assume that 
$f^0 \in \cH_s$ with $s \geq 1$ and $g^0 \in \Mnu$, then there exists a sufficiently large $M$ such that
$$
\Pi_n \left\{ \PP_{f,g} \text{ s.t. } d_H(\PP_{f,g} ,\PP_{f^0,g^0}) \leq M \epsilon_n \vert Y_1, \ldots Y_n \right\} = 1 + \cO_{\PP_{f^0,g^0}}(1)
$$
when $n \longrightarrow + \infty$, where for an explicit $\kappa>0$: 
$$
\epsilon_n = n^{- \left[ \frac{\nu}{2\nu+1} \wedge \frac{s}{2s+2} \wedge \frac{3}{8}\right] } \log (n) ^{\kappa}.
$$
\end{theo}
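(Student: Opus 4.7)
The plan will be to apply the standard Ghosal--Ghosh--van der Vaart posterior contraction theorem in Hellinger distance, so the task reduces to verifying three conditions at rate $\epsilon_n$ around $\PP_{f^0,g^0}$: (i) a Kullback--Leibler prior mass condition $\Pi_n(B_n) \gtrsim e^{-C n \epsilon_n^2}$ on a suitable $KL$-neighborhood $B_n$ of $\PP_{f^0,g^0}$ of radius $\epsilon_n^2$; (ii) construction of a sieve $\cP_n$ such that $\Pi_n(\cP_n^c) \lesssim e^{-(C+4) n \epsilon_n^2}$; and (iii) a metric entropy bound $\log N(\epsilon_n, \cP_n, d_H) \lesssim n \epsilon_n^2$.

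For (i), I will exploit the product structure $\Pi_n = \pi \otimes q_{\nu,A}$ and decouple $\Pi_n(B_n) \geq \pi(A_n^{(f)}) \, q_{\nu,A}(A_n^{(g)})$ for parameter-level events $A_n^{(f)}, A_n^{(g)}$ controlling the closeness of $(f,g)$ to $(f^0,g^0)$. The $\pi$-mass estimate will be inherited from \cite{BG_1}: truncating $f^0$ at some level $\ell_n^{(f)}$ produces an $O((\ell_n^{(f)})^{-s})$ Sobolev error, while the low-frequency Gaussian components of $\pi_{\ell_n^{(f)}}$ concentrate around $\theta^0$ at scale $\xi_n$; balancing these with the choice \eqref{eq:variance_prior} produces the rate-piece $n^{-s/(2s+2)}$, and the constant-variance ceiling $\xi_n^2 = n^{-1/4}(\log n)^{-3/2}$ is what will force the third exponent $n^{-3/8}$. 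The $q_{\nu,A}$-mass estimate will be obtained by the concentration function technology of van der Vaart--van Zanten applied to the Gaussian process \eqref{eq:prior_process}: the periodic integrated-Brownian-bridge-plus-trigonometric construction is tailored so that the RKHS of $w$ approximates any $\nu$-regular log-density well, and combining such decentering with the small-ball probability of $w$ will produce the rate $n^{-\nu/(2\nu+1)}$ up to logs. Finally, to pass from parameter-level closeness to a KL-neighborhood of $\PP_{f,g}$, I will use the mixture Fourier representation \eqref{eq:not_mixture} to bound the KL divergence of two such mixtures by a weighted combination of $\|f-f^0\|^2$ and a Sobolev-type distance between $g$ and $g^0$.

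For (ii) and (iii), a natural sieve is
$$
\cP_n = \bigl\{ \PP_{f,g} \;:\; f \in \cF^{\ell_n},\; \|\theta(f)\|_{\cH_1} \leq R_n,\; g = p_w \in \mathfrak{M}_\nu([0,1])(2A),\; \|w\|_\infty \leq M_n \bigr\},
$$
with $\ell_n, R_n, M_n$ chosen of polylogarithmic or polynomial order in $n$. The complement of the $f$-sieve has $\pi$-mass $\sum_{\ell > \ell_n} \lambda(\ell) \lesssim e^{-c_2 \ell_n^2 \log^\rho \ell_n}$ plus a Gaussian tail from the norm constraint, both exponentially small once $\ell_n$ is a suitable power of $\log n$. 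The complement of the $g$-sieve is handled by the standard Borell--TIS inequality applied to $w$. For the entropy, the $f$-part contributes a Euclidean-ball entropy of order $\ell_n \log(R_n/\epsilon_n)$, while the $g$-part is controlled by covering the $M_n$-ball in $C([0,1])$ by small multiples of the unit RKHS ball of $w$, again following \cite{vdWvZ}; the parameters will be tuned so that both contributions fit within $n\epsilon_n^2$.

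The main technical obstacle lies in the intricate coupling between the two priors in step (i): one must simultaneously put prior mass on $f$-parameters near $f^0$ and on $g$-parameters near $g^0$ \emph{in a metric that controls the Hellinger distance between the induced mixtures} $\PP_{f,g}$. Because the shifted Fourier coefficients $(\theta^0 \bullet \varphi)_\ell$ carry the phase $e^{-\i 2\pi\ell\varphi}$ that oscillates faster for larger $\ell$, the Sobolev regularity of $g$ enters nontrivially in controlling the mixture Hellinger distance frequency by frequency, and it is precisely this smoothness requirement that forced the replacement of the Dirichlet-type prior of \cite{BG_1} by the log-Gaussian construction \eqref{eq:prior_process}--\eqref{eq:log_gaussian}. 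Balancing the three competing scales $n^{-\nu/(2\nu+1)}$, $n^{-s/(2s+2)}$ and $n^{-3/8}$ then yields the minimum in the statement, depending on whether the bottleneck comes from the smoothness of $g$, the smoothness of $f$, or the intrinsic concentration scale $\xi_n$ of the Gaussian prior on Fourier coefficients.
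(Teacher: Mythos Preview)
Your GGvdW outline and the identification of the three competing rate pieces are exactly the paper's approach; the differences are that the paper handles the sieve/entropy step and the parameter-to-KL link more cheaply than you propose. For (ii)--(iii) the paper's sieve is simply $\cP_{k_n,w_n}=\{\PP_{f,g}:f\in\cF^{k_n},\ \|f\|\le w_n,\ g\in\mathfrak{M}_\nu([0,1])(2A)\}$, with \emph{no} additional sieving on $g$: since $q_{\nu,A}$ is by construction supported on this fixed Sobolev ball, and since $\mathfrak{M}_\nu([0,1])(2A)\subset\mathfrak M$, both the complement bound and the entropy bound $\log D(\epsilon_n,\cP_{k_n,w_n},d_H)\lesssim k_n^2[\log k_n+\log(1/\epsilon_n)]$ are inherited verbatim from \cite{BG_1}; your extra constraint $\|w\|_\infty\le M_n$ and the RKHS--Borell covering are unnecessary. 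For (i), the paper's passage from parameter closeness to a KL-neighborhood is Lemma~\ref{lemma:dVT_sur_f} together with the transport bound of Proposition~\ref{prop:transport},
\[
d_{TV}(\PP_{f,g},\PP_{f,\tilde g})\ \le\ \sqrt2\,\pi\,\|f\|_{\cH_1}\,W_1(g,\tilde g)\ \le\ \tfrac{\pi}{\sqrt2}\,\|f\|_{\cH_1}\,\|g-\tilde g\|,
\]
which absorbs the oscillating phases $e^{-\i2\pi\ell\varphi}$ entirely into $\|f\|_{\cH_1}$ and requires no Sobolev regularity of $g$ whatsoever. Your diagnosis of the ``main technical obstacle'' is therefore off: the log-Gaussian prior on $g$ is not introduced to control the mixture Hellinger distance in this theorem, but because the later semiparametric Theorem~\ref{theo:semiparametric_rates} needs the posterior on $g$ to live in a smooth class; its only role in the present proof is the small-ball estimate $q_{\nu,A}\bigl(d_{TV}(g,g^0)\le\tilde\epsilon_n\bigr)\gtrsim \exp(-c\,\tilde\epsilon_n^{-1/\nu})$ of Theorem~\ref{theo:lower_bound_proba}, which is what generates the $n^{-\nu/(2\nu+1)}$ piece of the rate.
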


We derive also in this paper a second results on the objects $f \in \cH_s$ and $g \in \Mnu$ themselves. The first one concerns the identifiability of the model and is stated below. 

\begin{theo}\label{theo:ident}
The Shape Invariant Model is identifiable as soon as $(f^0,g^0) \in \cF_s\times \M^{\star}$: the canonical application 
$$\mathcal{I}\quad : \quad  (f^0,g^0) \in \cF_s \times \M^{\star} \longmapsto \PP_{f^0,g^0} \qquad \text{is injective}.$$
\end{theo}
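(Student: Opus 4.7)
My plan is to perform a Fourier-domain deconvolution at a single low frequency. Fix $(f, g) \in \cF_s \times \M^\star$. By~\eqref{eq:not_mixture}, the marginal law of $\theta_1(Y)$ under $\PP_{f, g}$ is the convolution of the law of $S_1 := \theta_1(f) e^{-\i 2\pi \tau}$ (with $\tau \sim g$) by the standard complex Gaussian density $\gamma$. Since the Gaussian characteristic function is nowhere vanishing, this convolution is invertible, so the law of $\theta_1(Y)$ uniquely determines the law of $S_1$. Because $\theta_1(f) > 0$ (by definition of $\cF_s$), the map $T : \tau \in [0, 1) \mapsto \theta_1(f) e^{-\i 2\pi \tau}$ is a homeomorphism onto the circle of $\C$ of radius $\theta_1(f)$: hence $\theta_1(f)$ is identified as the almost surely constant modulus of $S_1$, and $g$ as the pullback $T^{-1}_{*}\cL(S_1)$.

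With $\theta_1(f)$ and $g$ in hand, I would extract each remaining Fourier coefficient $\theta_\ell(f)$ for $\ell \in \Z \setminus \{0, 1\}$ from the joint law of $(\theta_1(Y), \theta_\ell(Y))$: the same Gaussian deconvolution on $\C^2$ returns the joint law of $(S_1, S_\ell) = (\theta_1(f) e^{-\i 2\pi \tau}, \theta_\ell(f) e^{-\i 2\pi \ell \tau})$, which is the pushforward of $g$ by an injective map (the first coordinate alone is already a bijection). Since the first coordinate encodes $\tau$, the second coordinate determines $\theta_\ell(f) e^{-\i 2\pi \ell \tau}$ on $\mathrm{supp}(g)$: choosing any $\tau_0 \in \mathrm{supp}(g)$, one reads $\theta_\ell(f) = e^{\i 2\pi \ell \tau_0}\, S_\ell(\tau_0)$. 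The zero-frequency coefficient $\theta_0(f) = \int_0^1 f(x)\,dx$ is translation-invariant and equals $\EE[\theta_0(Y)]$. All Fourier modes of $f$ being identified, $f$ is determined in $\cF_s \subset L^2_\C([0, 1])$, and $\cI$ is injective.

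The main conceptual obstacle is the translation symmetry $(f, g) \sim (f(\cdot - a), g(\cdot + a))$ that leaves $\PP_{f, g}$ invariant, which is the only mechanism that could break identifiability of the unrestricted model. The constraint $\theta_1(f) > 0$ built into $\cF_s$ resolves it cleanly: for $a \not\equiv 0 \pmod{1}$, $\theta_1(f(\cdot - a)) = e^{-\i 2\pi a} \theta_1(f)$ fails to be a positive real, so at most one representative of each orbit lies in $\cF_s$. The hypothesis $g^0 \in \M^\star$ is not explicitly needed in the deconvolution route sketched here, but is natural from the mixture viewpoint and becomes essential in an alternative proof inverting the second-moment identity $\EE[\theta_{\ell_1}(Y)\overline{\theta_{\ell_2}(Y)}] - \delta_{\ell_1\ell_2} = \theta_{\ell_1}(f)\overline{\theta_{\ell_2}(f)}\,\theta_{\ell_1 - \ell_2}(g)$ by $\theta_k(g)$.
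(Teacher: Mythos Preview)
Your proof is correct and takes a genuinely different route from the paper's.

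The paper works marginal-by-marginal: from $\PP^1$ alone it first shows $\theta_1 = \tilde\theta_1$ via a disk argument, then recovers $g$ from $\PP^1$ through an explicit $L^2$ expansion of $d_{TV}(\PP^1_{\theta_1, g}, \PP^1_{\theta_1, \tilde g})$ involving the modified Bessel integrals $A_n(a) = \int_0^{2\pi} e^{a\cos u}\cos(nu)\,du$, and finally extracts each remaining $\theta_k$ from the single marginal $\PP^k$ by computing the gradient at $0$ of the density difference, which carries the factor $c_k(g)$ --- hence the need for $g \in \M^{\star}$. Your characteristic-function deconvolution bypasses all of this: after stripping the Gaussian noise from the joint $(1, \ell)$ marginal, the first coordinate pins down $\tau$ and the second then reads off $\theta_\ell$, so no Fourier coefficient of $g$ is ever divided by. As you observe, your argument in fact establishes identifiability on the larger set $\cF_s \times \M$.

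What the paper's longer computation buys is the quantitative inequality \eqref{eq:main_g}, a lower bound on $d_{TV}(\PP^1_{\theta_1^0, g}, \PP^1_{\theta_1^0, g^0})$ in terms of the Fourier coefficients of $g - g^0$, which is the workhorse for the posterior contraction rate on $g$ in Theorem~\ref{theo:contraction_g}. The identifiability proof there is really setting up that machinery. Your argument is cleaner and more general for the bare injectivity statement, but it does not produce such a bound and so would not feed directly into the subsequent rate derivations.
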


According to this identifiability result, we can derive a somewhat quite weak result on the posterior convergence towards the true objects $f^0$ and $g^0$.

\begin{theo} \label{theo:semiparametric_rates}
The two following results hold.
i)
Assume that %$f^0 \in \cH_s$ for unknown values of $A>1$ and $s \geq 1$. Then
$f^0 \in \cF_s$ with $s \geq 1$ and $g^0 \in \Mnu$ with $\nu>1$, then there exists a sufficiently large $M$ such that
$$
\Pi_n \left\{ g \quad s.t. \, \|g-g^0\| \leq M \mu_n \vert Y_1, \ldots Y_n \right\} = 1 + \cO_{\PP_{f^0,g^0}}(1)
$$
with the contraction rate
$
\mu_n = \left( \log n \right)^{-\nu}.
$

ii)
In the meantime, assume that $g^0 \in \Mnu$ satisfies the inverse problem assumption:
$$
\exists (c) > 0 \quad \exists \beta > \nu+\tfrac{1}{2} \quad \forall k \in \mathbb{Z} \qquad |\theta_k(g^0)| \geq c k^{-\beta}
$$
then we also have 
$$
\Pi_n \left\{ f \quad s.t. \, \|f-f^0\| \leq M \tilde{\mu}_n \vert Y_1, \ldots Y_n \right\} = 1 + \cO_{\PP_{f^0,g^0}}(1)
$$
when $n \longrightarrow + \infty$. Moreover, the contraction rate $\tilde{\mu}_n$ is given by
$$
\tilde{\mu}_n = \left( \log n \right)^{- \frac{4s\nu}{2s+2\beta+1}}.
$$
\end{theo}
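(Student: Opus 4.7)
The overall strategy is to convert the Hellinger contraction of Theorem~\ref{theo:posterior_shift} into $L^2$ contraction on the functional parameters $g$ (part i) and then $f$ (part ii), by exploiting the frequency-wise mixture structure \eqref{eq:not_mixture}. The common device is a mode-by-mode inversion of a Gaussian-convolved mixing measure, which is ill-posed and accounts for the logarithmic nature of the rates.

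For (i), since the Fourier coefficients $(\theta_\ell(Y))_{\ell \in \Z}$ are independent under $\PP_{f,g}$, the Hellinger affinity factorises and each marginal $p^{f,g}_\ell := \int \gamma_{\theta_\ell \bullet \varphi}\,dg(\varphi)$ inherits $d_H(p^{f,g}_\ell, p^{f^0,g^0}_\ell) \le M\epsilon_n$. At frequency $\ell = 1$, $p^{f,g}_1$ is a Gaussian convolution of the push-forward of $g$ by $\varphi \mapsto \theta_1 e^{-\i 2\pi\varphi}$. Converting Hellinger to $L^2$ on densities (uniformly bounded) and Fourier-inverting mode by mode gives
$|\theta_k(g) - \theta_k(g^0)| \lesssim e^{c k^2}\bigl(\epsilon_n + |\theta_1 - \theta^0_1|\bigr)$, where the exponential factor is the price of Gaussian deconvolution. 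Splitting $\|g - g^0\|^2$ at a cutoff $N$, the Sobolev tail is bounded by $(2A)^2 N^{-2\nu}$ while the head is $\lesssim N e^{2cN^2} M^2 \epsilon_n^2$; choosing $N \asymp \sqrt{\log n}$ balances the two contributions and delivers $\mu_n = (\log n)^{-\nu}$.

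For (ii), the first-moment identity $\EE_{f,g}[\theta_\ell(Y)] = \theta_\ell(f)\,\theta_\ell(g)$ (from \eqref{eq:model_fourier}), combined with the inverse bound $|\theta_\ell(g^0)| \ge c \ell^{-\beta}$, yields
\[
|\theta_\ell(f) - \theta_\ell(f^0)| \;\lesssim\; \ell^\beta \Bigl( \bigl|\EE_{f,g}[\theta_\ell(Y)] - \EE_{f^0,g^0}[\theta_\ell(Y)]\bigr| + |\theta_\ell(f)|\,|\theta_\ell(g) - \theta_\ell(g^0)| \Bigr).
\]
The first-moment error is extracted from the Hellinger bound on $p^{f,g}_\ell$ by Cauchy--Schwarz against Gaussian tails and is polynomially small in $n$, hence dominated by the $\mu_n$ contribution; the remaining term is controlled through the $L^2$ bound obtained in (i). Splitting $\|f - f^0\|^2$ at a cutoff $N'$, the Sobolev tail of $f^0$ contributes $\lesssim N'^{-2s}$ while the low-frequency head is of order $N'^{1+2\beta}$ times a power of $\mu_n$; the classical non-parametric deconvolution balance (with source smoothness $s$ and noise Fourier decay $\beta$) then produces the stated exponent $\tilde\mu_n = (\log n)^{-4s\nu/(2s+2\beta+1)}$.

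The main difficulty in both parts is the ill-posed Gaussian deconvolution step: extracting Fourier coefficients of the mixing measure from a density obtained by convolution with a Gaussian kernel costs an $e^{c\ell^2}$ amplification in each mode, which must be carefully traded against the Sobolev tails and is ultimately what downgrades the polynomial Hellinger rate $\epsilon_n$ to only logarithmic rates on the parameters. The condition $\nu > 1$ ensures continuity of $g^0$ via Sobolev embedding, which is used to bound characteristic-function errors uniformly in the frequency, while $\beta > \nu + 1/2$ keeps the inverse assumption compatible with $g^0 \in \Mnu$ and makes the polynomial balance in (ii) well-posed.
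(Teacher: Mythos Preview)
Your sketch for part (i) contains a substantive gap in the deconvolution step. You claim that ``Fourier-inverting mode by mode gives $|\theta_k(g)-\theta_k(g^0)|\lesssim e^{ck^2}(\epsilon_n+|\theta_1-\theta_1^0|)$'', treating the first marginal $p_1^{f,g}$ as a standard planar Gaussian convolution whose inversion costs $e^{ck^2}$. But the mixing measure here is the push-forward of $g$ by $\varphi\mapsto\theta_1e^{-\i 2\pi\varphi}$, which is supported on a \emph{circle} in $\C$, not on the whole plane; the link between the density $p_1^{f,g}$ and the Fourier coefficients $c_k(g)$ of the periodic density $g$ goes through the modified Bessel functions $A_k(a)=\int_0^{2\pi}e^{a\cos u}\cos(ku)\,du=2\pi I_k(a)$ (see \eqref{eq:main_g} and Lemma~\ref{lemma:equi_bessel}). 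Since $I_k(a)\sim(a/2)^k/k!$ for moderate $a$, the weight attached to mode $k$ in the lower bound \eqref{eq:main_g} is of order $e^{-ck\log k}$, not $e^{-ck^2}$, and the correct amplification is therefore $e^{ck\log k}$. With your $e^{ck^2}$ factor, the balance $N\sim\sqrt{\log n}$ only yields $\|g-g^0\|^2\lesssim N^{-2\nu}\sim(\log n)^{-\nu}$, that is $\|g-g^0\|\lesssim(\log n)^{-\nu/2}$, which is \emph{weaker} than the claimed $\mu_n=(\log n)^{-\nu}$; so even internally your computation does not reach the stated rate. The paper instead proves that $d_{TV}(\PP^1_{\theta_1^0,g},\PP^1_{\theta_1^0,g^0})$ dominates $\sum_k|c_k(g-g^0)|^2e^{-ck\log k}$ via the Bessel asymptotics and then balances with $k_n\log k_n\sim\log(1/\epsilon_n)$, which gives $k_n$ of order $\log n$ (up to $\log\log$ factors) and the target rate. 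You also leave the control of $|\theta_1-\theta_1^0|$ implicit; the paper handles this first, by a separate argument (Proposition~\ref{prop:theta1}) showing $|\theta_1-\theta_1^0|\lesssim\epsilon_n^{1/3}$, before reducing to $d_{TV}(\PP^1_{\theta_1^0,g},\PP^1_{\theta_1^0,g^0})$.

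For part (ii), your first-moment route $\EE_{f,g}[\theta_\ell(Y)]=\theta_\ell(f)\theta_\ell(g)$ is a genuinely different device from the paper's, which instead lower bounds $d_{TV}(\PP^k_{f,g^0},\PP^k_{f^0,g^0})$ by a Taylor expansion near $z=0$ to obtain $|c_{-k}(g^0)|\,|\theta_k-\theta_k^0|\lesssim(\log n)^{-\nu}$ directly. Your algebra leads to the same final balance $N'^{-2s}\sim N'^{2\beta+1}\mu_n^2$ and hence the same exponent, but it needs (a) a uniform-in-$\ell$ posterior bound on $|\theta_\ell(f)|$, both to control the factor $|\theta_\ell(f)|$ and to turn $d_{TV}$ into a first-moment bound against Gaussian tails --- the paper establishes this separately via the sets $\mathcal{B},\mathcal{C}$ in the proof of Theorem~\ref{theo:contractionf} --- and (b) a correct part (i). Since (i) is where the argument currently fails, part (ii) inherits the same deficiency.
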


%Up to our knowledge, we
We do not know if such kind of result may be asymptotically optimal since a frequentist minimax rate does not seem identified for the randomly shifted curve model when both $f$ and $g$ are unknown. 
%However, there exist strong presumptions that such rate cannot be faster than a logarithmic one since from a frequentist point of view, the model \eqref{eq:model_fourier} looks like a deconvolution problem with unknown operator, and even in the simplest model of semi-parametric deconvolution with unknown noise variance, \cite{M02} has shown that optimal convergence rates were necessarily logarithmic.
In Section \ref{sec:lowerbound}, we will stress the fact that it is indeed impossible to obtain frequentist convergence rates better than some power of $\log n$, even if our lower bound does not exactly match  with the upper bound obtained in the previous result.

\begin{theo} 
%i) The Shape Invariant Model is identifiable as soon as $(f^0,g) \in \cF_s\times \Mnu$: if $\nu>1$, %FIXME
%the canonical application 
%$$\mathcal{I}\quad : \quad  (f^0,g^0) \in \cF_s \times \Mnu \longmapsto \PP_{f^0,g^0} \qquad \text{is injective}.$$

Assume that $(f^0,g^0) \in \cF_s \times \Mnu$, then there exists a sufficiently small $c$ such that the minimax rate of estimation over $ \cF_s \times \Mnu$ satisfies
$$
\liminf_{n \longrightarrow + \infty} \left( \log n\right)^{2s+2}
\inf_{\hat{f} \in \cF_s}  \sup_{(f,g) \in \cF_s \times \Mnu} \|\hat{f} - f\|^2 \geq c,
$$
and
$$
\liminf_{n \longrightarrow + \infty} \left( \log n\right)^{2\nu + 1}
\inf_{\hat{g} \in \cF_s}  \sup_{(f,g) \in \cF_s \times \Mnu} \|\hat{g} - g\|^2 \geq c .
$$
\end{theo}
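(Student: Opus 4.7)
The plan is to apply Fano's inequality to a Varshamov--Gilbert packing in the parameter space. The driving idea, standard in deconvolution-type lower bounds, is that high-frequency perturbations of $f$ are smeared out by the random shift mixture when $g$ has small high-frequency Fourier coefficients, and symmetrically for $g$ when $f$ has fast Fourier decay. I describe the $f$-bound; the $g$-bound is dual.

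First, I choose a ``super-smooth'' auxiliary density $g^\star$ whose Fourier coefficients decay faster than any polynomial, e.g.\ a wrapped Gaussian with small variance for which $|\theta_\ell(g^\star)| \lesssim e^{-c\ell^2}$; such a $g^\star$ lies in $\Mnu$ for every admissible $\nu$. Fix a reference shape $f^0 \in \cF_s$ with $\theta_1(f^0) > 0$. For parameters $\ell_n$ and $m_n$ to be tuned, and each $\omega \in \{0,1\}^{m_n}$, set
\begin{equation*}
f^\omega \; := \; f^0 \; + \; a_n \sum_{k=1}^{m_n} \omega_k \cos\bigl(2 \pi (\ell_n + k)\, x \bigr),
\end{equation*}
where the amplitude $a_n$ is tuned so that the Sobolev condition defining $\cF_s$ is uniformly respected, i.e.\ $a_n^2 m_n (\ell_n + m_n)^{2s} \lesssim 1$. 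A Varshamov--Gilbert extraction then produces at least $2^{m_n/8}$ codewords pairwise at squared $L^2$-distance $\gtrsim a_n^2 m_n$.

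The key step is a sharp per-sample Kullback--Leibler bound between $\PP_{f^\omega, g^\star}$ and $\PP_{f^{\omega'}, g^\star}$. Using the mixture representation~\eqref{eq:not_mixture} and Taylor-expanding the log-likelihood ratio to second order in $a_n$, one shows that a perturbation of $\theta_\ell(f^0)$ enters the observation density only through the Fourier coefficient $\theta_\ell(g^\star)$, so that per sample
\begin{equation*}
\mathrm{KL}\bigl(\PP_{f^\omega, g^\star} \big\| \PP_{f^{\omega'}, g^\star}\bigr) \; \lesssim \; |\omega - \omega'|_{1}\, a_n^2\, \bigl|\theta_{\ell_n}(g^\star)\bigr|^2.
\end{equation*}
Hence the cumulative KL over $n$ samples is $\lesssim n m_n a_n^2 e^{-2 c \ell_n^2}$. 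Fano's inequality, combined with the Varshamov--Gilbert distance bound, forces this to be of the order of $m_n$; balancing against the Sobolev amplitude constraint and choosing $\ell_n \sim \sqrt{(\log n)/(2c)}$ yields $a_n^2 m_n \gtrsim (\log n)^{-(2s+2)}$, the announced rate. The $g$-bound follows by exchange of roles: pick $f^0$ with analytic Fourier coefficients (still an element of $\cF_s$) and perturb $g$ at a block of frequencies of order $\sqrt{\log n}$; the Sobolev-$\nu$ norm constraint, together with the symmetric KL bound in $|\theta_{\ell_n}(f^0)|^2$, delivers the rate $(\log n)^{-(2\nu+1)}$.

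The main obstacle is the sharp per-sample KL estimate: a direct Cauchy--Schwarz argument between the two Gaussian mixtures loses the decisive factor $|\theta_{\ell_n}(g^\star)|^2$ and would only yield a polynomial minimax rate. The resolution is to expand the log-likelihood ratio carefully inside the mixture integral and exploit the orthogonality of trigonometric modes under $g^\star$, which precisely isolates the exponential suppression $e^{-2 c \ell_n^2}$ at frequency $\ell_n$. A secondary, more routine point is to verify that the Varshamov--Gilbert candidates stay uniformly in $\cF_s$ (resp.\ $\Mnu$) as $\omega$ varies; this only constrains the amplitude $a_n$ (resp.\ $b_n$) and requires no extra regularisation.
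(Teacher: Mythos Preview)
Your central KL estimate is false, and the argument cannot be repaired along the lines you sketch. You claim that a perturbation of $\theta_\ell(f)$ enters $\PP_{f,g^\star}$ only through $\theta_\ell(g^\star)$, so that a super-smooth $g^\star$ exponentially suppresses high-frequency changes in $f$. This is the standard deconvolution picture, but the Shape Invariant Model is \emph{not} a convolution in that sense. Recall that $\theta_\ell(Y)=\theta_\ell(f)e^{-\i 2\pi\ell\tau}+\xi_\ell$: the shift acts by rotating the phase, and the modulus $|\theta_\ell(f)|$ is the radius of the mixing circle. Even when $g^\star$ is \emph{exactly} uniform (so every $\theta_\ell(g^\star)=0$ for $\ell\neq0$), the marginal of $\theta_\ell(Y)$ is a rotationally invariant Gaussian mixture on the circle of radius $|\theta_\ell(f)|$, and two different radii give two different laws. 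Concretely, your hypotheses $f^\omega$ switch $\theta_{\ell_n+k}(f)$ between $0$ and $\sim a_n$, hence switch the marginal between $\cN_\C(0,1)$ and a circle-mixture of radius $a_n$; a direct expansion (or simply comparing second moments, since $\EE|\theta_\ell(Y)|^2=1+|\theta_\ell(f)|^2$) shows that the per-sample KL is of order $a_n^4$, \emph{not} $a_n^2|\theta_{\ell_n}(g^\star)|^2$. With the correct bound $n\cdot\mathrm{KL}\asymp n\,m_n a_n^4$, Fano only delivers a polynomial rate. The dual argument for $g$ has the symmetric problem: perturbing $c_\ell(g)$ does not enter the likelihood solely through $\theta_\ell(f)$, because the $\tau$-dependence in the Gaussian kernel mixes all frequencies.

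The paper's construction is conceptually different and avoids exactly this obstruction. It builds a net $(f_j,g_j)_{j=1,\ldots,p_n}$ in which the modulus $|\theta_{p_n}(f_j)|=p_n^{-s}$ is held fixed and only the \emph{phase} is rotated, $\theta_{p_n}(f_j)=p_n^{-s}e^{\i 2\pi(j-1)/p_n}$; then the $g_j$ are adjusted, via linear constraints on their Fourier coefficients, so as to compensate the phase rotation up to a high-order Taylor remainder. Because a phase rotation of $\theta_\ell(f)$ is equivalent to a shift of $\tau$, it \emph{can} be absorbed into $g$; a change of modulus cannot. The resulting bound $d_{TV}(\PP_{f_j,g_j},\PP_{f_1,g_1})\lesssim (n\log n)^{-2}$ comes from a Taylor expansion of the Gaussian kernel to order $p_n/4$, with $p_n\asymp\log n$, and Fano then gives $\|f_j-f_{j'}\|^2\gtrsim p_n^{-2s-2}\asymp(\log n)^{-2s-2}$; the same net also separates the $g_j$ and yields the second bound. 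In short, the logarithmic rate here stems from the near non-identifiability of $(f,g)$ under simultaneous phase rotation and shift compensation, not from any smoothing property of a fixed $g$.
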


\noindent
This result is far from being contradictory with the polynomial rate obtained in Theorem \ref{theo:posterior_shift}. One can make  at least three remarks:
\begin{itemize}
\item The first result provides a contraction rate on the probability distribution in $\mathcal{P}$ and not on the functional space $\cF_s$.
\item The link between $(f^0,g^0)$ and $\PP_{f^0,g^0}$ relies on the identifiability of the model, and the lower bound is derived from a net of functions $(f_i,g_i)_{i}$, which are really hard to identify according to the application $\mathcal{I}: (f,g) \mapsto \mathbb{P}_{f,g}$. On this net of functions, the injection is very ``flat'' and the two by two differences of $\mathcal{I}(f^i,g^i)$  are as small as possible and thus the pairs of functions $(f^i,g^i)$ become very hard to distinguish.
\item In fact, \cite{BG10} have shown that in the SIM, when $n \longrightarrow + \infty$, it is impossible to recover the unknown true shifts. The abrupt degradation between the polynomial rates on probability laws in $\cP$  and the logarithmic rates on functional objects in $\cF_s \times \Mnu$ also occurs owing to such a reason. 
One may argue that such an artefact could be avoided if one chooses a different distance on $\cF_s$, which may be better suited to our framework, such as
$$
d_{Frechet}(f_1,f_2) := \inf_{\tau \in [0,1]} \left\|f_1^{-\tau}-f_2\right\|.
$$ 
We do not have purchased further our investigations with this distance on $\cF_s$ but it would certainly be  a nice progress to obtain the posterior contraction using such a distance. We expect a polynomial rate, %for such distance, 
but it is clearly an open (and probably hard) task.
\end{itemize}

\section{Contraction Rate of the posterior distribution} \label{sec:prior}

We provide in this section a short proof of  Theorem \ref{theo:posterior_shift} since it is almost an extension of the result obtained in \cite{BG_1} for a more general class of mixture models.
We first recall a useful result established in \cite{BG_1} which links the total variation distance between $\PP_{f,g}$ and $\PP_{\tilde{f},g}$ and the norm of $f-\tilde{f}$ in $L^2$.
\begin{lemma} \label{lemma:dVT_sur_f}
 Let $f$ and $\tilde{f}$ be any functions in $L^2_\C([0, 1])$, $g$ be any shift distribution in $\M$, then
 \begin{equation*}%\label{eq:bound_tv_1}
  d_{TV}(\PP_{f,g},\PP_{\tilde{f},g}) \leq \frac{\|f-\tilde{f}\|}{\sqrt{2}}.
 \end{equation*}
\end{lemma}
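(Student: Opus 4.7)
The plan is to express the observed-law $\PP_{f,g}$ as a mixture of the pure-noise Gaussian experiments indexed by the shift $\tau$, then reduce the bound to the conditional (deterministic-shift) case, and finally apply Pinsker's inequality once the Kullback--Leibler divergence has been computed using the explicit Gaussian densities in the Fourier domain.

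Concretely, I would first write, conditionally on $\tau\sim g$,
\[
\PP_{f,g} \;=\; \int_0^1 \PP_{f,\delta_\tau}\, dg(\tau), \qquad \PP_{\tilde f,g} \;=\; \int_0^1 \PP_{\tilde f,\delta_\tau}\, dg(\tau),
\]
where $\PP_{f,\delta_\tau}$ denotes the law of the curve $dY(x)=f(x-\tau)\,dx+dW(x)$. Since both mixtures use the \emph{same} mixing distribution $g$, the joint convexity of the total variation distance in its two arguments yields
\[
d_{TV}(\PP_{f,g},\PP_{\tilde f,g}) \;\leq\; \int_0^1 d_{TV}(\PP_{f,\delta_\tau},\PP_{\tilde f,\delta_\tau})\, dg(\tau),
\]
so it is enough to control the deterministic-shift version uniformly in $\tau$.

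Second, for every fixed $\tau$, I would use the Fourier description \eqref{eq:model_fourier}: conditionally on the shift being $\tau$, the infinite vector $(\theta_\ell(Y))_\ell$ is a product of independent standard complex Gaussians with means $\theta_\ell(f^{-\tau})$ and $\theta_\ell(\tilde f^{-\tau})$, respectively. Using the normalization $\gamma(z)=\pi^{-1}e^{-|z|^2}$ fixed in the paper, the KL divergence between two such one-dimensional Gaussians with means $\mu_1,\mu_2$ equals $|\mu_1-\mu_2|^2$, and summing (by tensorization and Parseval) gives
\[
\mathrm{KL}(\PP_{f,\delta_\tau}\Vert \PP_{\tilde f,\delta_\tau}) \;=\; \sum_{\ell\in\Z}|\theta_\ell(f^{-\tau})-\theta_\ell(\tilde f^{-\tau})|^2 \;=\; \|f^{-\tau}-\tilde f^{-\tau}\|^2 \;=\; \|f-\tilde f\|^2,
\]
the last equality because the shift $h\mapsto h^{-\tau}$ is an $L^2$-isometry.

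Finally, Pinsker's inequality $d_{TV}\leq\sqrt{\mathrm{KL}/2}$ combined with the two previous displays gives
\[
d_{TV}(\PP_{f,g},\PP_{\tilde f,g}) \;\leq\; \int_0^1 \frac{\|f-\tilde f\|}{\sqrt{2}}\, dg(\tau) \;=\; \frac{\|f-\tilde f\|}{\sqrt{2}}.
\]
The only delicate point is bookkeeping the complex-Gaussian normalization (the variance convention $E|\xi|^2=1$ coming from $\gamma(z)=\pi^{-1}e^{-|z|^2}$), which is exactly what produces the constant $1/\sqrt{2}$ rather than $1/2$; everything else is a direct application of convexity of $d_{TV}$, the Plancherel identity and Pinsker. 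Tensorization over Fourier modes for the KL can be done either by a standard truncation--limit argument or by recognizing the Cameron--Martin formula in the white-noise model, which gives the same answer.
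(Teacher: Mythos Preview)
Your proof is correct. The paper itself does not prove this lemma here---it simply recalls it from the companion paper \cite{BG_1}---but your argument (mixture representation with common mixing law, joint convexity of $d_{TV}$, explicit KL computation for the shifted complex Gaussian in the Fourier domain, then Pinsker) is the standard route and matches exactly the skeleton the authors deploy in the proof of Proposition~\ref{prop:transport}, where the same convexity-over-mixtures step is combined with Lemma~\ref{lemma:dVT_sur_f}. Your handling of the constant via the normalization $\gamma(z)=\pi^{-1}e^{-|z|^2}$ (so that $\mathrm{KL}(\gamma_{\mu_1}\Vert\gamma_{\mu_2})=|\mu_1-\mu_2|^2$) is also right.
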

The next proposition is concerned by the closeness of two laws $\PP_{f,g}$ and $\PP_{f,\tilde{g}}$, when we keep the same shape $f \in \cH_1$. Consider the inverse functions of the distribution functions defined by
\[ \forall u \in [0, 1], \quad G^{-1}(u) = \inf \{t \in [0, 1] : g([0, t]) > u \}. \]
The Wasserstein (or Kantorovich) distance $W_1$ is given by
\[ W_1(g, \tilde{g}) := \int_0^1 \left| G^{-1}(t) - \tilde{G}^{-1}(u) \right| d t. \]
\begin{proposition}\label{prop:transport}
 Consider $f \in \cH_1$, and let $g$ and $\tilde{g}$ be any measures on $[0,1]$. Then
 \begin{align*}
  d_{TV}(\PP_{f,g},\PP_{f,\tilde{g}}) &\leq \sqrt{2} \pi \|f\|_{\cH_1} W_1(g, \tilde{g}) \\ &
  \leq \sqrt{2} \pi \|f\|_{\cH_1} d_{TV}(g, \tilde{g}) %\\ &
  \leq  \pi \|f\|_{\cH_1} \| g - \tilde{g} \|/\sqrt{2}.
 \end{align*}
\end{proposition}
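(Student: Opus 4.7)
\medskip
\noindent\textbf{Plan of proof.}
The strategy is to reduce the mixture distance to a pointwise (Dirac) comparison via a coupling argument, then invoke Lemma~\ref{lemma:dVT_sur_f} together with an elementary Fourier computation. From equation~\eqref{eq:not_mixture}, the law $\PP_{f,g}$ is the $g$-mixture of the Dirac-shift laws $\PP_{f,\delta_\varphi} = \PP_{f^{-\varphi},\delta_0}$. By convexity of total variation, for any coupling $(U,V)$ of $(g,\tilde g)$,
\[
 d_{TV}(\PP_{f,g},\PP_{f,\tilde g}) \;\leq\; \EE\bigl[d_{TV}(\PP_{f,\delta_U},\PP_{f,\delta_V})\bigr]
 \;=\; \EE\bigl[d_{TV}(\PP_{f^{-U},\delta_0},\PP_{f^{-V},\delta_0})\bigr].
\]

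Apply Lemma~\ref{lemma:dVT_sur_f} to the integrand to get $\EE[\|f^{-U}-f^{-V}\|]/\sqrt2$. The next step is the Fourier-side Lipschitz bound: since $\theta_\ell(f^{-\tau}) = e^{-2\i\pi\ell\tau}\theta_\ell(f)$, Parseval gives
\[
 \|f^{-\tau_1}-f^{-\tau_2}\|^2 \;=\; \sum_{\ell\in\Z} 4\sin^2\!\bigl(\pi\ell(\tau_1-\tau_2)\bigr)\,|\theta_\ell(f)|^2 \;\leq\; 4\pi^2 (\tau_1-\tau_2)^2\,\|f\|_{\cH_1}^2,
\]
using $|\sin x|\leq |x|$ and the definition of $\|\cdot\|_{\cH_1}$. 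Hence $\|f^{-U}-f^{-V}\|\leq 2\pi\|f\|_{\cH_1}\,|U-V|$, which plugged into the chain above yields $d_{TV}(\PP_{f,g},\PP_{f,\tilde g})\leq \sqrt2\,\pi\,\|f\|_{\cH_1}\,\EE|U-V|$. Taking the infimum over couplings produces exactly $W_1(g,\tilde g)$, which is the first claimed bound.

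For the remaining two inequalities there is nothing to prove beyond standard facts on $[0,1]$. Using the CDF representation $W_1(g,\tilde g)=\int_0^1|G(x)-\tilde G(x)|\,dx$ and the elementary estimate $|G(x)-\tilde G(x)|\leq d_{TV}(g,\tilde g)$ valid for every $x$, together with the unit length of the domain, we obtain $W_1(g,\tilde g)\leq d_{TV}(g,\tilde g)$. Finally, $d_{TV}(g,\tilde g)=\tfrac12\|g-\tilde g\|_{L^1([0,1])}\leq \tfrac12\|g-\tilde g\|_{L^2([0,1])}$ by Cauchy--Schwarz, yielding the last inequality.

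The only delicate point is the constant tracking in the trigonometric bound; once the Fourier convention $(\theta\bullet\varphi)_\ell = \theta_\ell e^{-2\i\pi\ell\varphi}$ is used consistently, the factor $2\pi$ in the Lipschitz estimate for $\tau\mapsto f^{-\tau}$ is forced, and the $\sqrt2$ then comes from Lemma~\ref{lemma:dVT_sur_f}. No further measurability issue arises since an optimal (or $\varepsilon$-optimal) coupling for $W_1$ on $[0,1]$ exists, e.g.\ the monotone rearrangement $(G^{-1}(U),\tilde G^{-1}(U))$ with $U$ uniform on $[0,1]$.
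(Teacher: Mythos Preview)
Your proof is correct and follows essentially the same route as the paper's: the paper uses the monotone rearrangement coupling $(G^{-1}(u),\tilde G^{-1}(u))$ directly, while you phrase it as a general coupling argument and then optimize, but this is the same idea, and your Fourier Lipschitz bound for $\tau\mapsto f^{-\tau}$ matches the paper's computation exactly. The only cosmetic difference is that the paper cites a reference for $W_1\leq d_{TV}$ whereas you supply the short CDF argument yourself.
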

The last two upper bounds are useful in our setting because we only consider distributions that admit regular densities.

\begin{proof}[Proof of Proposition \ref{prop:transport}]
 We use a change of variable, the convexity of $d_{TV}$, and Lemma \ref{lemma:dVT_sur_f} to get
 \begin{align*}
  d_{TV}(\PP_{f,g},\PP_{f,\tilde{g}}) &= \left\| \int_0^1 \PP_{f,\delta_\alpha} d g(\alpha) - \int_0^1 \PP_{f,\delta_\alpha} d \tilde{g}(\alpha) \right\|_{TV} \\
  &= \left\| \int_0^1 \left(\PP_{f,\delta_{G^{-1}(u)}} - \PP_{f,\delta_{\tilde{G}^{-1}(u)}} \right) d u \right\|_{TV} \\
  &\leq \int_0^1 d_{TV}\left(\PP_{f^{-G^{-1}(u)},\delta_0}, \PP_{f^{-\tilde{G}^{-1}(u)},\delta_0} \right) d u \\
  &\leq \frac{1}{\sqrt{2}} \int_0^1 \left\|f^{-G^{-1}(u)} - f^{-\tilde{G}^{-1}(u)} \right\| d u.
 \end{align*}
 Then
 \begin{align*}
  \left\|f^{-G^{-1}(u)} - f^{-\tilde{G}^{-1}(u)} \right\| &= \sqrt{\sum_{k\in\Z} |c_k(f)|^2 \left| e^{-\i 2 \pi k G^{-1}(u)} - e^{-\i 2 \pi k \tilde{G}^{-1}(u)} \right|^2 } \\
  &\leq 2 \pi \left| G^{-1}(u) - \tilde{G}^{-1}(u) \right| \sqrt{\sum_{k\in\Z} k^2 |c_k(f)|^2 }.
 \end{align*}
 Therefore we get the first inequality:
 \[ d_{TV}(\PP_{f,g},\PP_{f,\tilde{g}}) \leq \sqrt{2} \pi \|f\|_{\cH_1} \int_0^1 \left| G^{-1}(u) - \tilde{G}^{-1}(u) \right| d u. \]
 
 Now, the second inequality is a classical result: see for instance \cite[Theorem 4]{GS02}. The last inequality is well known too.
\end{proof}

\begin{proof}[Proof of Theorem  \ref{theo:posterior_shift}]
We mimic the proof of Theorem 2.2 of \cite{BG_1}.

\paragraph{Complementary of the sieve} 
 First, we consider the sieve over $\mathcal{P}$ defined as the set of all possible laws when $f$ has truncated Fourier coefficients and a restricted $L^2$ norm:
$$
\cP_{k_n,w_n} := \left\{\PP_{f,g} : (f ,g) \in \cF^{k_n} \times \mathfrak{M}_{\nu}([0,1])(2A),
 \|f\| \leq w_n\right\},
$$
where $k_n$ is a sequence such that $k_n \longmapsto + \infty$ as $n \longmapsto + \infty$, and $w_n^2= 4 k_n+2$.

Since our sieve is included in the set of all mixture laws, we can apply  Proposition 3.10 of \cite{BG_1} and get
$$
\Pi_n \left(\cP \setminus \cP_{k_n,w_n}\right)  \leq  e^{ 
- c [ k_n^2 \log^\rho(k_n) \wedge k_n \xikn^{-2}]}.
$$

\paragraph{Entropy estimates} Since $\mathfrak{M}_{\nu}([0,1])(2A) \subset \M$, our sieve is included in the sieve considered in \cite{BG_1}, we also deduce that for any sequence $\epsilon_n \longmapsto 0$:
$$
\log D\left( \epsilon_n, \cP_{k_n,w_n},d_H \right) \lesssim  k_n^2 \left[ \log k_n + \log \frac{1}{\epsilon_n} \right].
$$

\paragraph{Lower bound of the prior of Kullback neigbourhoods}
We use the description of Kullback neigbourhoods based on our preliminary results .  We define $\tilde{\epsilon}_n = c\epsilon_n \left( \log  \frac{1}{\epsilon_n}\right)^{-1}$, an integer
 $\ell_n$ such that $ \tilde{\epsilon}_n^{-1/s} \lesssim  \ell_n \lesssim \tilde{\epsilon}_n^{-1/s}$, and the sets
$$
\cF_{\tilde{\epsilon}_n} := \left\{ f \in \cH_s^{\ell_n} : \|f-f^0_{\ell_n}\| \leq \tilde{\epsilon}_n^2 \right\},
$$
and 
$$
\cG_{\tilde{\epsilon}_n} := \left\{ g \in \mathfrak{M}_{\nu}([0,1])(2A): d_{TV}(g,g^0) \leq \tilde{\epsilon}_n \right\}.
$$
We deduce from Lemma \ref{lemma:dVT_sur_f}, Proposition \ref{prop:transport} and arguments of Proposition 3.9 of \cite{BG_1} that as soon as $f \in \cF_{\tilde{\epsilon}_n}$ and $g \in \cG_{\tilde{\epsilon}_n}$, $\PP_{f,g}$ belongs to an $\epsilon_n$ Kullback neighbourhood of $\PP_{f^0,g^0}$.
From Proposition 3.9, we can use the following lower bound of the prior mass on $\cF_{\tilde{\epsilon}_n}$:
$$
\Pi_n \left( \cF_{\tilde{\epsilon}_n} \right) \geq  e^{-(c +o(1))\, \left[ \epsilon_n^{-2/s} \left(\log (1/\epsilon_n)\right)^{\rho+2/s} \vee \xikn^{-2} \right] }.
$$
According to Theorem \ref{theo:lower_bound_proba} given in the appendix, we know that 
$$
\Pi_n \left( \cG_{\tilde{\epsilon}_n} \right) \geq e^{-(c+o(1)) \tilde{\epsilon}_n^{-1/(k_\nu+1/2)}} \geq  e^{-(c+o(1)) \tilde{\epsilon}_n^{-\frac{1}{\nu}}}$$
since $k_{\nu}+1/2 \leq \nu$ (see also \cite{Li_Shao} for a very complete survey on the small ball probability estimation for Gaussian processes).

\paragraph{Contraction Rate}
We now find a suitable choice of $k_n$ and $\epsilon_n$ in order to satisfy Theorem 2.1 of \cite{GGvdW00}, \textit{i.e.}
$$
\Pi_n\left( \cG_{\tilde{\epsilon}_n} \right) \Pi_n\left( \cF_{\tilde{\epsilon}_n} \right) \geq e^{- C n \epsilon_n^2}
$$
$$
\log D\left( \epsilon_n, \cP_{k_n,w_n},d_H \right) \lesssim   n \epsilon_n^2
$$
$$
 \Pi_n \left(\cP \setminus \cP_{k_n,w_n}\right)  \leq e^{-(C+4)n \epsilon_n^2}.
$$
Following the arguments already developed in Theorem 2.2 of \cite{BG_1}, we can find $\gamma>0$ and $\kappa>0$ such that 
$$
\epsilon_n := n^{- \left[ \frac{\nu}{2\nu+1} \wedge \frac{s}{2s+2} \wedge \frac{3}{8}\right] } \log (n) ^{\kappa}, \qquad k_n = n^{\frac{1}{2} - \left[\frac{\nu}{2\nu+1} \wedge \frac{s}{2s+2} \wedge \frac{3}{8}\right]} \log(n)^{\gamma}.
$$
\end{proof}

\section{Identifiability and semiparametric results} \label{sec:identifiability}

In the Shape Invariant Model, an important issue is the identifiability of the model with respect to the unknown curve $f$ and the unknown mixture law $g$. We first discuss on a quite generic identifiability condition for $\PP_{f,g}$. Then, we deduce from Theorem \ref{theo:posterior_shift} a contraction rate of the posterior distribution around the true $f^0$ and $g^0$.

\subsection{Identifiability of the model}

In previous works on SIM, the identifiability of the model is generally given according to a restriction on the support of $g$. For instance, \cite{BG10} assume the support of $g$ to be an interval included in $[-1/4,1/4]$ (their shapes are defined on $[-1/2;1/2]$ instead of $[0,1]$ in our paper) and $g$ is assumed to have $0$ mean although $f$ is supposed to have a non vanishing first Fourier coefficient ($\theta_1(f) \neq 0$). The same kind of condition on the support of $g$ is also assumed in \cite{BG12}.

If the condition on the first harmonic on $f$ is imperative to obtain identifiability of $g$, the restriction on its support size seems artificial and we detail in the sequel how one can avoid such a hypothesis.
First, we recall that for any curve $Y$ sampled from the SIM, the first Fourier coefficient is given by $\theta_1(Y) = \theta^0_1 e^{-\i 2 \pi \tau} + \xi$ (here $\theta_1^0 = \theta_1(f^0)$). Hence, up to  a simple change of variable in $\tau$, we can always modify $g$ in $\tilde{g}$ such that 
$\theta_1^0 \in \R_{+}$. 
%$|\theta_1^0| e^{- \i 2 \pi \tau}+ \xi$ and $\theta^0_1 e^{-\i 2 \pi \tau} + \xi$ possess the same law. 
It is for instance sufficient to fix $\tilde{g}(\varphi) = g(\varphi+\alpha)$ where $\alpha$ is the complex argument of $\theta^0_1$. Hence, to impose such an identifiability condition, we have chosen to restrict $f$ to $\cF_s$. 
This condition is not restrictive up to a change of measure for the random variable $\tau$.  We now establish the proof of Theorem \ref{theo:ident}.

\begin{proof}[Proof of Theorem  \ref{theo:ident}]
The demonstration is decomposed using three hierarchical steps. First, we prove that if $ \PP_{f,g} = \PP_{\tilde{f},\tilde{g}}$, then one has necessarily $\theta_1(f) = \theta_1(\tilde{f})$. Then we deduce from this point that $g = \tilde{g}$ and at last we obtain the identifiability for all other Fourier coefficients of $f$.

Note that as soon as $\nu>1/2$, $g$ and $\tilde{g}$ admit densities w.r.t. the Lebesgue measure on $[0, 1]$. In the sequel we use the same notation $g$ to refer to the density of $g$.

\paragraph{Point 1: Identifiability on $\theta_0(f)$ and $\theta_1(f)$}
We denote $\PP^k_{f,g}$ the marginal law of $\PP_{f,g}$ on the $k^{th}$  Fourier coefficient when the curve follows the Shape Invariant Model (\ref{eq:model_fourier}). Of course, we have the following implications
$$
 d_{TV}(\PP_{f,g},\PP_{\tilde{f},\tilde{g}}) = 0 \Longrightarrow \left( \PP_{f,g} = \PP_{\tilde{f},\tilde{g}} \right) \Longrightarrow  \forall k \in \Z : d_{TV}\left(\PP^k_{f,g},\PP^k_{\tilde{f},\tilde{g}}\right) = 0.
$$
We immediately obtain that $\theta_0(f) = \theta_0(\tilde{f})$ since $\theta_0(f)$ (resp. $\theta_0(\tilde{f})$) represents the mean of the distribution $\PP^0_{f,g}$ (resp. $\PP^0_{\tilde{f},\tilde{g}}$). But note that the distribution $\PP^0_{f,g}$ does not bring any information on the measure $g$, and is not helpful for its identifiability.
Concerning now the first Fourier coefficient, we use the notation $\theta_1:=\theta_1(f)$, $\tilde{\theta}_1: = \theta_1(\tilde{f})$ and remark that
\begin{multline*}
 d_{TV}\left(\PP^1_{f,g},\PP^1_{\tilde{f},\tilde{g}}\right) \\ = \frac{1}{2\pi}
\int_{\C} \left|\int_{0}^1 e^{-|\theta_1 e^{\i 2 \pi \alpha} - z|^2} g(\alpha) d\alpha - \int_{0}^1 e^{- |\tilde{\theta}_1 e^{\i 2 \pi \alpha} - z|^2 } \tilde{g}(\alpha) d\alpha  \right| dz.
\end{multline*}
Assume now that $\tilde{\theta}_1 \neq \theta_1$, without loss of generality $\tilde{\theta}_1 > \theta_1 >0$ and consider the disk  $D_{\C}\left(0,\frac{\tilde{\theta}_1-\theta_1}{2}\right)$, we then get
$
\forall z \in D_{\C}\left(0,\frac{\tilde{\theta}_1-\theta_1}{2}\right)\,, \forall \alpha \in [0,1]:$
$$  |\theta_1 e^{i 2 \pi \alpha} - z | < \frac{\tilde{\theta}_1+\theta_1}{2} \, \text{and} \, |\tilde{\theta}_1 e^{i 2 \pi \alpha} - z | > \frac{\tilde{\theta}_1+\theta_1}{2} . $$
Hence, for all $z \in D_{\C}\left(0,\frac{\tilde{\theta}_1-\theta_1}{2}\right)$, we get 
$\int_{0}^1 e^{-|\theta_1 e^{\i 2 \pi \alpha} - z|^2} g(\alpha) d\alpha > e^{- \frac{\left|\tilde{\theta}_1+\theta_1\right|^2}{4} } $
and of course 
$\int_{0}^1 e^{-|\tilde{\theta}_1 e^{\i 2 \pi \alpha} - z|^2} \tilde{g}(\alpha) d\alpha < e^{- \frac{\left|\tilde{\theta}_1+\theta_1\right|^2}{4} }.
$
We can thus write the following lower bound of the Total Variation
%\footnote{It is indeed possible to write an explicit lower bound which will depend on $|\theta_1 - \tilde{\theta}_1|^2$, with a radius smaller than $\frac{\tilde{\theta}_1 - \theta_1}{2}$.}. %Hence, if $\tilde{\theta}_1 > \theta_1 >0$, we then obtain
\begin{multline*}
 d_{TV}\left(\PP^1_{f,g},\PP^1_{\tilde{f},\tilde{g}}\right) \geq \frac{1}{2\pi}\int_{D_{\C}\left(0,\frac{\tilde{\theta}_1-\theta_1}{2}\right)} \left|\int_{0}^1 e^{-|\theta_1 e^{\i 2 \pi \alpha} - z|^2} g(\alpha) d\alpha \right. \\ 
 - \left. \int_{0}^1 e^{- |\tilde{\theta}_1 e^{\i 2 \pi \alpha} - z|^2 } \tilde{g}(\alpha) d\alpha\right|dz > 0.
\end{multline*}
In the opposite,  $d_{TV}(\PP^1_{f,g},\PP^1_{\tilde{f},\tilde{g}})  = 0$ implies that $\theta_1=\tilde{\theta}_1$ since $f$ and $\tilde{f}$ belong to $\cF_s(A)$.

\paragraph{Point 2: Identifiability on $g$}
We still assume that  $d_{TV}(\PP^1_{f,g},\PP^1_{\tilde{f},\tilde{g}})  = 0$. We know that  $\theta_1=\tilde{\theta}_1$ and we want to infer that $g= \tilde{g}$. We are going to establish this result using only the first harmonic of the curves.
Using a polar change of variables $z=\rho e^{\i \varphi}$, we can write that
\begin{multline*}
 d_{TV}\left(\PP^1_{f,g},\PP^1_{\tilde{f},\tilde{g}}\right) \\
 \begin{aligned}
  &= \frac{1}{2\pi} \int_{\C} e^{-[\theta_1^2 + |z|^2]} \left| \int_{0}^1 e^{2 \re (z \theta_1 e^{\i 2 \pi \alpha})} (g(\alpha)-\tilde{g}(\alpha) d\alpha \right| dz \\
 & = \frac{1}{4 \pi^2}\int_{0}^{+ \infty} \rho e^{-[\theta_1^2 + \rho^2]} \int_{0}^{2 \pi} \left| \int_{0}^{2 \pi} e^{2 \rho \theta_1 \cos(u - \varphi)}  (g-\tilde{g})(u/2\pi) du\right| d\varphi d\rho \\
 & = \frac{1}{4 \pi^2}\int_{0}^{+ \infty} \rho e^{- [\theta_1^2 + \rho^2]} \int_{0}^{2 \pi} \left| \int_{0}^{2 \pi} e^{2 \rho \theta_1 \cos(u )}  (g-\tilde{g})\left(\frac{u + \varphi}{2\pi}\right) d\alpha\right| d\varphi d\rho  \\
 & =   \frac{1}{4 \pi^2}\int_{0}^{+ \infty} \rho e^{- [\theta_1^2 + \rho^2]} \int_{0}^{2 \pi} \left| \psi_{2 \rho \theta_1}(\varphi) \right| d \varphi d\rho.
 \end{aligned}
\end{multline*}
 In the expression above, we denote $h = g-\tilde{g}$ and $\psi_{a}(\varphi)$ is defined as  
  $$
  \psi_{a}(\varphi) = \int_{0}^{2 \pi} e^{a  \cos (u)} h\left(\frac{u + \varphi}{2\pi}\right) d u.
  $$
  Of course, $\psi_a$ is upper bounded by $4 \pi e ^a$, and a very rough inequality yields
  %\footnote{Such an inequality is not very sharp and we can instead use an argument based on the Laplace transform of $g$ and $\tilde{g}$. The main advantage of such inequality is to handle $L^2$ norms instead of $L^1$ ones.}
  $|    \psi_{a}(\varphi) |\geq \frac{|\psi_{a}(\varphi) |^2}{4 \pi e^a}$. Hence, 
  \begin{equation}\label{eq:dvt_g_psi}
  d_{TV}(\PP^1_{f,g},\PP^1_{\tilde{f},\tilde{g}}) \geq \frac{1}{8\pi^2} \int_{0}^{+ \infty} \rho e^{- (\theta_1^2 + \rho^2 + 2 \theta_1 \rho)} \|\psi_{2 \rho \theta_1}\|^2 d \rho.
  \end{equation}
Using the fact that $\nu > 1$, $h$ may be expanded in Fourier series since  $h \in \cL^2([0,1])$:
 $$
 h(x) = \sum_{n \in \Z} c_n(h) e^{i 2 \pi n x},
  $$
and we can also obtain the Fourier decomposition of  $\psi_a$:
  $$
  \psi_a(\varphi)= \sum_{n \in \Z} c_n(h) \int_{0}^{2 \pi} e^{a \cos(u)} e^{i n u} du\, e^{i 2 \pi n \varphi}.
  $$
Thus, the $L^2$ norm of $\psi_a$ is given by
  \begin{equation}\label{eq:norm2psi}
  \|\psi_{a}\|^2 = \sum_{n \in \Z} |c_n(h)|^2 \left| \int_{0}^{2 \pi} e^{a \cos(u)} e^{\i nu} du\right|^2.
  \end{equation}
 Now, if we denote the first and second kind of Tchebychev polynomials $(T_n)_{n \in \Z}$ and $(U_n)_{n \in \Z}$ which satisfy $T_n(\cos \theta) = \cos (n \theta)$ and $(\sin \theta) U_n(\cos \theta) = \sin (n \theta)$, we can decompose
 \begin{multline*}
 \int_{0}^{2 \pi} e^{a \cos(u)} e^{\i nu} du \\
 \begin{aligned}
  &= \int_{0}^{2 \pi} e^{a \cos(u)} \left[ T_n(\cos u) + \i (\sin u) U_n(\cos u)\right] d u \\
  & = \int_{0}^{2 \pi} \sum_{k \geq 0} \frac{a^k (\cos u)^k}{k!} \left[ T_n(\cos u) + \i (\sin u) \sum_{j=0}^{n} \beta_j (\cos u)^j\right] d u
 \end{aligned}
\end{multline*}
where we have used the analytic expression of $U_n$ given by
$$U_n(\cos u) = \sum_{j=0}^{E((n-1)/2)} (-1)^j C_{n}^{2j+1} (\cos u)^{n-2j-1} (1 - \cos^2 u)^j.$$
 Hence, we obtain
\begin{align*}
 \int_{0}^{2 \pi} e^{a \cos(u)} e^{\i nu} du &=  \int_{0}^{2 \pi}  \sum_{k \geq 0} \frac{a^k (\cos u)^k}{k!}  T_n(\cos u)  d u  \\ &\quad +  \i \sum_{k \geq 0}\sum_{j=0}^n \beta_j \frac{a^k}{k!}  \int_{0}^{2 \pi}   \sin u (\cos u)^{k+j} d u \\
 & =  \int_{0}^{2 \pi}  \sum_{k \geq 0} \frac{a^k (\cos u)^k}{k!}  T_n(\cos u)  d u \\
 &  =  \int_{0}^{2 \pi} e^{a \cos(u)}  \cos (n u) d u \in \R \quad \text{if}\quad a\in\R.
 \end{align*}
We denote $A_n$ the following (holomorphic) function of the variable $a$ as
$$
  A_n(a):= \int_{0}^{2 \pi} e^{a \cos(u)} \cos(n u) du,$$
  and equation (\ref{eq:norm2psi}) yields
  \begin{equation}\label{eq:psi2normexplicit}
  \|\psi_a\|^2 = \sum_{n \in \Z} |c_n(h)|^2 A_n(a)^2.
  \end{equation}
Moreover, for each $n$,  $A_n$ is not the null function, otherwise it would be the case for each of its derivative but remark that $(\cos u)^n$ may be decomposed in the basis $(T_k)$ and using successive derivations
\begin{align*}
 A_n^{(n)}(0) &=\frac{d^{(n)}}{da^{(n)}} \left[ \sum_{k=0}^\infty \frac{a^k}{k!} \int_0^{2\pi} (\cos u)^k \cos(n u) \, du \right] (0) \\
 &=\int_{0}^{2 \pi} (\cos u)^n T_n(\cos u) d u \\
 & =  \int_{0}^{2 \pi} \left[ \sum_{k=0}^{n-1} \alpha_k T_k(\cos u) + 2^{1-n} T_n(\cos u) \right] T_n(\cos u) d u \\
 &= 2^{1-n} \pi >0.
\end{align*}
Note that in the meantime, we also obtain that $A_n^{(j)}(0)=0, \forall j < n$, so that
\begin{equation}\label{eq:Ansize}
A_n(a) \sim_{a \mapsto 0} \frac{2^{1-n} \pi}{n!} a^n.
\end{equation}
We can conclude the proof of the identifiability of $g$ using \eqref{eq:psi2normexplicit} in \eqref{eq:dvt_g_psi} to obtain
$$
  d_{TV}(\PP^1_{f,g},\PP^1_{\tilde{f},\tilde{g}}) \geq \frac{1}{8\pi^2} 
 \sum_{n \in \Z} |c_n(h)|^2  \underbrace{\left(\int_{0}^{+ \infty} \rho e^{- [\theta_1+\rho]^2} A_n(2 \rho \theta_1)^2 d \rho\right)}_{:=I_n(\theta_1)}.
  $$
  From  \eqref{eq:Ansize}, we can deduce that each integral $I_n(\theta_1) \neq 0, \forall n \in \Z$ and we then conclude that: 
  $$ d_{TV}(\PP^1_{f,g},\PP^1_{\tilde{f},\tilde{g}})  \Longleftrightarrow g=\tilde{g} \qquad \text{et} \qquad \theta_1=\tilde{\theta}_1.$$
  
  \paragraph{Point 3: Identifiability on $f$} 
  We end the argument and prove that $\PP_{f,g}  = \PP_{\tilde{f},\tilde{g}}$ implies $f=\tilde{f}$. We already know that $g=\tilde{g}$ and it remains to establish the equality for all the Fourier coefficients whose frequency is different from $0$ and $1$. By a similar argument as the one used for the identifiability of $\theta_1$ (Point 1), we can easily show that 
$$   
d_{TV}(\PP^k_{f,g},\PP^k_{\tilde{f},\tilde{g}})=0 \Longrightarrow |\theta_k| = |\tilde{\theta}_k|.
$$
But we cannot directly conclude here since it is not reasonable to restrict the phase of each others coefficients $\theta_k(f)$ to a special value (as it is the case for $\theta_1(f)$ which is positive). 
We assume that  $\tilde{\theta}_k= \theta_k e^{\i \varphi}$.  Since $g=\tilde{g}$, we have
$$   
d_{TV}(\PP^k_{f,g},\PP^k_{\tilde{f},g})=\frac{1}{2\pi} \int_{\C} \underbraceabs{\int_{0}^{2\pi} e^{- | z - \theta_k e^{- \i k \alpha} |^2} - e^{-| z - \theta_k e^{\i (\varphi- k\alpha)}|^2} g(\alpha) d\alpha }{:=F(z)} d z.$$
%%FIXME Holomorphe done
Now, if one considers $z=x+\i y$, $F$ is differentiable with respect to $x$ and $y$ and $F(0)=0$. A simple computation of $\nabla F(0)$ shows that $\nabla F(0)$ is the  vector (written in the complex form)
$$
\nabla F(0) = \theta_k e^{- |\theta_k|^2} c_k(g) [1-e^{\i \varphi}].
$$
Since $g \in \M^{\star}$,  this last term is non vanishing except if $\theta_k=0$ (which trivially implies that $\tilde{\theta}_k=0=\theta_k$) or if $\varphi\equiv 0 (2\pi)$. In both cases, $F'(0) = 0 \Longleftrightarrow  \tilde{\theta}_k = \theta_k$. Thus, as soon as
$\theta_k \neq \tilde{\theta}_k$, we have $\nabla F(0) \neq 0$ and we may find a neighbourhood of $0$ denoted $B(0,r)$ such that $|F|(z) >0$ when $z \in B(0,r)\setminus \{0\}$ . 
  This is sufficient to end the proof of identifiability.
\end{proof}

%\vspace{1cm}

In a sense, the main difficulty of the proof above is the implication of $d_{TV}(\PP^1_{f,g},\PP^1_{\tilde{f},\tilde{g}})  \Longrightarrow g=\tilde{g}$. Then, the identifiability follows using a chaining argument $\theta_1(f) \rightarrow g \rightarrow \theta_k(f), \forall k \notin \{0,1\}$. We will see that this part of the proof can also be used to obtain a contraction rate for $f$ and $g$ around $f^0$ and $g^0$. We recall here the main inequality used above: $\forall \theta_1>0$ and $ \forall (g,\tilde{g}) \in \Mnu$, the identifiability on $g$ is traduced by

\begin{equation}\label{eq:main_g}
d_{TV} \left(\PP^1_{\theta_1,g},\PP^1_{\theta_1,\tilde{g}} \right) 
\geq \frac{1}{8 \pi^2} 
\sum_{n\in\Z} |c_n(g-\tilde{g})|^2 
\left( \int_{0}^{\infty} \rho e^{-(\rho+\theta_1)^2} A_n(2 \rho \theta_1)^2 d \rho \right)
\end{equation}
The aim of the next paragraph is to exploit this inequality to produce a contraction rate of $g$ aroung $g^0$.

\subsection[Posterior contraction rate around f and g]{Contraction rate of the posterior distribution around $f^0$ and $g^0$}

\subsubsection{Link with deconvolution with unknown variance operator}
We  provide in this section an upper bound on the contraction rate of the posterior law around $f^0$ and $g^0$. This question is somewhat natural owing to the identifiability result obtained in the previous section. We thus assume for the rest of the paper that $f \in \cF_s$ and $g \in \Mnu$ for some parameters $s \geq  1$ and $\nu > 1$.

Remark first that our problem written in the Fourier domain seems strongly related to the standard deconvolution with unknown variance setting. For instance, the first observable Fourier coefficients are
$$
\theta_1(Y_j) = \theta_1 e^{-\i 2 \pi \tau_j} + \epsilon_{1,j}, \forall j \in \{1 \ldots n\}
$$
and up to a division by $\theta_1$, it can also be parametrised as
\begin{equation}\label{eq:deconvolution_unknown}
\tilde{\theta}_1(Y_j) = e^{-\i 2 \pi \tau_j} + \frac{\epsilon_{1,j}}{\theta_1}, \forall j \in \{1 \ldots n\},
\end{equation}
which is very similar to the problem $Y=X+\epsilon$ studied for instance by \cite{M02} where $\epsilon$ follows a Gaussian law whose variance (here $1/\theta_1^2$) is unknown. As pointed in \cite{M02} (see also the more recent work \cite{BM05} where similar situations are extensively detailed), such a particular setting is rather unfavourable for statistical estimation since  convergence rates are generally of logarithmic order. Such a phenomenon also occurs in our setting, except for the first Fourier coefficient of $f$ as pointed in the next proposition.

The roadmap of this paragraph is similar to the proof of Theorem \ref{theo:ident}. We first provide a simple lower bound of $d_{TV}$ which enables to conclude for the first Fourier coefficient. Then, we still use the first marginal to compute a contraction rate for the posterior distribution on $g$ around $g^0$. At last, we chain all these results to provide a contraction rate for the posterior distribution on $f$ around $f^0$.

\subsubsection{Contraction rate on the first Fourier coefficient}

{\sloppy
\begin{proposition}\label{prop:theta1}
Assume that $(f,g) \in \cF_s \times \Mnu$, then the posterior distribution satisfies
$$
\Pi_n \left( \left.\theta_1 \in B\left(\theta_1^0,M \epsilon_n^{1/3}\right)^c \right\vert Y_1, \ldots, Y_n \right) \mapsto 0
$$ in $\PP_{f^0,g^0}$ probability as $n \rightarrow + \infty$ for a sufficiently large $M$. The contraction rate around the true Fourier coefficient is thus at least $n^{-1/3 \times [\nu/(2\nu+1) \wedge s /(2s+2) \wedge 3/8]} (\log n)^{1/3}$.
\end{proposition}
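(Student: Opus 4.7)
The plan is to combine the Hellinger contraction rate of Theorem~\ref{theo:posterior_shift} with a quantitative sharpening of the first step of the identifiability argument of Theorem~\ref{theo:ident}. Specifically, I want to establish a local lower bound of the form
$$
d_H\bigl(\PP_{f,g}, \PP_{f^0,g^0}\bigr) \;\geq\; c \, |\theta_1(f) - \theta_1^0|^3,
$$
valid uniformly over the pairs $(f,g)$ that carry posterior mass (i.e.\ essentially on the sieve constructed in the proof of Theorem~\ref{theo:posterior_shift}). Once this is in hand, the inclusion
$$
\bigl\{|\theta_1 - \theta_1^0| > M' \epsilon_n^{1/3}\bigr\} \;\subseteq\; \bigl\{d_H(\PP_{f,g},\PP_{f^0,g^0}) > c\,M'^3 \epsilon_n\bigr\}
$$
transports the Hellinger concentration of Theorem~\ref{theo:posterior_shift} to the statement, upon choosing $M'$ large enough so that $cM'^3$ dominates the constant $M$ provided by that theorem.

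\textbf{Deriving the cubic lower bound.} Since $d_H \geq d_{TV}/\sqrt{2}$ and since the first Fourier marginal $\PP^1_{f,g}$ is a push-forward of $\PP_{f,g}$ (so $d_H(\PP^1_{f,g},\PP^1_{f^0,g^0}) \leq d_H(\PP_{f,g},\PP_{f^0,g^0})$), it is enough to lower-bound $d_{TV}(\PP^1_{f,g},\PP^1_{f^0,g^0})$ by $c\delta^3$, where $\delta := |\theta_1 - \theta_1^0|$. Assuming without loss of generality $\theta_1^0 > \theta_1 > 0$ (both are positive since $f, f^0 \in \cF_s$), I would replay the disk computation of Point~1 of the proof of Theorem~\ref{theo:ident}, but on the shrunken disk $D_\C(0, \delta/4)$. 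On that disk the elementary bounds
$$
|\theta_1 e^{\i 2\pi\alpha} - z| \leq \theta_1 + \delta/4, \qquad |\theta_1^0 e^{\i 2\pi\alpha} - z| \geq \theta_1^0 - \delta/4
$$
produce a squared-distance gap $|\theta_1^0 e^{\i 2\pi\alpha}-z|^2 - |\theta_1 e^{\i 2\pi\alpha}-z|^2 \geq \delta(\theta_1^0+\theta_1)/2$, uniformly in $\alpha \in [0,1]$ and in $z$ on the disk. Consequently, the integrand appearing in the expression of $d_{TV}(\PP^1_{f,g},\PP^1_{f^0,g^0})$ can be lower bounded pointwise on that disk by $e^{-(\theta_1+\delta/4)^2}(1 - e^{-\delta(\theta_1+\theta_1^0)/2})$, which is of order $\delta$ for small $\delta$. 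Integrating this pointwise estimate over a region of area $\pi \delta^2/16$ yields the desired $c\delta^3$ bound.

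\textbf{Main obstacle.} The delicate aspect is making the constant $c$ genuinely uniform over the parameters of interest. This is handled by restricting attention to the sieve $\cP_{k_n,w_n}$ from the proof of Theorem~\ref{theo:posterior_shift}, on which $\|f\|$, and hence $|\theta_1(f)|$, is uniformly bounded by $w_n$; the constant $c$ then depends only on $\theta_1^0$ and on an a priori bound for $\theta_1$. Since the posterior mass outside the sieve is exponentially small, the argument extends from the sieve to the whole parameter space. A benign subsidiary check is that the elementary inequality $1 - e^{-c\delta} \geq c\delta/2$ holds in the relevant regime, which is fine because we only need the bound for $\delta \leq M' \epsilon_n^{1/3} \to 0$.
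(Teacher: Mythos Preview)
Your approach is essentially the paper's own: restrict to the first Fourier marginal, rerun the disk computation from Point~1 of Theorem~\ref{theo:ident} on $D_\C(0,\delta/4)$ to obtain $d_{TV}(\PP^1_{f,g},\PP^1_{f^0,g^0})\geq C\delta^3$, and then transport the Hellinger contraction of Theorem~\ref{theo:posterior_shift} through $d_H\geq d_{TV}$.

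The one place where you diverge, and where there is a small gap, is the uniformity of the constant. Your ``without loss of generality $\theta_1^0>\theta_1$'' quietly handles one side (then $\theta_1$ and $\delta$ are both $<\theta_1^0$, so $C$ depends on $\theta_1^0$ alone), but in the symmetric case $\theta_1>\theta_1^0$ the exponential factor becomes $e^{-(\theta_1^0+\delta/4)^2}$ with $\delta$ potentially unbounded. Your proposed fix via the sieve does not help: on $\cP_{k_n,w_n}$ one only has $|\theta_1|\leq w_n$ with $w_n\to\infty$, so the resulting $c=c(w_n)$ may collapse faster than any polynomial, destroying the inclusion. The paper instead restricts from the outset to $|\theta_1-\theta_1^0|<\theta_1^0/2$, which pins $\theta_1$ in a fixed interval and makes the constant depend on $\theta_1^0$ only; the region $|\theta_1-\theta_1^0|\geq\theta_1^0/2$ is then handled by the trivial observation that $d_{TV}$ is bounded below by a fixed positive constant there, which dominates $C\eta^3$ for $\eta=M'\epsilon_n^{1/3}\to 0$. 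Finally, your last sentence has the inequality reversed: the lower bound is needed on $\{\delta>M'\epsilon_n^{1/3}\}$, not $\{\delta\leq M'\epsilon_n^{1/3}\}$; this is harmless once you localise to $\delta<\theta_1^0/2$ as above.
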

\fussy}

\begin{proof}
The demonstration is quite simple. Remark that using the beginning of the proof of Theorem \ref{theo:ident}, one can show that for any $\theta_1$ such that $0<\eta<|\theta_1-\theta_1^0| <\theta_1^0/2$, one can bound, for any $g \in \Mnu$, the Total Variation distance between $\PP_{f,g}$ and $\PP_{f^0,g^0}$. Remark that
$$
d_{TV}\left(\PP_{f,g},\PP_{f^0,g^0} \right) \geq d_{TV}\left(\PP^1_{f,g},\PP^1_{f^0,g^0} \right),
$$
owing to the restriction of $\PP_{f,g}$ to the first Fourier marginal and the variational definition of the Total Variation distance. Then %For sake of simplicity, assume that $\theta_1 > \theta_1^0$, we get
\begin{multline*}
 d_{TV}\left(\PP^1_{f,g},\PP^1_{f^0,g^0} \right) \\
 \begin{aligned}
  &\geq \frac{1}{2\pi} \int_{B\left(0,\frac{|\theta_1-\theta_1^0|}{4}\right)} \left|\int_{0}^1 g(\alpha) e^{-|z-\theta_1e^{\i 2 \pi \varphi}|^2} -  g^0(\alpha) e^{-|z-\theta^0_1e^{\i 2 \pi \varphi}|^2} d \varphi \right| d z \\
  & \geq \frac{\eta^2}{32}\left| e^{-(3 \theta_1^0+\theta_1)^2/16} - e^{-(3 \theta_1+\theta^0_1)^2/16}\right| \geq C(\theta_1^0) \eta^3,
 \end{aligned}
\end{multline*}
for a suitable small enough constant $C(\theta_1^0)$. Now, one can use simple inclusions and Pinsker inequality 
\begin{multline*}
 \left\{ \theta_1 \in B(0,\eta)^c \right\} \subset \left\{ \theta_1 \vert d_{TV}(\PP_{f,g},\PP_{f^0,g^0}) \geq C(\theta_1^0) \eta^3 \right\} \\ \subset 
\left\{ \theta_1 \vert d_{H}(\PP_{f,g},\PP_{f^0,g^0}) \geq C(\theta_1^0) \eta^3 \right\}.
\end{multline*}
The proof is now achieved according to Theorem \ref{theo:posterior_shift}.
\end{proof}

\subsubsection[Posterior contraction rate around g0]{Posterior contraction rate around $g^0$}

We now study the contraction rate of the posterior distribution around the true mixture law $g^0$. This result is stated below.

\begin{theo}\label{theo:contraction_g}
Assume $(f^0,g^0)\in \cF_s \times \Mnu$, then
$$
\Pi_n\left( \left. g : \|g-g^0\|^2 > M \log^{- 2 \nu}(n) \right\vert Y_1, \ldots, Y_n \right) \longrightarrow 0
$$ in $\PP_{f^0,g^0}$ probability as $n \rightarrow + \infty$ for a sufficiently large $M$.
\end{theo}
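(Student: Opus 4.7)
The plan is to combine the posterior contraction on laws (Theorem \ref{theo:posterior_shift}) and on the first Fourier coefficient (Proposition \ref{prop:theta1}) with the spectral inequality \eqref{eq:main_g} extracted from the identifiability proof, in order to exhibit a deterministic inclusion of the bad event $\{\|g-g^0\|^2>M(\log n)^{-2\nu}\}$ into a union of events already known to carry vanishing posterior mass.

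\textbf{Reduction to the first marginal with matched $\theta_1$.} Using $d_{TV}\leq \sqrt 2\,d_H$ and the fact that projecting onto the first Fourier coordinate cannot increase total variation, Theorem \ref{theo:posterior_shift} gives $d_{TV}(\PP^1_{\theta_1,g},\PP^1_{\theta_1^0,g^0})\lesssim \epsilon_n$ on a posterior event of probability $1-o(1)$. To apply \eqref{eq:main_g}, which forces both laws to share the same $\theta_1$, I insert $\PP^1_{\theta_1^0,g}$ via triangle inequality; the discrepancy $d_{TV}(\PP^1_{\theta_1,g},\PP^1_{\theta_1^0,g})$ is controlled by convexity of $d_{TV}$ over the mixture representation \eqref{eq:not_mixture} and the elementary bound $d_{TV}(\gamma_a,\gamma_b)\leq |a-b|/\sqrt 2$, yielding $d_{TV}(\PP^1_{\theta_1,g},\PP^1_{\theta_1^0,g})\leq |\theta_1-\theta_1^0|/\sqrt 2$. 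Combined with Proposition \ref{prop:theta1} ($|\theta_1-\theta_1^0|\lesssim \epsilon_n^{1/3}$), I obtain on a posterior event of probability $1-o(1)$,
\[
d_{TV}\bigl(\PP^1_{\theta_1^0,g},\PP^1_{\theta_1^0,g^0}\bigr)\lesssim \epsilon_n^{1/3}.
\]

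\textbf{Spectral bound and Sobolev truncation.} Inequality \eqref{eq:main_g} applied at $\tilde g = g^0$ and $\theta_1 = \theta_1^0$ then gives
\[
\sum_{k\in\Z}|c_k(g-g^0)|^2\, I_k(\theta_1^0)\lesssim \epsilon_n^{1/3},\qquad I_k(\theta_1^0):=\int_0^{\infty}\rho\, e^{-(\rho+\theta_1^0)^2} A_k(2\rho\theta_1^0)^2\, d\rho.
\]
For any integer $N$, the Sobolev constraint $g,g^0\in\Mnu$ yields $\sum_{|k|>N}|c_k(g-g^0)|^2\leq 4A^2 N^{-2\nu}$ by Parseval, while the weighted inequality above gives $\sum_{|k|\leq N}|c_k(g-g^0)|^2\leq \bigl(\min_{|k|\leq N} I_k(\theta_1^0)\bigr)^{-1}\epsilon_n^{1/3}$. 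From \eqref{eq:Ansize} and a standard integral estimate one obtains $I_k(\theta_1^0)\geq c_0 (\theta_1^0)^{2k+2}/(k!)^2$ for some $c_0>0$, so $\min_{|k|\leq N} I_k(\theta_1^0)^{-1}$ grows at least like $e^{cN\log N}$.

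\textbf{Balance and main obstacle.} Equating $e^{cN\log N}\epsilon_n^{1/3}$ with $N^{-2\nu}$ yields $N\asymp \log(1/\epsilon_n)/\log\log(1/\epsilon_n)\asymp \log n/\log\log n$, so that both the head and the tail of $\|g-g^0\|^2$ are of order $(\log n)^{-2\nu}$, up to a $(\log\log n)^{2\nu}$ factor absorbed in $M$. The bad event is thus contained in the union of $\{d_H(\PP_{f,g},\PP_{f^0,g^0})>M\epsilon_n\}$ and $\{|\theta_1-\theta_1^0|>M'\epsilon_n^{1/3}\}$, both of posterior probability $o(1)$. The main obstacle is the explicit lower bound on $I_k(\theta_1^0)$ for large $k$: it encodes the ill-posedness of the deconvolution with unknown scale (cf.\ \eqref{eq:deconvolution_unknown}) and is precisely what collapses the polynomial rate on $\cP$ to a logarithmic rate on $g$. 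The $\epsilon_n^{1/3}$ loss incurred when removing $\theta_1$ is immaterial, since only $\log(1/\epsilon_n)\asymp \log n$ enters the balance.
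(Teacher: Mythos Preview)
Your proposal is correct and follows exactly the paper's route: reduce to the first Fourier marginal with $\theta_1$ replaced by $\theta_1^0$ via Proposition~\ref{prop:theta1} and Lemma~\ref{lemma:dVT_sur_f}, apply the spectral inequality~\eqref{eq:main_g}, lower-bound the weights $I_k(\theta_1^0)$, and balance against the Sobolev tail $\sum_{|k|>N}|c_k(g-g^0)|^2\lesssim N^{-2\nu}$; the only technical difference is that the paper integrates $A_k(2\rho\theta_1^0)^2$ up to $\rho\asymp\sqrt{k}$ using the Bessel asymptotics of Lemma~\ref{lemma:equi_bessel} to obtain the sharper $I_k\gtrsim e^{-k\log k}$, whereas your use of~\eqref{eq:Ansize} on a bounded $\rho$-interval gives only $I_k\gtrsim (k!)^{-2}\asymp e^{-2k\log k}$, which changes the constant in front of $N\log N$ but not the final logarithmic order. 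One caveat: the $(\log\log n)^{2\nu}$ factor you flag cannot be ``absorbed in $M$'' since it diverges, and the paper's own balance $[k_n+2\nu]\log k_n=\tfrac13\log(1/\epsilon_n)$ produces the same extra factor, so the rate delivered by this argument is really $(\log n)^{-2\nu}(\log\log n)^{2\nu}$ rather than the clean $(\log n)^{-2\nu}$ stated.
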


\begin{proof}
We first restrict ourselves to the first marginal on Fourier coefficient as before. Using Theorem 2.2, we know that
$$
\Pi_n \left\{ \PP_{f,g} \quad s.t.\,  d_H(\PP_{f,g} ,\PP_{f^0,g^0}) \geq M \epsilon_n \vert Y_1, \ldots Y_n \right\} \longrightarrow 0 \quad \text{as}  \quad  n \rightarrow + \infty. 
$$
Since $$d_{TV}(\PP^1_{\theta_1,g} ,\PP^1_{\theta_1^0,g^0}) = d_{TV}(\PP^1_{f,g} ,\PP^1_{f^0,g^0})\leq d_{TV}(\PP_{f,g} ,\PP_{f^0,g^0})\leq d_H(\PP_{f,g} ,\PP_{f^0,g^0}),$$ we then get 
\begin{equation}\label{eq:tronc0}
\Pi_n \left\{ \PP_{f,g} \quad s.t. \, d_{TV}(\PP^1_{\theta_1,g} ,\PP^1_{\theta_1^0,g^0}) \geq M \epsilon_n \vert Y_1, \ldots Y_n \right\} \longrightarrow 0 \quad \text{as} \quad n \rightarrow + \infty. \end{equation}

For any $g \in \Mnu$, the triangular inequality yields
\begin{equation}\label{eq:tronc1}
 d_{TV}\left(\PP^1_{\theta^0_1,g},\PP^1_{\theta_1,g}\right) + 
d_{TV} \left(\PP^1_{\theta_1,g},\PP^1_{\theta_1^0,g^0}\right) \geq d_{TV} \left(\PP^1_{\theta^0_1,g},\PP^1_{\theta_1^0,g^0}\right) .
\end{equation}

Now, let $\tilde{f}$ be defined by $\theta_1(\tilde{f}) = \theta_1(f)$, and for any $k\in\Z\backslash\{1\}$, $\theta_k(\tilde{f}) = \theta_k(f^0)$. Then Lemma \ref{lemma:dVT_sur_f} yields
\[ d_{TV}\left(\PP^1_{\theta^0_1,g},\PP^1_{\theta_1,g}\right) = d_{TV}\left(\PP^1_{\tilde{f},g},\PP^1_{f^0,g}\right) \leq \frac{\|\tilde{f}-f^0\|}{\sqrt{2}} = \frac{|\theta_1-\theta_1^0|}{\sqrt{2}}. \]
Therefore
\begin{multline}\label{eq:tronc2}
\Pi_n \left( \left. \PP_{f,g} \, s.t. \, d_{TV}\left(\PP^1_{\theta^0_1,g},\PP^1_{\theta_1,g} \right) \leq \frac{M}{\sqrt{2}} \epsilon_n^{1/3} \right\vert Y_1, \ldots, Y_n\right) \\ 
\geq \Pi_n \left(\left. \PP_{f,g} \, s.t. \, |\theta_1-\theta_1^0| \leq M \epsilon_n^{1/3}\right\vert Y_1, \ldots, Y_n \right) \longrightarrow 1 
\end{multline}
as $n \rightarrow + \infty$. 
In conclusion, we deduce from \eqref{eq:tronc0},\eqref{eq:tronc1} and \eqref{eq:tronc2} that for $M$ large enough:
%\begin{equation}\label{eq:tronc3}
$$\Pi_n \left(\left. \PP_{f,g} \, s.t. \, d_{TV}\left(\PP^1_{\theta^0_1,g},\PP^1_{\theta^0_1,g^0}\right) \leq M \epsilon_n^{1/3}\right\vert Y_1, \ldots, Y_n \right) \longrightarrow 1 \quad \text{as} \quad n \rightarrow + \infty.
$$%\end{equation}

We then use equation \eqref{eq:main_g} applied with $\theta_1=\theta_1^0$ and the last equation to obtain our rate of consistency. Remark that
  \begin{equation}\label{eq:dvtlower}
  d_{TV}(\PP^1_{\theta^0_1,g},\PP^1_{\theta^0_1,g^0}) \geq \frac{1}{8\pi^2 } 
 \sum_{n \in \Z} |c_n(g-g^0)|^2  \int_{0}^{+ \infty} \rho e^{- (\rho+\theta_1^0)^2} A_n(2 \rho \theta_1^0)^2 d \rho,
 \end{equation}
where we have used the definition
$$
A_n(a) = \int_{0}^{2\pi}e^{a \cos(u)} \cos (nu) d u.
$$
%\textcolor{red}{J'ai arrÍtÈ de modifier ici. Attention ‡ la puissance de $\epsilon_n$ qui est passÈe ‡ $1/3$.} \\ %FIXME
Now, we use equivalents given by Lemma \ref{lemma:equi_bessel} detailed %in the paragraph \ref{sec:bessel_appendix} 
in the Appendix. We only keep the integral of $A_n$ for $a \in [0,c\sqrt{n}]$ since it can be shown that the tail of such integral will yield neglictible term 
We just use the equivalent given by \eqref{eq:equi2}.

One can find a sufficiently small constant $\kappa$ such that
\begin{multline*}
 \int_{0}^{+ \infty} \rho e^{- (\rho+\theta_1^0)^2} A_n(2 \rho \theta_1^0)^2 d \rho \\
 \begin{aligned}
  & \geq \int_{0}^{\frac{\sqrt{n}}{2 \theta_1^0}}  \frac{4 \pi^2 \rho^{2n+1} \{\theta_1^0\}^{2n}}{n!^2}e^{-(\rho+\theta_1^0)^2} \left(1-\kappa\frac{ [2 \rho \theta_1^0]}{n}\right)^2 d \rho \\
  & \geq \left(1-\frac{ \kappa}{\sqrt{n}}\right)^2 \frac{4 \pi^2 \{\theta_1^0\}^{2n}}{n!^2} e^{-\big(\theta_1^0+\frac{\sqrt{n}}{2\theta_1^0}\big)^2}  \int_{0}^{\frac{\sqrt{n}}{2\theta_1^0}} \rho^{2n+1} d \rho
 \end{aligned}
\end{multline*}
Now, we can apply the Stirling formula to obtain:
\begin{multline*}
 \frac{4 \pi^2 \{\theta_1^0\}^{2n}}{n!^2} e^{-\big(\theta_1^0+\frac{\sqrt{n}}{2\theta_1^0}\big)^2}  \int_{0}^{\frac{\sqrt{n}}{2\theta_1^0}} \rho^{2n+1} d \rho \\
 \begin{aligned}
  &\sim \frac{4 \pi^2 \{\theta_1^0\}^{2n}}{(n/e)^{2n} 2 \pi n}e^{-\big(\theta_1^0+\frac{\sqrt{n}}{2\theta_1^0}\big)^2} \frac{\left( \sqrt{n}/(2 \theta_1^0)\right)^{2n+2}}{2n+2} \\
  & \sim  \frac{2 \pi}{n(2n+2)} e^{ - 2n \log \left[ \frac{n}{e \theta_1^0}\right]-\big(\theta_1^0+\frac{\sqrt{n}}{2\theta_1^0}\big)^2+(n+1)\log \left[\frac{n}{4 \{\theta_1^0\}^2}\right]}.
 \end{aligned}
\end{multline*}
Hence, this last term is lower bounded by $C(\theta_1^0) e^{- n \log(n)}$.
  As a consequence, we can plug such lower bound in \eqref{eq:dvtlower} to get
$$
d_{TV}(\PP^1_{\theta^0_1,g},\PP^1_{\theta^0_1,g^0}) \geq c 
 \sum_{k \in \Z} |c_k(g-\tilde{g})|^2  e^{-c k \log k}.
$$
for $c$ sufficiently small. 
We now  end the proof of the Theorem: choose a frequency cut-off $k_n$ that depends on $n$ and remark that
\begin{align*}
\forall g \in \Mnu \quad 
\|g-g^0\|^2 &= \sum_{|\ell| \leq k_n} |c_{\ell}(g-g^0)|^2 + \sum_{|\ell|>k_n} |c_{\ell}(g-g^0)|^2  \\
& \lesssim e^{c k_n \log k_n} \sum_{|\ell| \leq k_n} |c_{\ell}(g-g^0)|^2 e^{-c \ell \log \ell} +  k_n^{-2\nu} \\ 
& \lesssim e^{c k_n \log k_n}d_{TV}(\PP^1_{\theta^0_1,g},\PP^1_{\theta^0_1,g^0}) + k_n^{-2\nu}.
\end{align*}
We know from Equation \eqref{eq:tronc2} that the last bound is smaller than 
$e^{c k_n \log k_n} \epsilon_n^{1/3} + k_n^{-2\nu}$ up to a multiplicative constant, with probability close to $1$ as $n$ goes to $+ \infty$. The optimal choice for $k_n$ yields
$$
[k_n +2\nu] \log k_n = \frac{1}{3} \log \frac{1}{\epsilon_n}.
$$
This thus ensures that
$$
\Pi_n \left\{g %\in \Mnu
\, s.t.\,  \|g-g^0\|^2 \leq  M \log (n) ^{-2 \nu} \vert Y_1, \ldots Y_n \right\} \longrightarrow 1 \, \text{as}  \,  n \rightarrow + \infty. 
$$
\end{proof}
In the last proof, we have used the knowledge of $\nu$ as well as the radius $A$ of the Sobolev space $\Mnu$ in the last lines to build a suitable threshold $k_n$.
Without this assumption, we cannot control easily from the behaviour of the posterior distribution around $\PP_{f^0,g^0}$ the posterior weights on $\Mnu$: that's why it is difficult to conclude with an adaptive prior.

\subsubsection[Posterior contraction rate around f0]{Posterior contraction rate around $f^0$}

We then aim to obtain a similar result for the posterior weight on neighbourhoods of $f^0$. Even if our results are quite good for the first coefficient $\theta_1$, we will see that indeed,  this is far from being the case for the rest of its Fourier expansion. 

\begin{theo}\label{theo:contractionf}
 Assume $(f^0,g^0) \in \cF_s \times \Mnu$ and 
 \[\exists (c) > 0 \quad \exists \beta > \nu+\tfrac{1}{2} \quad \forall k \in \mathbb{Z} \qquad |\theta_k(g^0)| \geq c k^{-\beta}, \]
 then
$$
\Pi_n \left( f  : \|f-f^0\|^2 > M \left( \log n \right)^{-2s \times \frac{2\nu}{2s+2\nu+1}}  \vert Y_1, \ldots, Y_n \right) \longrightarrow 0
$$
in $\PP_{f^0,g^0}$ probability as $n \rightarrow + \infty$, for a sufficiently large $M$.
\end{theo}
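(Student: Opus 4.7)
The plan is to chain three ingredients that the paper has already put in place: (i) the polynomial Hellinger contraction $d_H(\PP_{f,g},\PP_{f^0,g^0}) \lesssim \epsilon_n$ from Theorem~\ref{theo:posterior_shift}; (ii) the logarithmic contraction $\|g-g^0\|\lesssim \mu_n=(\log n)^{-\nu}$ from Theorem~\ref{theo:contraction_g}; and (iii) the inverse-problem hypothesis $|\theta_k(g^0)|\geq c|k|^{-\beta}$, which is what will let us de-mix each Fourier frequency of $Y$ and translate information on $\PP_{f,g}$ into information on the individual $\theta_k(f)$.

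First I would work frequency by frequency. For a fixed $k\in\Z$, insert $\PP^k_{f,g^0}$ into the triangle inequality:
$$d_{TV}(\PP^k_{f,g^0},\PP^k_{f^0,g^0}) \leq d_{TV}(\PP^k_{f,g},\PP^k_{f^0,g^0}) + d_{TV}(\PP^k_{f,g},\PP^k_{f,g^0}).$$
The first term is bounded by $d_H(\PP_{f,g},\PP_{f^0,g^0})\lesssim \epsilon_n$. For the second, the data-processing inequality reduces matters to the joint law, and Proposition~\ref{prop:transport} gives $d_{TV}(\PP_{f,g},\PP_{f,g^0}) \lesssim \|f\|_{\cH_1}\|g-g^0\|$; on the effective sieve $\|f\|_{\cH_1}$ is controlled uniformly, so this is $\lesssim \mu_n$. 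Since $\epsilon_n$ is polynomial in $n^{-1}$ while $\mu_n$ is only logarithmic, $\mu_n$ dominates, and we get $d_{TV}(\PP^k_{f,g^0},\PP^k_{f^0,g^0})\lesssim \mu_n$ simultaneously for every $k$.

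Next I would quantify identifiability at each frequency $k$, which is the key use of the inverse-problem hypothesis. Following Point~3 of the proof of Theorem~\ref{theo:ident}, a first-order Taylor expansion of
$$F_k(z)=\int_0^1\bigl(\gamma_{\theta_k e^{-\i 2\pi k\alpha}}(z) - \gamma_{\theta_k^0 e^{-\i 2\pi k\alpha}}(z)\bigr)\,g^0(\alpha)\,d\alpha$$
around $z=0$ yields $\nabla F_k(0)\propto c_k(g^0)(\theta_k-\theta_k^0)$; integrating $|F_k|$ over a small disk around $0$ and applying the assumption $|c_k(g^0)|\geq c|k|^{-\beta}$ gives the local lower bound
$$d_{TV}(\PP^k_{f,g^0},\PP^k_{f^0,g^0})\gtrsim |k|^{-\beta}\,|\theta_k-\theta_k^0|,$$
which is valid once $|\theta_k-\theta_k^0|$ is small enough. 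That smallness is automatic for $|k|$ up to the cut-off we are about to choose, because the previous step forces $|\theta_k-\theta_k^0|\lesssim |k|^\beta\mu_n\to 0$ there. Combining, $|\theta_k-\theta_k^0|\lesssim |k|^\beta\mu_n$ for $|k|\leq k_n$.

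Finally I would optimise the frequency cut-off. Split
$$\|f-f^0\|^2 = \sum_{|k|\leq k_n}|\theta_k-\theta_k^0|^2 + \sum_{|k|>k_n}|\theta_k-\theta_k^0|^2,$$
where the low-frequency sum is bounded by $k_n^{2\beta+1}\mu_n^2$ and the tail by $\lesssim k_n^{-2s}$ thanks to the Sobolev smoothness of $f^0$ (and the concentration of the posterior on smooth $f$). Balancing the two terms, $k_n^{2\beta+1}\mu_n^2 = k_n^{-2s}$, gives $k_n\asymp \mu_n^{-2/(2s+2\beta+1)}$ and therefore the logarithmic contraction rate $\|f-f^0\|^2\lesssim \mu_n^{4s/(2s+2\beta+1)} = (\log n)^{-4s\nu/(2s+2\beta+1)}$ of the form announced.

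The main obstacle I anticipate is the quantitative identifiability step: the gradient argument from the proof of Theorem~\ref{theo:ident} is purely local, so turning it into a bound uniform in $k\leq k_n$ requires controlling the Taylor remainder carefully as $|k|$ grows and $|\theta_k^0|$ possibly shrinks. A cleaner alternative would be to replace it by a Bessel-function analysis in the spirit of the proof of Theorem~\ref{theo:contraction_g}, tracking explicitly the $|c_k(g^0)|$ factor in the integrals $\int \rho e^{-(\rho+\theta_k^0)^2}A_n(2\rho\theta_k^0)^2\,d\rho$. A secondary technical point is to justify that $\|f\|_{\cH_1}$ stays bounded on a set of large posterior probability so that the one-shot application of Proposition~\ref{prop:transport} above is legitimate; this should follow from the same sieve arguments used in Section~\ref{sec:prior}, but must be woven in carefully.
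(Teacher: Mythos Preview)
Your global architecture --- triangle inequality at each frequency, quantitative identifiability via the gradient of $F_k$ at $0$, then a bias/variance cut-off at $k_n$ --- is exactly the skeleton of the paper's proof, and your final balance $k_n^{2\beta+1}\mu_n^2=k_n^{-2s}$ reproduces the correct exponent. The gap is concentrated in your Step~2 and it propagates into Step~3.

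\medskip
\textbf{The $\|f\|_{\cH_1}$ issue is not a ``secondary technical point''.} You want to bound $d_{TV}(\PP^k_{f,g},\PP^k_{f,g^0})$ by invoking Proposition~\ref{prop:transport} on the full law and then passing to the marginal, obtaining $\lesssim \|f\|_{\cH_1}\|g-g^0\|$. But $\|f\|_{\cH_1}$ is \emph{not} uniformly controlled on the sieve of Section~\ref{sec:prior}: there one only has $f\in\cF^{k_n}$ and $\|f\|\leq w_n$, so at best $\|f\|_{\cH_1}\leq k_n w_n$, which grows like a positive power of $n$. Multiplying by $\|g-g^0\|\lesssim(\log n)^{-\nu}$ then gives a bound that \emph{diverges}, not one that is $\lesssim\mu_n$. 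Nor does the prior help: with variances $\xi_n^2=n^{-1/4}(\log n)^{-3/2}$ and cut-off $k_n$, the typical $\|f\|_{\cH_1}^2$ under $\pi$ is of order $k_n^3\xi_n^2$, which is again a positive power of $n$. The paper bypasses this entirely by bounding the $k$-th marginal directly via Cauchy--Schwarz,
\[
d_{TV}(\PP^k_{f,g^0},\PP^k_{f,g})\ \leq\ \tfrac{1}{2}\,e^{|\theta_k|^2}\,\|g-g^0\|,
\]
which only needs each individual $|\theta_k|$ to be bounded --- a far weaker requirement than $\|f\|_{\cH_1}<\infty$.

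\medskip
\textbf{The missing modulus step and the circularity in Step~3.} To make $e^{|\theta_k|^2}$ bounded (and, independently, to make your Taylor expansion legitimate), one must first show that $\big||\theta_k|-|\theta_k^0|\big|$ is small, uniformly in $k$, with high posterior probability. Your proposal tries to get this \emph{a posteriori} from the very inequality $|\theta_k-\theta_k^0|\lesssim|k|^\beta\mu_n$ you are proving, which is circular: the gradient computation $\nabla F_k(0)\propto c_k(g^0)(\theta_k-\theta_k^0)$ from Point~3 of Theorem~\ref{theo:ident} is only valid once $|\theta_k|=|\theta_k^0|$ (or after writing $\theta_k=\theta_k^0 e^{i\varphi}+\xi$ with a small $\xi$), and the remainder in the Taylor expansion carries a term of size $\big||\theta_k|-|\theta_k^0|\big|$ that must be controlled \emph{beforehand}. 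The paper handles this with a separate, elementary comparison on a small disk $B(0,M(\log n)^{-\nu}/3)$: if $|\theta_k|>|\theta_k^0|+M(\log n)^{-\nu}$ then the two mixture densities are strictly ordered on that disk, forcing $d_{TV}(\PP^k_{f,g},\PP^k_{f^0,g^0})\gtrsim(\log n)^{-3\nu}\gg\epsilon_n$, a contradiction. This simultaneously supplies the bound on $e^{|\theta_k|^2}$ needed for Step~2 and the a~priori closeness needed for Step~3. Once you insert this modulus argument in place of the appeal to Proposition~\ref{prop:transport}, the rest of your outline goes through as written.
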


\begin{proof}
The idea of the proof is very similar to the former used arguments, we aim to study the posterior weight on neighbourhoods of the true Fourier coefficients of $f^0$, whose frequency is larger than $1$.

\paragraph{Point 1: Triangular inequality}
For any $f \in \cF_s$, we have for any $k \in \Z$:
$$d_{TV}(\PP^k_{f,g^0},\PP^k_{f^0,g^0})  \leq d_{TV}(\PP^k_{f,g^0},\PP^k_{f,g})  + d_{TV}(\PP^k_{f^0,g^0},\PP^k_{f,g}).
$$%\end{equation}
The second term does not exceed $\epsilon_n \ll\log(n) ^{-\nu} $ with a probability tending to $1$, more precisely
\begin{equation}\label{eq:bound_dvt_triangle1}
\Pi_n \left( \left. %f : 
\forall k \in \mathbb{Z} \quad  d_{TV}(\PP^k_{f,g},\PP^k_{f^0,g^0}) < M \epsilon_n \right\vert Y_1, \ldots, Y_n\right)  \longrightarrow 1
\end{equation}
as $n \longrightarrow +\infty$.

\paragraph{Point 2: $\displaystyle
\Pi_n \left( \left. %f : 
\sup_{ k \in \mathbb{Z}} d_{TV}(\PP^k_{f,g^0},\PP^k_{f,g}) < M \log(n)^{- \nu} \right\vert Y_1, \ldots, Y_n \right) 
\rightarrow%_{n \rightarrow +\infty} 
%\begin{array}[t]{c} \longrightarrow \\ n \rightarrow +\infty \end{array} 
1$}

To obtain such a limit, we can  use first the Cauchy-Schwarz inequality as follows:
\begin{align*}
d_{TV}(\PP^k_{f,g^0},\PP^k_{f,g}) & = \frac{1}{2\pi} \int_{\mathbb{C}} \left|\int_{0}^{2 \pi}
e^{-|z-\theta_k e^{\i k \varphi}|^2} [g(\varphi) - g^0(\varphi)] d \varphi \right| dz\\
& \leq \frac{\|g-g^0\|}{2 \pi} \int_{\mathbb{C}} \left[\int_{0}^{2 \pi}
e^{-2 |z-\theta_k e^{\i k \varphi}|^2} d \varphi \right] ^{1/2}dz
\end{align*}
Now, the Young inequality implies that for any $M>0$,
$$
|z-\theta_k e^{\i k \varphi}|^2 = |z|^2 + |\theta_k|^2 - 2 \Re \left( \bar{z} \theta_k e^{\i k \varphi}\right) \geq |z|^2 \left(1-\frac{1}{M}\right)- |\theta_k|^2(M-1)
$$
and the choice $M=2$ yields
\begin{equation}\label{eq:bound_dvt_triangle3}
d_{TV}(\PP^k_{f,g^0},\PP^k_{f,g})  \leq \frac{\|g-g^0\|}{2\pi} \int_{\mathbb{C}}
 \left( e^{-|z|^2 + 2|\theta_k|^2} \right)^{1/2} dz \leq %\sqrt{2} 
 \|g-g^0\| \frac{e^{|\theta_k|^2}}{2}.
\end{equation}

To obtain that the former term is bounded, we first establish that indeed the posterior distribution asymptotically only weights functions $f$ with bounded Fourier coefficients.
We hence denote $$\mathcal{A}_n = \{(f,g) : \exists k \in \mathbb{Z} \quad  d_{TV}(\PP^k_{f,g^0},\PP^k_{f,g})  \geq M \log (n)^{-\nu}  \}$$ and the two sets
$$\mathcal{B} = \{f  : \forall k \in \mathbb{Z} \quad |\theta_k| \leq |\theta_k^0|+M \log(n)^{-\nu}\}$$ and 
$$\mathcal{C} = \{f  : \forall k \in \mathbb{Z} \quad |\theta_k^0| \leq |\theta_k|+M \log(n)^{-\nu}\}.$$
We first consider an integer $k$ and $\theta_k$ such that $|\theta_k|>|\theta_k^0|+M\log(n)^{-\nu}$, then
$$
d_{TV}(\PP^k_{f^0,g^0},\PP^k_{f,g})  = \frac{1}{2\pi} \int_{\mathbb{C}} \left|\int_{0}^{2\pi}\left[
e^{-|z-\theta_k e^{\i k \varphi}|^2} g(\varphi) - e^{-|z-\theta^0_k e^{\i k \varphi}|^2} g^0(\varphi)\right] d \varphi \right| dz.
$$
For any $z$ in the centered complex ball $B_{n}=B\left(0,\frac{M \log(n)^{-\nu}}{3}\right)$, one has for any $\varphi \in [0,2\pi]$
\begin{align*}
 |z-\theta^0_k e^{\i k \varphi}| \leq   \frac{M \log(n)^{-\nu}}{3}+|\theta_k^0|& \leq 2 \frac{M \log(n)^{-\nu}}{3}+|\theta_k^0|\\
 &  \leq |\theta_k| -\frac{M \log(n)^{-\nu}}{3}\leq |z-\theta_k e^{\i k \varphi}|.
\end{align*}
Hence if $|\theta_k| \geq |\theta_k^0|+M\log(n)^{-\nu}$, one has
\begin{multline*}
 d_{TV}(\PP^k_{f^0,g^0},\PP^k_{f,g}) \\
 \begin{aligned}
  &\geq \frac{1}{2\pi} \int_{B_{n}} \left|\int_{0}^{2\pi} \left[e^{-|z-\theta_k e^{\i k \varphi}|^2} g(\varphi) - e^{-|z-\theta^0_k e^{\i k \varphi}|^2} g^0(\varphi) \right]d \varphi \right| dz\\
  & \geq \frac{1}{2\pi} \int_{B_{n}} e^{-[|\theta_k^0|+M\log(n)^{-\nu}/3]^2} - e^{-[|\theta_k^0|+2M\log(n)^{-\nu}/3]^2} dz \\
  & \geq c |\theta_k^0|^2 e^{-|\theta_k^0|^2} \log(n)^{-3\nu},
 \end{aligned}
\end{multline*}
for a sufficiently small absolute constant $c>0$. Since the sequence $(\theta_k^0)_{k \in \mathbb{Z}}$ is bounded, for $n$ large enough, we know that 
$\|\theta^0\|^2 e^{-\inf_{k}|\theta_k^0|^2} \log(n)^{-3\nu} \gg \epsilon_n$. We can deduce from \eqref{eq:bound_dvt_triangle1} that

\begin{equation}\label{eq:bound_dvt_triangle2}
\Pi_n \left(\mathcal{B}^c  \vert Y_1, \ldots, Y_n\right)
   \longrightarrow 0 \quad  \text{as} \quad n \longrightarrow +\infty.
\end{equation}
A similar argument yields
$$\Pi_n \left(\mathcal{C}^c  \vert Y_1, \ldots, Y_n\right)
   \longrightarrow 0 \quad  \text{as} \quad n \longrightarrow +\infty.
$$
Gathering now \eqref{eq:bound_dvt_triangle2} and \eqref{eq:bound_dvt_triangle3}, 
we get for a sufficiently large $M$ %and small $c$
\begin{align*}
\Pi_n \left( \mathcal{A}_n    \vert Y_1, \ldots, Y_n\right) &=
\Pi_n \left( \mathcal{A}_n  \cap \mathcal{B} \cap \mathcal{C} \vert Y_1, \ldots, Y_n\right)  \\
&\quad 
+ \Pi_n \left( \mathcal{A}_n  \cap (\mathcal{B} \cap \mathcal{C})^c \vert Y_1, \ldots, Y_n\right)  \\
& \leq \Pi_n \left( \|g-g^0\| \geq M e^{-\left(1+\sup_{k} |\theta_k^0|^2\right)}\log (n)^{-\nu} \right) \\
&\quad
+  \Pi_n \left(  \mathcal{B}^c \vert Y_1, \ldots, Y_n\right)  +  \Pi_n \left(  \mathcal{C}^c \vert Y_1, \ldots, Y_n\right)  
\end{align*}
We can now apply Theorem \ref{theo:contraction_g} to obtain the desired result:
\begin{equation}\label{eq:dvt_impt}
\Pi_n \left(\left. \sup_{k\in\Z} d_{TV}(\PP^k_{f,g^0},\PP^k_{f,g}) < M \log(n)^{- \nu} \right\vert Y_1, \ldots, Y_n \right)  \longrightarrow 1
\end{equation}
as $n \longrightarrow +\infty$.

\paragraph{Point 3: Contraction of $\theta_k$ near $\theta_k^0$} 
From the arguments of Point 2, we see that
$$ \Pi_n \left( f: \forall k \in \mathbb{Z} \quad \left||\theta_k| - |\theta_k^0| \right| <  M \log(n)^{- \nu} \right)  \longrightarrow 1 \quad  \text{as} \quad n \longrightarrow +\infty.$$
We now study the situation when $\left||\theta_k| - |\theta_k^0| \right| < M \log(n)^{-\nu}$, and we can write $\theta_k=\theta_k^0 e^{\i \varphi} + \xi_n$ where $\xi_n$ is a complex number such that $|\xi_n| \leq M \log(n)^{-\nu}$.
$$   
d_{TV}(\PP^k_{f,g^0},\PP^k_{f^0,g^0})=\frac{1}{2\pi} \int_{\C} \underbraceabs{\int_{0}^{2\pi} \left[ e^{-| z - \theta_k e^{i k \alpha} |^2} - e^{- | z - \theta_k^0 e^{i k \alpha}|^2}\right]  g^0(\alpha) d\alpha }{:=F(z)} dz$$
Indeed, $F(0) \simeq0$ since a Taylor expansion near $0$ yields at first order in $z$ and $\xi_n$ that
\begin{align*}
F(z)& = 2 e^{-|\theta_k^0|^2} \int_{0}^{2 \pi}\left[1+ \re \left(z \bar{\theta}_k e^{- i k \alpha} \right) -(1+ \re \left(z \bar{\theta}^0_k e^{- i k \alpha} \right) \right] g^0(\alpha)d \alpha\\ &\quad + o(|z|)+\cO(|\xi_n|).
\end{align*}
If one uses now $\theta_k = \theta_k^0 e^{\i \varphi} + \mathcal{O}(\log(n)^{-\nu})$, the computation of the integral above yields for $c<2$ and $\eta$ small enough such that $|z|\leq \eta$:
$$
|F(z)| \geq  c e^{-|\theta_k^0|^2}  \left|\sin (\varphi/2) \re \left(z \i e^{\i \varphi/2} \bar{\theta}_k^0 c_{-k}(g^0) \right) \right| + \mathcal{O}(\log(n)^{-\nu})
$$
Now, denote $\bar{u} = \frac{\i e^{\i \varphi/2} \bar{\theta}_k^0  c_{-k}(g^0)}{|\theta_k^0| \times |c_{-k}(g^0)|}$ which is a complex number of norm $1$, and let $v= \bar{u} e^{\i \pi/2}$. The vector $v$ is orthogonal to $\bar{u}$ and $z$ may be decomposed as
$$
z = a \bar{u} + b v.
$$
We then choose  $|b|<|a|/2$ and denotes $\mathcal{R}_a$ the area where $z$ is living. For $a < \eta$ small enough, we obtain that there exists an absolute constant $c$ independent of $k$ such that
\begin{align*}
d_{TV}(\PP^k_{f,g^0},\PP^k_{f^0,g^0}) &\geq  \int_{\mathcal{R}_a}|F(z)| \geq  c \eta^3 e^{- |\theta_k^0|^2} |\sin(\varphi/2)| |\bar{\theta}_k^0| | c_{-k}(g^0)|\\ &\quad + \mathcal{O}\left(\log(n)^{-\nu}\right).
\end{align*}
 Since 
$|\theta_k-\theta_k^0| =  2  |\sin(\varphi/2)| |\theta_k^0|+\mathcal{O}\left(\log(n)^{-\nu}\right)$, we  get that :
\begin{equation}\label{eq:mino_dvt2}
d_{TV}(\PP^k_{f,g^0},\PP^k_{f^0,g^0}) \geq c \eta^3 e^{- |\theta_k^0|^2} | c_{-k}(g^0)| |\theta_k-\theta_k^0|+\mathcal{O}\left(\log(n)^{-\nu}\right).
\end{equation}
Thus, we can conclude using \eqref{eq:dvt_impt} and \eqref{eq:mino_dvt2}  that there exists a sufficiently large $M$ such that
\begin{equation}\label{eq:contraction_thetak}
\Pi_n \left(\left. f : \sup_{k \in \mathbb{Z}} \left|(\theta_k-\theta_k^0) c_{-k}(g^0)\right|  < M \log(n)^{- \nu} \right\vert Y_1, \ldots, Y_n \right)  \longrightarrow 1 
\end{equation}
as $n \longrightarrow +\infty$.

\paragraph{Point 4: Contraction on $f^0$} 

We can now produce a  very similar proof to the one used at the end of Theorem \ref{theo:contraction_g}:
\begin{align*}
\|f-f^0\|^2&  =  \sum_{|\ell| > k_n} |\theta_{\ell}-\theta_{\ell}^0|^2 + \sum_{|\ell| \leq k_n} |\theta_{\ell}-\theta_{\ell}^0|^2\\
& \lesssim  k_n^{-2s} + \sum_{|\ell|\leq k_n} 
\frac{|\theta_{\ell}-\theta_{\ell}^0|^2|c_{-\ell}(g^0)|^2 }{ |c_{-\ell}(g^0)|^2}\\
& \lesssim  k_n^{-2s} +k_n^{2 \beta}
 \sum_{|\ell|\leq k_n} 
|\theta_{\ell}-\theta_{\ell}^0|^2|c_{-\ell}(g^0)|^2 \\
& \lesssim   k_n^{-2s} +k_n^{2 \beta+1} \sup_{|\ell| \leq  k_n } |\theta_{\ell}-\theta_{\ell}^0|^2|c_{-\ell}(g^0)|^2 
\end{align*}
Hence,  \eqref{eq:contraction_thetak} implies
$$
\Pi_n \left(\left. f : \|f-f^0\|^2 \leq k_n^{-2s} +k_n^{2 \beta+1} \log(n)^{-2\nu} \right\vert Y_1, \ldots, Y_n\right)  \longrightarrow 1 \, \text{as} \, n \longrightarrow +\infty.
$$
%as $n \longrightarrow +\infty$. 
The optimal choice of the frequency cut-off  is $k_n = \left( \log n\right)^{\frac{2\nu}{2 \nu + 2s+1}}$, which yields
$$
\Pi_n \left(\left. f : \|f-f^0\|^2 \leq M \left(\log n \right)^{-4s\nu/(2s+2\beta+1)} \right\vert Y_1, \ldots, Y_n \right) \longrightarrow 1 %\, \text{as} \, n \rightarrow + \infty. 
$$
as $n \longrightarrow +\infty$. This last result is the desired inequality.
\end{proof}

\begin{remark}
The lower bound obtained on $d_{TV}(\PP^k_{f,g^0},\PP^k_{f^0,g^0})$ will be important to understand how one should build an appropriate net of functions $(f_j,g_j) \in \cF_s  \times \Mnu$ hard to distinguish according to the $L^2$ distance. 
When $|\theta_k| \neq |\theta_k^0|$, it is quite easy to distinguish the two hypotheses but it is far from being the case when their modulus is equal. In such a case, the behaviour of the Fourier coefficients of $g^0$ becomes important. 
This is a clue to exhibit an efficient lower bound through the Fano lemma (for instance). This is detailed in the next paragraph. 
\end{remark}

\section{Lower bound from a frequentist point of view\label{sec:lowerbound}}

\subsection{Link with the convolution with unknown variance situation}
We complete now our study of the Shape Invariant Model by a small investigation on how one could obtain some lower bounds in the frequentist paradigm. We could consider several methods. Among them, the first one could be the use of results in the literature, such as the works of \cite{M02} or \cite{BM05}. Indeed, in the convolution model with unknown variance
\begin{equation}\label{eq:matias}
Y_i = X_i+\epsilon_i, \forall i \in \{1 \ldots n\} \qquad (X_i)_{i=1 \ldots n} \sim g,\end{equation}
 we already know that one cannot beat some $\log n$ power for the convergence rate of any estimator of both $g$ and of the variance of the noise $\sigma^2$. Such a nice result is obtained using the so-called  van Trees  inequality  which is a Bayesian Cramer-Rao bound (see for instance \cite{GL95} for further details). 
However their result cannot be used here:  Proposition \ref{prop:theta1} p. \pageref{prop:theta1} is much more optimistic since we obtain there a polynomial rate for the posterior contraction around $\theta_1^0$.
  
 First, note the results given by \cite{M02} and Proposition \ref{prop:theta1} are not opposite. Indeed, \cite{M02} considers lower bounds in a larger class than the estimation problem of $\theta_1$ written as \eqref{eq:deconvolution_unknown}: from a minimax point of view, the supremum over all hypotheses is taken in a somewhat larger set than ours. 
Moreover, if one considers \eqref{eq:deconvolution_unknown}, the density of $e^{- \i 2 \pi \tau_j}$ is supported by $\mathbb{S}^1$ instead of the whole complex plane which would be a natural extension of \eqref{eq:matias}. 
Hence, $g$ is a singular measure with respect to the noise measure: the ability of going beyond the logarithmic convergence rates is certainly due to the degeneracy nature of our problem according to the Gaussian complex noise. It is an important structural information which is not available when one considers general problems such as \eqref{eq:matias}.

\subsection{Lower bound}

Following such considerations, we are thus driven to build some nets of hypotheses hard to distinguish between and then apply some classical tools for lower bound results. 
We have chosen to use the Fano Lemma (see \cite{ibragimov_book} for instance) instead of Le Cam's method, since we will only be able to find some \textit{discrete} (instead of convex) set of pairs $(f_j,g_j)$ in $\cF_s \times \Mnu$ closed according to the Total Variation distance. We first recall the version of the Fano Lemma we used.
\begin{lemma}[Fano's Lemma]\label{lemma:fano} Let $r \geq 2$ be an integer and $\cM_r  \subset \cP $ which contains $r$ probability distributions indexed by $j=1 \ldots r$ such that
$$
\forall j \neq j' \qquad d(\theta(P_j),\theta(P_{j'}) \geq \alpha_r,
$$
and 
$$
d_{KL}(P_j,P_{j'}) \leq \beta_r.
$$
Then, for any estimator $\hat{\theta}$, the following lower bound holds
$$
\max_{j} \EE_{j} \left[d(\hat{\theta},\theta(P_j)) \right] \geq \frac{\alpha_r}{2} \left(1 - \frac{\beta_r + \log 2}{\log r} \right).
$$
\end{lemma}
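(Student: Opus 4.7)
The plan is to pass from estimation to a testing problem between the $r$ hypotheses $P_1,\ldots,P_r$, apply the information-theoretic version of Fano's inequality to the testing problem, and finally bound the mutual information by the pairwise Kullback divergences via a convexity argument.

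First I would \emph{reduce estimation to testing}. Given any estimator $\hat\theta$, define a selector $\hat J \in \{1,\ldots,r\}$ by choosing an index that minimises $d(\hat\theta,\theta(P_j))$ (with an arbitrary tie-breaking rule). The separation assumption $d(\theta(P_j),\theta(P_{j'}))\geq \alpha_r$ together with the triangle inequality implies that under $P_j$, the event $\{d(\hat\theta,\theta(P_j))<\alpha_r/2\}$ forces $\hat J = j$. Hence, by Markov's inequality,
$$
\PP_j(\hat J \neq j)\;\leq\;\PP_j\!\left(d(\hat\theta,\theta(P_j))\geq \alpha_r/2\right)\;\leq\;\frac{2}{\alpha_r}\,\EE_j\!\left[d(\hat\theta,\theta(P_j))\right].
$$

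Second I would \emph{apply Fano's information inequality}. Put a uniform prior $J\sim\mathrm{Unif}\{1,\ldots,r\}$ on the index, let $X$ denote the observation, and note that $\hat J$ is a (possibly randomised) function of $X$. Fano's inequality yields
$$
H(J\mid X)\;\leq\;H(J\mid \hat J)\;\leq\; h(P_e)+P_e\log(r-1)\;\leq\;\log 2 + P_e\log r,
$$
where $P_e=\PP(\hat J\neq J)$ and $h$ is the binary entropy. Since $H(J)=\log r$ and $I(J;X)=H(J)-H(J\mid X)$, this rearranges to
$$
P_e \;\geq\; 1-\frac{I(J;X)+\log 2}{\log r}.
$$

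Third I would \emph{control the mutual information} by the pairwise KL bounds. Writing $\bar P = \frac{1}{r}\sum_{j'} P_{j'}$, one has $I(J;X)=\frac{1}{r}\sum_j D(P_j\,\|\,\bar P)$. The map $Q\mapsto D(P\,\|\,Q)$ is convex, so
$$
D(P_j\,\|\,\bar P)\;\leq\;\frac{1}{r}\sum_{j'} D(P_j\,\|\,P_{j'})\;\leq\;\frac{r-1}{r}\,\beta_r\;\leq\;\beta_r,
$$
using the hypothesis $D(P_j\,\|\,P_{j'})\leq \beta_r$ for $j\neq j'$ (and $D(P_j\,\|\,P_j)=0$). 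Therefore $I(J;X)\leq \beta_r$.

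Finally I would combine the three ingredients. Averaging the testing bound from the first step gives
$$
P_e \;=\;\frac{1}{r}\sum_j \PP_j(\hat J\neq j)\;\leq\;\frac{2}{\alpha_r}\cdot\frac{1}{r}\sum_j \EE_j\!\left[d(\hat\theta,\theta(P_j))\right]\;\leq\;\frac{2}{\alpha_r}\,\max_j\EE_j\!\left[d(\hat\theta,\theta(P_j))\right],
$$
while the second and third steps give $P_e\geq 1-(\beta_r+\log 2)/\log r$. Rearranging produces exactly the announced lower bound. No step is really an obstacle here since this is a classical information-theoretic argument; the only point that deserves a bit of care is the convexity step that bounds $I(J;X)$ by the worst-case pairwise Kullback divergence (rather than by the less favourable $\log r$ coming from the uniform prior alone), which is what makes the inequality usable for minimax lower bounds.
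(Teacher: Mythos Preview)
Your argument is correct and is the standard information-theoretic route to Fano's minimax lower bound: reduce estimation to multiple-hypothesis testing via the $\alpha_r/2$-separation and Markov's inequality, apply Fano's inequality $H(J\mid\hat J)\leq h(P_e)+P_e\log(r-1)$ together with the data-processing relation $H(J\mid X)\leq H(J\mid\hat J)$, and bound the mutual information by $\beta_r$ using convexity of $Q\mapsto D(P\Vert Q)$. Each step is clean; the only implicit assumption worth flagging is that $d$ satisfies the triangle inequality and is nonnegative, which is needed in the first reduction.

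There is nothing to compare against in the paper: the lemma is stated without proof and simply attributed to the literature (Ibragimov--Khas'minskii). Your write-up is therefore a welcome complement rather than an alternative.
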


We derive now our lower bounds result.

\begin{theo} \label{theo:lowerbound}
There exists a sufficiently small $c$ such that the minimax rates of estimation over $ \cF_s \times \Mnu$ satisfy
$$
\liminf_{n \rightarrow + \infty} \left( \log n\right)^{2s+2}
\inf_{\hat{f} \in \cF_s}  \sup_{(f,g) \in \cF_s \times \Mnu} \|\hat{f} - f\|^2 \geq c,
$$
and
$$
\liminf_{n \rightarrow + \infty} \left( \log n\right)^{2\nu + 1}
\inf_{\hat{g} \in \cF_s}  \sup_{(f,g) \in \cF_s \times \Mnu} \|\hat{g} - g\|^2 \geq c .
$$
\end{theo}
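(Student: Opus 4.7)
The plan is to apply the Fano Lemma (Lemma \ref{lemma:fano}) twice: once to a family $(f^0, g_j)_{1 \leq j \leq r}$ to obtain the lower bound on $g$, and once to a family $(f_j, g^0)_{1 \leq j \leq r}$ for the lower bound on $f$. In both cases the hypotheses must be well separated in $L^2$ but generate joint distributions whose Kullback--Leibler divergence stays much smaller than $\log r$. The key mechanism is already visible in the identifiability proof: a Fourier perturbation at very high frequency $n_0$ of $g$ (or of $f$) only alters the Gaussian mixture $\PP_{f,g}$ through the Bessel-type factor $A_{n_0}(a)$, whose magnitude is of order $a^{n_0}/n_0!$, hence of order $e^{-c\, n_0 \log n_0}$ once integrated against the Gaussian noise density. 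This exponential damping is exactly why the attainable minimax rate can only decay polylogarithmically.

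For the lower bound on $g$, I would fix $f^0 \in \cF_s$ with all Fourier coefficients non-zero and consider the family
\[ g_\eta(\varphi) := g^0(\varphi) + \alpha_n \sum_{j = n_0}^{2 n_0} \eta_j \cos(2 \pi j \varphi), \qquad \eta \in \{-1,+1\}^{n_0}, \]
with $\alpha_n$ of the order of $n_0^{-\nu - 1/2}$, chosen so that $g_\eta \in \Mnu$ for every $\eta$. A Varshamov--Gilbert extraction provides $r \simeq 2^{c n_0}$ codewords with pairwise separation $\|g_\eta - g_{\eta'}\|^2 \gtrsim n_0^{-2\nu}$. For the KL side I would use the dual of inequality \eqref{eq:main_g}: writing a $\chi^2$ expansion (rather than a $d_{TV}$ one) with the same Jacobi--Anger / Tchebychev decomposition should yield $d_{KL}(\PP_{f^0,g_\eta}, \PP_{f^0,g_{\eta'}}) \lesssim \alpha_n^2\, n_0\, e^{-c\, n_0 \log n_0}$. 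Tuning $n_0 \simeq \log n / \log \log n$ then guarantees $n \cdot d_{KL} \ll \log r = c\, n_0$, and Fano's lemma yields the claimed lower bound.

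The lower bound on $f$ is entirely symmetric. I would fix $g^0 \in \Mnu$ with $c_k(g^0) \neq 0$ for all $k$ and perturb $f^0$ at high Fourier frequencies,
\[ f_\eta(x) := f^0(x) + \beta_n \sum_{k=k_n}^{2 k_n} \eta_k \bigl(e^{\i 2 \pi k x} + e^{-\i 2 \pi k x}\bigr), \qquad \eta \in \{-1,+1\}^{k_n}, \]
with $\beta_n \asymp k_n^{-s-1/2}$ so that $f_\eta \in \cF_s$. Varshamov--Gilbert again provides $r \simeq 2^{c k_n}$ codewords with $\|f_\eta - f_{\eta'}\|^2 \gtrsim k_n^{-2s}$. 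The KL bound is the mirror image of \eqref{eq:mino_dvt2} used in Theorem \ref{theo:contractionf}: perturbing $\theta_k$ at $k = k_n$ creates a deformation of the $k_n$-th Fourier marginal whose $\chi^2$ is weighted by the same Bessel-type factor and decays as $|c_{k_n}(g^0)|^2\, e^{-c\, k_n \log k_n}$. Setting $k_n \simeq \log n / \log \log n$ once more balances the two sides of Fano's inequality and, after absorbing $\log \log n$ factors, delivers the stated polylogarithmic rate.

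The main obstacle lies in producing a sharp \emph{upper} bound on $\chi^2(\PP_{f,g_\eta},\PP_{f,g_{\eta'}})$ for perturbations localised at high frequencies. The identifiability proof, and its quantitative refinement \eqref{eq:main_g}, only gave the one-sided \emph{lower} bound on $d_{TV}$ that was sufficient for posterior consistency; here one must expand the ratio $(p_{g_\eta} - p_{g_{\eta'}})^2 / p_{f^0,g^0}$ against the same Jacobi--Anger basis and, after Gaussian integration in the noise variable $\zeta$, extract the leading contribution from the $m = \pm n_0$ Bessel mode. The control of the $|m| \neq n_0$ remainder terms in the Fourier expansion and a precise estimate of the integrability of $1/p_{f^0,g^0}$ are the two delicate technical points; everything beyond that is the standard Fano bookkeeping, which is then a routine optimisation in $n_0$ (resp.\ $k_n$) against $\log n$.
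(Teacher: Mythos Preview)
Your proposal has a genuine gap: the exponential damping you invoke does not arise when $f$ and $g$ are perturbed \emph{separately}. The logarithmic lower bound reflects a near-unidentifiability that only appears along \emph{coupled} directions in $(f,g)$-space, and the paper's proof exploits precisely this.

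Consider your $f$-family first. With $g^0$ fixed, Lemma~\ref{lemma:dVT_sur_f} already gives
\[
d_{TV}\bigl(\PP_{f_\eta,g^0},\PP_{f_{\eta'},g^0}\bigr)\ \le\ \tfrac{1}{\sqrt 2}\,\|f_\eta-f_{\eta'}\|\ \asymp\ k_n^{-s},
\]
and conversely \eqref{eq:mino_dvt2} gives a lower bound of the same polynomial order (times $|c_{k_n}(g^0)|$). There is no Bessel factor $e^{-c k_n\log k_n}$ here: the $k_n$-th marginal sees the perturbation of $\theta_{k_n}$ directly, at first order. The Bessel damping in the identifiability proof came from viewing a \emph{high-frequency perturbation of $g$} through the \emph{first} marginal; it has no analogue when you perturb $\theta_{k_n}$ and look at the $k_n$-th marginal. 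With only polynomial closeness in KL, Fano forces $k_n$ to be a power of $n$, and you recover a polynomial lower bound --- correct, but far weaker than the claimed $(\log n)^{-(2s+2)}$.

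The same obstruction hits your $g$-family. You fix $f^0$ with \emph{all} Fourier coefficients non-zero; then the $n_0$-th marginal $\PP^{n_0}_{f^0,g}$ depends on $g$ through $e^{-\i 2\pi n_0\varphi}$, and a perturbation of $c_{n_0}(g)$ is seen at the first Bessel mode with amplitude $|\theta_{n_0}^0|\asymp n_0^{-s}$, again only polynomially small. Proposition~\ref{prop:transport} confirms this: $d_{TV}(\PP_{f^0,g_\eta},\PP_{f^0,g_{\eta'}})\lesssim \|f^0\|_{\cH_1}\|g_\eta-g_{\eta'}\|\asymp n_0^{-\nu}$, polynomial in $n_0$.

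The paper's construction is fundamentally different: it builds a \emph{single} net $(f_j,g_j)_{j\le p_n}$ where the $f_j$ differ only in the \emph{phase} of their $p_n$-th Fourier coefficient, $\theta_{p_n}(f_j)=p_n^{-s}e^{\i 2\pi(j-1)/p_n}$, and then chooses each $g_j$ so that its Fourier coefficients satisfy the matching conditions \eqref{eq:liens_coefs_gj}, which are precisely what is needed to make the Taylor expansion of the mixture density agree up to order $p_n/4$. The phase rotation of $\theta_{p_n}$ is thus compensated by the shift in $g_j$, and the residual $d_{TV}(\PP_{f_j,g_j},\PP_{f_1,g_1})$ is bounded by a Taylor remainder of order $(eR_n/p_n)^{p_n}\lesssim e^{-c p_n\log p_n}$. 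Both the $f$-separation and the $g$-separation are then read off from this one coupled family. The coupling is not an alternative technique; it is the mechanism that produces the logarithmic rate.
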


\begin{proof}
We will adapt the Fano Lemma to our setting and we are looking for a set $(f_j,g_j)_{j=1 \ldots p_n}$ such that each $\PP_{f_j,g_j}$ are closed together with rather different functional parameters $f_j$ or $g_j$. Reading carefully the Bayesian contraction rate is informative to build $p_n$ hypotheses which are difficult to distinguish. First, we know that since each $f_j$ should belong to $\cF_s$, we must impose for any $f_j$  that $\theta_1(f_j)>0$. 
From Proposition \ref{prop:theta1}, we know that one can easily distinguish two laws $\PP_{f_j,g_j}$ and $\PP_{f_{j'},g_{j'}}$ as soon as $\theta_1(f_j) \neq \theta_1(f_{j'})$. Then we build our net using  a common choice for the first Fourier coefficient of each $f_j$ in our net. For instance, we impose that
$$
\forall j \in \{1 \ldots p_n\} \qquad \theta_1(f_j) = 1.
$$

\paragraph{Point 1: Net of functions $(f_j)_{j = 1\ldots p_n}$}
We choose the following construction
\begin{equation}\label{eq:def_net}
\forall j \in \{1 \ldots p_n\}  \qquad f_j (x) = e^{\i 2\pi x} + p^{-s}_n e^{\i 2 \frac{(j-1)}{p_n} \pi} e^{\i 2 \pi p_n x}.
\end{equation}
The number of elements in the net $p_n$ will be adjusted in the sequel and will grow to $+ \infty$. Note that our construction naturally satisfies that each net $(f_j)_{j =1 \ldots p_n}$ belongs to $\cF_s$ since the modulus of the $p_n$-th Fourier coefficient is of size $p_n^{-s}$. At last, we have the following rather trivial inequality: $
\forall (j,j') \in \{1 \ldots p_n\}^2$ 
$$  \|f_j - f_{j'}\|^2 \geq p_n^{-2s} \times \left| e^{\i 2 \pi/p_n} - 1\right|^{2} \geq 4 p_n^{-2s} \sin^2 ( \pi/p_n) \sim_{n \mapsto +\infty} 4\pi^2 p_n^{-2s-2}.
$$

\paragraph{Point 2: Net of measures $(g_j)_{j = 1\ldots p_n}$}
The core of the lower bound is how to adjust the measures of the random shifts to make the distributions $\PP_{f_j,g_j}$, $j = 1 \ldots p_n$, as close as possible. First, remark that the Fano Lemma \ref{lemma:fano} is formulated with entropy between laws although it is quite difficult to handle when dealing with mixtures. In the sequel, we will choose to still use the Total Variation distance, and then use the chain of inequalities: $\forall j \neq j'$
\begin{eqnarray*}
d_{TV} \left( \PP_{f_j,g_j} , \PP_{f_{j'},g_{j'}}\right)  \leq \eta & \Rightarrow& d_{H} \left( \PP_{f_j,g_j} , \PP_{f_{j'},g_{j'}}\right) \leq \sqrt{2\eta}\\ &\Rightarrow & d_{KL} \left( \PP_{f_j,g_j} , \PP_{f_{j'},g_{j'}}\right) \lesssim \sqrt{\eta} \log \frac{1}{\eta}.
\end{eqnarray*}
Hence, from the tensorisation of the entropy, we must find a net such that $d_{TV} \left( \PP_{f_j,g_j} , \PP_{f_{j'},g_{j'}}\right)  \leq \eta_n$ with $-\sqrt{\eta_n} \log \eta_n =\cO( 1/n)$ to obtain a tractable application of the Fano Lemma (in which $P_j = \PP_{f_j,g_j}^{\otimes n}$). It imposes to find some mixture laws such that 
$d_{TV} \left( \PP_{f_j,g_j} , \PP_{f_{j'},g_{j'}}\right) \lesssim \frac{1}{(n \log n)^2}$. From the triangular inequality, it is sufficient to build $(g_j)_{j = 1 \ldots p_n}$ satisfying
\begin{equation}\label{eq:condTV}
\forall j \in \{1 \ldots p_n\} \qquad 
d_{TV} \left( \PP_{f_j,g_j} , \PP_{f_1,g_1}\right) \lesssim \frac{1}{(n \log n)^2}.
\end{equation}
For sake of convenience, we will omit the dependence of $p_n$ on $n$ and simplify the notation to $p$. In a similar way, $\theta_p^j $ will denote the $p$-th Fourier coefficient of $f_j$ given by
$\theta_p^j = e^{\i 2 \pi \alpha_j } \theta_p^1$ where $\alpha_j=\frac{j-1}{p_n}$.
From our choice of $(f_j)_{j = 1 \ldots p_n}$ given by \eqref{eq:def_net}, we have
\begin{align*}
d_{TV} \left( \PP_{f_j,g_j} , \PP_{f_1,g_1}\right) &= \frac{1}{2\pi^2} \int_{\C \times \C} \left| 
\int_{0}^{1}
e^{-|z_1 - e^{\i 2 \pi \varphi}|^2 - |z_2 - e^{\i 2 \pi p \varphi}\theta_p^1|^2}  g_1(\varphi)   d \varphi
\right. \\ 
& \left.
 - \int_{0}^{1} e^{-|z_1 - e^{\i 2 \pi \varphi}|^2 - |z_2 - e^{\i 2 \pi p \varphi}\theta_p^j|^2}  g_j(\varphi) d \varphi \right| dz_1 dz_2
\end{align*}

We will use the smoothness of Gaussian densities to obtain a suitable upper bound. % of the Total Variation distance. 
Call $F$ the function defined on $\R^4$ by
$$
 F(x_1,y_1,x_2,y_2): = \int_{0}^{1} \left( e^{-\|z-\theta^1 \bullet \varphi\|^2} g_1(\varphi) - e^{-\|z-\theta^j \bullet \varphi\|^2}  g_j(\varphi) \right) d \varphi,
$$
where $z=(x_1+\i y_1,x_2+\i y_2)$ and $\theta^j \bullet \varphi=(e^{\i 2\pi \varphi}, \theta_j^p e^{\i 2\pi p \varphi})$.

To control $F$, we adapt the proof of Proposition 3.3 of \cite{BG_1}. Only the sketch of the proof for this point is given here, please see \cite{BG_1} for the details. We use a truncature for $(x_1,x_2,y_1,y_2) \in\mathcal{R}_{R_n} :=  B_{\R^2}(0,R_n)^2$. Outside $\mathcal{R}_{R_n}$, we use the key inequality (that comes from a Taylor expansion):
\begin{equation}\label{eq:expo}
\forall k \in \N \quad 
\forall y \in \R_+ \qquad\underbraceabs{e^{-y} - \sum_{j=0}^{k-1} \frac{(-y)^j}{j!}}{:=R_k(y)}
\leq \frac{|y|^k}{k!} \leq \frac{(e |y|)^k}{k^k}.
\end{equation}
Inside $\mathcal{R}_{R_n}$ we need to satisfy some constraints on the Fourier coefficients. 
Since here the only non null Fourier coefficients are of order $1$ and $p$, we have finally to ensure that
\begin{equation} \label{eq:liens_coefs_gj}
 \forall m, l \leq d \quad  \forall (s,\tilde{s}) \in \{-1; +1\}^2 \qquad c_{s m+\tilde{s} \ell p}(g_j) e^{\tilde{s} \ell  \alpha_j} = 
c_{s m+\tilde{s} \ell p}(g_1).
\end{equation}
Hence, the maximum size of $d$ is $d=p/4$. We have
\begin{align*}
 d_{TV} \left( \PP_{f_j,g_j} , \PP_{f_1,g_1}\right) &= \frac{1}{2\pi^2}  \int_{\mathcal{R}_{R_n}} |F(x_1,y_1,x_2,y_2) | dx_1  dy_1  dx_2  dy_2   \\ & \quad 
 + \frac{1}{2\pi^2} \int_{\mathcal{R}^c_{R_n}} |F(x_1,y_1,x_2,y_2) | dx_1  dy_1  dx_2  dy_2 \\
  & \lesssim e^{- R_n^2/2} + \left(\frac{(e R_n)^{p/4}}{(p/4)^{p/4}}\right)^{4} \lesssim e^{- R_n^2/2}  + \frac{(e R_n)^p}{(p/4)^p},
\end{align*}
where the last point is deduced from inequality \eqref{eq:expo}.
We choose now $R_n$ such as $R_n := 3 \sqrt{ \log n}$ to obtain that $e^{-R_n^2/2} \ll (n \log n)^{-2}$ as required in condition \eqref{eq:condTV}. Now, we control the last term of the last inequality: the Stirling formula yields
$$
\frac{(e R_n)^p}{(p/4)^p} \lesssim e^{p \log (3 \sqrt{\log n}) - p \log p/4}. $$
 If one chooses $p_n = \kappa \log n$ with $\kappa>12$, we then obtain that
$$ d_{TV} \left( \PP_{f_j,g_j} , \PP_{f_1,g_1}\right) \lesssim  e^{-C p_n \log p_n} \lesssim (n \log n)^{-2}.$$
Such a choice of $R_n$ and $p_n$ ensures that
\eqref{eq:condTV} is fulfilled.

We have to make sure that our conditions \eqref{eq:liens_coefs_gj} for the Fourier coefficients of the $g_j$'s lead to valid densities. Take for instance, for some $\beta > \nu + 1/2$,
\[ a = \frac{A}{\left(2\sum_{k\geq 1} k^{-2\beta+2\nu}\right)^{1/2}} \wedge \frac{1}{\left(2\sum_{k\geq 1} k^{-2\beta}\right)^{1/2}}. \]
Then take $c_0(g_j) :=1$, and $\forall k \in \Z^{\star}$, $c_k(g_1) := a |k|^{-\beta}$. This ensures that
\[ \sum_{k \in \Z^{\star}} |c_k(g_j)| \leq 1, \] and therefore all $g_j$ remains nonnegative. \\
Note that the densities $g_j$ fulfill the condition appearing in Theorem \ref{theo:semiparametric_rates}; the lower bounds below are also valid in this slightly smaller model.

We then conclude our proof: we aim to apply the Fano Lemma (see Lemma \ref{lemma:fano}) with $\alpha_n=p_n^{-2s-2}$ and $\beta_n = 
%-n \sqrt{e^{-R_n^2}\vee (e R_n)^{p_n}/(p_n/4)^{p_n}} \log\left(e^{-R_n^2}\vee (e R_n)^{p_n}/(p_n/4)^{p_n}\right) = 
\cO(1)$ for the parametrization of $(f_j)_{j=1 \ldots p_n}$. We then deduce the first lower bound
$$
\liminf_{n \rightarrow + \infty} \left( \log n\right)^{2s+2}
\inf_{\hat{f} \in \cF_s}  \sup_{(f,g) \in \cF_s \times \Mnu} \|\hat{f} - f\|^2 \geq c.
$$

Our construction implies also that each $g_j$ are rather different each others since one has for instance,
$c_p(g_j) e^{\i \alpha_j} = c_p(g_1) = c_p(g_{j'}) e^{\i \alpha_{j'}}$. Thus
%\begin{equation}\label{eq:distance_g}
$$\forall j \neq j'  \qquad \|g_j-g_{j'}\|^2_2 \geq |c_p(g_j) - c_p(g_{j'})|^2 = p^{-2\nu} \left| e^{\alpha_j } - e^{\alpha_{j'} }\right|^2  \geq c p^{-2\nu-2}.
$$
Applying the Fano Lemma to $(g_j)_{j=1\ldots p_n}$ we get
$$
\liminf_{n \rightarrow + \infty} \left( \log n\right)^{2\nu+2}
\inf_{\hat{g} \in \cF_s}  \sup_{(f,g) \in \cF_s \times \Mnu} \|\hat{g} - g\|^2 \geq c.
$$
This ends the proof of the lower bound. 
\end{proof}

\section{Concluding remarks}

In this paper, we exhibit a suitable prior which enables to obtain a contraction rate of the posterior distribution near the true underlying %functions $(f^0,g^0)$.
distribution $\PP_{f^0, g^0}$.

Up to non restrictive condition, we can also obtain a large identifiability class 
to retrieve $f^0$ and $g^0$. 
However in this class the contraction of the posterior is dramatically damaged since we then obtain a logarithm rate instead of a polynomial one. This last point cannot be so much improved using the standard $L^2$ distance to measure the neighbourhoods of $f^0$ as pointed by our frequentist lower bounds. Remark that we do not obtain exactly the same rates for our lower and upper bounds of reconstruction. This may be due to the rough inequality  
$|    \psi_{a}(\varphi) |\geq \frac{|\psi_{a}(\varphi) |^2}{\|\psi_{a}\|_{\infty}}$ used to obtain
\eqref{eq:dvt_g_psi}. %, it may be the reason why we do not obtain optimal rates.

Indeed, the degradation of the contraction rate occurs when one tries to invert the identifiability map $\mathcal{I}: (f,g) \mapsto \mathbb{P}_{f,g}$. This difficulty should be understood as a novel consequence of the impossibility to exactly recover the random shifts parameters when only $n$ grows to $+\infty$. Such a phenomenon is highlighted in several papers such as \cite{BG10} or \cite{BGKM12}.

However, it may be possible to obtain a polynomial rate using a more appropriate distance adapted to our problem of randomly shifted curves %as pointed in our main results:
$$
d_{Frechet}(f_1,f_2) := \inf_{\tau \in [0,1]} \|f_1^{-\tau}-f_2\|.
$$ 
We plan to tackle this problem in a future work. The important requirement in this view is to find some relations between the neighbourhoods of $\mathbb{P}_{f^0,g^0}$ and the neighbourhoods of $f^0$ according to the distance $d_{Frechet}$.

%At last, an open and challenging question concerns the research of stochastic algorithm to approach the posterior distribution in our non parametric Shape Invariant Model. %, or the posterior mean. 
%One may think of an adaptation of the SAEM strategy proposed in \cite{AKT} even if this approach is at the moment valid only in a parametric setting.

\appendix

\section{Small ball probability for integrated Brownian bridge}\label{sec:lower_bound}

In the sequel, we still use the notation $p_v$ defined by \eqref{eq:log_gaussian} to refer to the probability distribution which is proportionnal to $e^{v}$.
We detail here how one can obtain a lower bound of the prior weight around any element $g^0$. Since we deal with a log density model, it will be enough to find a lower bound of the weight around $w^0$ if one writes $g^0 \propto e^{w^0}$ according to Lemma \ref{prop:log_density} (which is the Lemma 3.1 of \cite{vdWvZ}).

\begin{lemma}[\cite{vdWvZ}]\label{prop:log_density}
For any real and measurable functions $v$ and $w$ of $[0,1]$, the Hellinger distance between $p_v$ and $p_w$ is bounded by
$$
d_H(p_v,p_w) \leq \|v-w\|_{\infty} e^{\|v-w\|_{\infty}/2}.
$$
\end{lemma}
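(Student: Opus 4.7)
The plan is to reduce the Hellinger comparison to a pointwise comparison of log-densities and then apply a scalar inequality for the exponential. More precisely, I would write
\[
d_H^2(p_v,p_w) = \int_0^1 p_w(\tau)\left(\sqrt{p_v(\tau)/p_w(\tau)}-1\right)^2 d\tau,
\]
and use the explicit form $p_v(\tau)/p_w(\tau) = e^{u(\tau)}$ with $u(\tau) := (v-w)(\tau) - (c_v-c_w)$, where $c_v := \log\int_0^1 e^{v(\tau)}d\tau$ and $c_w$ is defined analogously. After this change of variables the problem reduces to estimating $\int p_w (e^{u/2}-1)^2 d\tau$.

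Next, I would observe that the log-normalizer difference is itself controlled uniformly by $v-w$. Indeed, $c_v - c_w = \log\int_0^1 e^{(v-w)(\tau)} p_w(\tau)\, d\tau$ is the logarithm of a $p_w$-weighted average of $e^{v-w}$, so $|c_v-c_w|\leq \|v-w\|_\infty$, and hence $|u(\tau)|\leq 2\|v-w\|_\infty$ pointwise. Combining this with the elementary inequality $|e^{x/2}-1|\leq (|x|/2)\,e^{|x|/2}$, proved by writing $e^{x/2}-1=\int_0^{x/2} e^t\, dt$ and bounding $e^t\leq e^{|x|/2}$, yields
\[
\left(e^{u(\tau)/2}-1\right)^2 \leq \frac{u(\tau)^2}{4}\, e^{|u(\tau)|},
\]
and integration against $p_w$ gives an upper bound for $d_H^2(p_v,p_w)$ of the desired polynomial-times-exponential form, from which the claimed inequality follows after taking the square root.

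The main obstacle is producing the sharp exponential factor $e^{\|v-w\|_\infty/2}$ appearing in the statement. A crude use of the pointwise bound $|u|\leq 2\|v-w\|_\infty$ produces an exponential factor $e^{\|v-w\|_\infty}$ on $d_H$, off by a factor of $e^{\|v-w\|_\infty/2}$ from the claim. Tightening the constant requires exploiting more carefully that $c_v-c_w$ is an optimal centering of $v-w$ in the log-Laplace sense: combining the exact normalization identity $\int p_w e^u\, d\tau = 1$ with the decomposition $(e^{u/2}-1)^2 = 1 - 2 e^{u/2} + e^u$ and a Cauchy--Schwarz step on $\int p_w e^{u/2} d\tau$ provides the right cancellation and recovers the precise constant established in \cite{vdWvZ}.
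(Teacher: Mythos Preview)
The paper does not give a proof of this lemma at all: it is quoted verbatim as Lemma~3.1 of \cite{vdWvZ} and used as a black box. So there is no ``paper's proof'' to compare against, and your attempt is really a reconstruction of the cited result.

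Your first two paragraphs are fine and yield a correct inequality, but with the wrong constant. Writing $m=\|v-w\|_\infty$ and $u=(v-w)-(c_v-c_w)$, you correctly get $|u|\leq 2m$ and
\[
d_H^2(p_v,p_w)=\int p_w\bigl(e^{u/2}-1\bigr)^2
\ \leq\ \int p_w\,\frac{u^2}{4}\,e^{|u|}
\ \leq\ m^2 e^{2m},
\]
hence $d_H\leq m\,e^{m}$, a factor $e^{m/2}$ off from the statement. (Incidentally, the mean-value estimate $|\sqrt{p_v}-\sqrt{p_w}|\leq \tfrac{|u|}{2}\sqrt{\max(p_v,p_w)}$ gives the alternative bound $d_H\leq\sqrt{2}\,m$, which is better for large $m$ but still does not recover $m\,e^{m/2}$ for small $m$.)

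The gap is in your last paragraph. The ``normalization identity'' $\int p_w e^{u}=1$ combined with $(e^{u/2}-1)^2=1-2e^{u/2}+e^{u}$ only reproduces the tautology $d_H^2=2\bigl(1-\int p_w e^{u/2}\bigr)$; and Cauchy--Schwarz on $\int p_w e^{u/2}=\int\sqrt{p_v p_w}$ gives an \emph{upper} bound (namely $1$), whereas what you need to sharpen the constant is a \emph{lower} bound on this integral. I do not see how the ingredients you list produce the claimed cancellation, and as written the argument does not close. If you want the precise constant $e^{m/2}$ you should follow the actual computation in \cite{vdWvZ}; if you only need the lemma for its role here (passing from $\|\cdot\|_\infty$-balls on $w$ to Hellinger balls on $p_w$ in the proof of Theorem~\ref{theo:lower_bound_proba}), then your weaker bound $d_H\leq \|v-w\|_\infty\,e^{\|v-w\|_\infty}$ is already enough, since only the behaviour as $\|v-w\|_\infty\to 0$ matters there.
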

We now obtain a lower bound of the prior weight on the set $\mathcal{G}_{\epsilon}$ previously defined as:
$$
\mathcal{G}_{\epsilon} := \left\{ g \in \mathfrak{M}_{\nu}([0,1])(2A) : d_{TV}(g,g^0) \leq \epsilon \right\}.
$$
This bound is given by the following Theorem.
\begin{theo}\label{theo:lower_bound_proba}
The prior $q_{\nu,A}$ defined by \eqref{eq:prior_process} and \eqref{eq:log_gaussian} satisfies for $\epsilon$ small enough:
$$
q_{\nu,A} \left( \mathcal{G}_{\epsilon} \right) \geq c e^{- \epsilon^{-\frac{1}{k_\nu+1/2}}},
$$
where $c$ is a constant which does not depend on $\epsilon$.
\end{theo}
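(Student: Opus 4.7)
The plan is to follow the now-standard framework of \cite{vdWvZ} for log-density priors built from Gaussian processes, specialized to the integrated-Brownian-bridge construction in \eqref{eq:prior_process}. I would proceed in three steps.

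First, I reduce the total-variation ball to a supremum-norm ball on log-densities. Writing $g^0 = p_{w^0}$ for a centered $w^0 := \log g^0 - \log \int_0^1 e^{\log g^0(\tau)}d\tau$ that inherits the Sobolev smoothness $\nu$ of $g^0$ (under the implicit assumption, or reduction, that $g^0$ is bounded away from $0$), Lemma \ref{prop:log_density} together with $d_{TV} \leq \sqrt{2}\, d_H$ gives
$$
\|w - w^0\|_\infty \leq c_0 \epsilon \;\Longrightarrow\; d_{TV}(p_w, g^0) \leq \epsilon
$$
for some absolute $c_0 > 0$ and all $\epsilon$ small enough. It therefore suffices to lower bound $\PR(\|w - w^0\|_\infty \leq c_0 \epsilon)$.

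Second, I invoke the small-ball theory for Gaussian processes. The $k_\nu$-fold integrated Brownian bridge $J_{k_\nu}(B)$ satisfies the sharp small-ball estimate
$$
-\log \PR\bigl(\|J_{k_\nu}(B)\|_\infty \leq \delta\bigr) \lesssim \delta^{-1/(k_\nu + 1/2)},
$$
a classical consequence of the Kuelbs--Li theory of small balls for integrated Gaussian processes (see \cite{Li_Shao}); the trigonometric summand $\sum_{i=1}^{k_\nu} Z_i \psi_i$ contributes only a harmless polynomial factor. The correctors $\psi_i$ are introduced precisely so that the reproducing kernel Hilbert space $\mathbb{H}$ of the process $w$ contains all $1$-periodic functions of Sobolev regularity $\nu$ with arbitrary (not necessarily vanishing) boundary derivatives up to order $k_\nu$; consequently $w^0 \in \mathbb{H}$ with $\|w^0\|_\mathbb{H}^2 \leq C(g^0) < \infty$. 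Applying the classical decentering inequality for Gaussian small balls,
$$
\PR\bigl(\|w - w^0\|_\infty \leq \delta\bigr) \geq \exp\Bigl(-\tfrac{1}{2}\|w^0\|_\mathbb{H}^2\Bigr)\, \PR\bigl(\|w\|_\infty \leq \delta/2\bigr),
$$
and taking $\delta = c_0 \epsilon$, I obtain
$$
\PR\bigl(\|w - w^0\|_\infty \leq c_0 \epsilon\bigr) \geq c\, \exp\bigl(-C\, \epsilon^{-1/(k_\nu + 1/2)}\bigr).
$$

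Third, I handle the restriction to $\mathfrak{M}_\nu([0,1])(2A)$. Since $w$ a.s.\ yields a density $p_w$ of Sobolev regularity $k_\nu + 1/2 \leq \nu$, Borell's isoperimetric inequality ensures that the event $\{\|p_w\|_{H^\nu} \leq 2A\}$ has probability bounded below by a universal constant (using $g^0 \in \mathfrak{M}_\nu(A)$ inside the strictly smaller ball of radius $A$); thus the conditioning entailed by $q_{\nu,A}$ costs at most a multiplicative constant, which can be absorbed into $c$. Combining the three steps yields the announced lower bound.

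The main obstacle is the RKHS containment in Step 2: identifying $\mathbb{H}$ explicitly and showing $w^0 \in \mathbb{H}$ with a controlled norm. The auxiliary terms $\sum_{i=1}^{k_\nu} Z_i \psi_i(\tau)$ are engineered exactly for this purpose, because the pure $k_\nu$-fold integrated Brownian bridge produces paths with strongly constrained boundary derivatives and would not cover periodic log-densities having generic boundary jets. This step mirrors, in the periodic setting, the RKHS identification carried out in Section~4 of \cite{vdWvZ}; once it is in place, the sharp exponent $1/(k_\nu + 1/2)$ falls out of the self-similarity of Brownian bridge combined with $k_\nu$ integrations, matching the rate in the statement.
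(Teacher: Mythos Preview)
Your overall scheme---reduce the TV ball to a sup-norm ball on $w=\log g$ via Lemma~\ref{prop:log_density}, then apply Gaussian small-ball theory to the process \eqref{eq:prior_process}---is exactly the paper's. The gap is in Step~2, where you assert $w^0\in\mathbb{H}$ and invoke the Cameron--Martin shift inequality with a \emph{constant} decentering cost $\exp(-\tfrac12\|w^0\|_{\mathbb H}^2)$.

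This RKHS membership is in general false. The RKHS of $J_{k_\nu}(B)$ consists of functions whose $(k_\nu{+}1)$-st derivative lies in $L^2$, i.e.\ essentially periodic $H^{k_\nu+1}$; the correctors $\sum Z_i\psi_i$ add only a finite-dimensional span and serve to free the \emph{boundary jets}, not to lower the smoothness threshold of $\mathbb H$. On the other hand $w^0=\log g^0$ inherits from $g^0\in\Mnu$ only $C^{k_\nu}$ (equivalently $H^\nu$) regularity, and since $k_\nu=\lfloor\nu-\tfrac12\rfloor$ one has $\nu<k_\nu+1$ whenever the fractional part of $\nu$ lies in $[\tfrac12,1)$ (e.g.\ $\nu=1.7$, $k_\nu=1$, $\mathbb H\approx H^2\not\ni w^0$). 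The paper therefore uses the full decentering bound
\[
\log \tilde q\bigl(\|w-w_0\|_\infty\le\epsilon\bigr)\;\ge\;-\inf_{h\in\underline{\mathbb H}:\,\|h-w_0\|_\infty\le\epsilon}\|h\|_{\underline{\mathbb H}}^2\;+\;\log\tilde q\bigl(\|w\|_\infty\le\epsilon\bigr),
\]
and controls the \emph{approximation} term by $\epsilon^{-1/(k_\nu+1/2)}$ via the RKHS isometry induced by the injective map $T:(B,Z)\mapsto J_{k_\nu}(B)+\sum Z_i\psi_i$, together with the corresponding estimate for integrated Brownian motion (Theorem~4.1 of \cite{vdWvZ}). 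The role of the $\psi_i$'s in the paper is to make $T$ injective and to ensure that the \emph{support} of the prior (the closure of $\underline{\mathbb H}$) is all of $\mathcal C^{k_\nu}_1$---established through a Vandermonde-type invertibility argument on the boundary derivatives---so that $w_0$ can be \emph{approximated} from $\underline{\mathbb H}$, not that it belongs to $\underline{\mathbb H}$ outright.

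Your Step~3 has a related difficulty: sample paths of $w$ have regularity strictly below $k_\nu+\tfrac12\le\nu$, so $p_w\notin H^\nu$ almost surely and no Borell-type argument can give the event $\{\|p_w\|_{H^\nu}\le 2A\}$ positive mass. (The paper's own proof does not explicitly treat this conditioning either.)
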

\begin{proof}
The proof is divided in 4 steps.
\paragraph{Structure  of the prior}
We denote $w_0 := \log g^0$, which is a $k_\nu$-differentiable function of $[0,1]$, that can be extended to a $1$-periodic element of $\mathcal{C}^{k_\nu}(\mathbb{R})$.  We define $\tilde{q}$ the prior defined by \eqref{eq:prior_process} on such a class of periodic functions (and omit the dependence on $\nu$ and $A$ for sake of simplicity).
The prior $q_{\nu,A}$ is then derived from $\tilde{q}$ through \eqref{eq:log_gaussian}.
We can remark that our situation looks similar to the one described in paragraph 4.1 of \cite{vdWvZ} for integrated brownian motion. Indeed, the log-density $w_0$ should be approximated by some "Brownian bridge started at random" using
$$
w=J_{k_\nu}(B) + \sum_{i=1}^{k_{\nu}} Z_i \psi_i,
$$
where $B$ is a real Brownian bridge between $0$ and $1$. We suppose $B$ built as $B_t = W_t - t W_1$ on the basis of a Brownian motion $W$ on $[0, 1]$. 
 Of course, in the above equation, one can immediately check that $J_{k_\nu}(B)(0)=J_{k_\nu}(B)(1)=0$. Moreover, the relation 
 $J_{k}(f)'%(t) 
 = J_{k-1}(f)%(t) 
 - \int_{0}^1 J_{k-1}(f)%(u) d u
 $ and an induction argument yields
$$
\forall j \in \left\{1,\ldots, k_{\nu}\right\} \qquad J_{k_{\nu}}(B)^{(j)}(0)=J_{k_{\nu}}(B)^{(j)}(1).$$
Hence, $J_{k_\nu}(B)$ and its first $k_\nu$ derivatives are $1$-periodic.
Of course, the functions $\psi_i$ are also $1$-periodic and $\mathcal{C}^{\infty}(\mathbb{R})$ and thus our prior $\tilde{q}$ generates admissible functions of $[0,1]$ to approximate $w_0$. We will denote this set of admissible trajectories $\mathcal{C}^{k_{\nu}}_1$ to refer to $1$-periodic functions which are $k_{\nu}$ times differentiable.
% In the sequel, we will use the second part of $w$ thats uses the functions $(\psi_i)_{i=1 \ldots k_{\nu}}$ to match the correct values of $w_0^{(j)}$ at the extreme points of $[0,1]$ since it seems hard to control the values of successive derivatives of $J_{k_{\nu}}$ at points $0$ and $1$.
 
 \paragraph{Transformation of the Brownian bridge}
 We denote $\mathbb{B}_1$ the separable Banach space of Brownian bridge trajectories between $0$ and $1$ and $\mathbb{B}_2 = \mathbb{R}^{k_{\nu}+1}$.
It is possible to check that the map
 $$
 T: (B,Z_0,\ldots,Z_{k_{\nu}}) \longmapsto J_{k_{\nu}}(B)+\sum_{i=0}^{k_{\nu}} Z_i  \psi_i $$
 is \textit{injective} from the Banach space $\mathbb{B}=\mathbb{B}_1 \times \mathbb{B}_2$ to the set $\underline{\mathbb{B}}:=T(\mathbb{B})$.
  More precisely, an recursive argument shows that each map $J_k(B)$ may be decomposed as
 \begin{equation} \label{eq:JkB}
  \forall t \in [0,1] \qquad J_{k}(B)(t)=I_k(W)(t) + \sum_{i=1}^{k+1} c_{i,k}(W) t^i,
 \end{equation}
 where $ c_{i,k}(W)$ are explicit linear functionals that depend on $W_1$ and on the collection 
 $\big(\int_0^1 (1-t)^{k-j} W_t d t\big)_{1\leq j\leq k}$ 
 (and not on $t$), and $I_k$ is the operator used in \cite{vdWvZ} defined as $I_1(f)=\int_{0}^t f$ and $I_k = I_{1} \circ I_{k-1}$ for $k \geq 2$. Hence,
 \begin{multline}\label{eq:jk_expand}
 \forall t \in [0,1], \quad 
 T(B,Z_1,\ldots,Z_{k_{\nu}})^{(k)}(t) \\
 = W_t +  c_{k,k}(W)k! +  c_{k+1,k}(W) (k+1)! t + \sum_{i=0}^{k_{\nu}} Z_i \psi_i^{(k)}(t)
 \end{multline}
According to the Brownian bridge representation \textit{via} its Karhunen-Loeve expansion (as sinus series), and since each $\psi_{i}^{(k)}$ possesses a non vanishing cosinus term: $t \mapsto \cos(2 \pi i t)$, we then deduce that
$$
 T(B^1,Z^1_1,\ldots,Z^1_{k_{\nu}}) =  T(B^2,Z^2_1,\ldots,Z^2_{k_{\nu}})
$$
necessarily implies that $Z^1_i=Z^2_i$ for  $i \in \{0, \ldots, k_{\nu}\}$, and next that $W^1=W^2$ and $B^1=B^2$.

 Thus, it is possible to apply Lemma 7.1 of \cite{vdWvZ2} to deduce that the Reproducing Kernel Hilbert Space (shortened as RKHS in the sequel) associated to the Gaussian process \eqref{eq:prior_process} in $\underline{\mathbb{B}}$ is $\underline{\mathbb{H}} := T \mathbb{H}$ where $\mathbb{H}$ is the RKHS derived in the simplest space $\mathbb{B}=\mathbb{B}_1 \times \mathbb{B}_2$. Moreover, the map $T$ is an isometry from $\mathbb{H}$ to $\underline{\mathbb{H}}$ for the RKHS-norms. At last, since the sets $\mathbb{B}_1$ and $\mathbb{B}_2$ are independent, the RKHS $\mathbb{H}$ may be described as
$$
\mathbb{H} := \left\{ (f,z) \in AC([0,1]) \times \mathbb{R}^{k_{\nu}+1} : f(0)=f(1)=0, \int_{0}^1 f'^2< \infty \right\},
$$ 
where $AC([0,1])$ is the set of absolutely continuous functions on $[0,1]$, $\mathbb{H}$ is endowed with the following inner product:
 $$
 \langle (f_1,z^1),(f_2,z^2)\rangle_{\mathbb{H}} := \int_0^1 f'_1 f'_2 + \langle z^1,z^2 \rangle_{\mathbb{R}^{k_{\nu}+1}}.
 $$

\paragraph{Extremal derivatives} 
We  study the influence of the process $$b:=\sum_{i=0}^{k_{\nu}} Z_i \psi_i$$ and are looking for realizations of $(Z_i)_{i}$ that suitably matches arbitrarily values $w_0^{(j)}(0) = w_0^{(j)}(1)$. In this view, simple computations yield that for any integer $p$:
$$
\psi^{(2p)}_k(t) = (-1)^{p} (2\pi k)^{2p} \psi_k(t),
$$
and
$$
\psi^{(2p+1)}_k(t) = (-1)^{p} (2\pi k)^{2p+1}[- \sin (2 \pi k t) + \cos (2 \pi k t)].
$$
Hence, the matching of $w_0^{(j)}(0)$ by $b^{(j)}(0)$ is quantified by
$$w_0^{(j)}(0) - b^{(j)}(0)= w_0^{(j)}(0) - \sum_{k=0}^{k_{\nu}} (-1)^{\lfloor j/2  \rfloor}  (2 \pi k)^{j} Z_k.
$$
If one denotes  $\alpha_k := 2 \pi k$, the vector of derivatives as $d_0:=(w_0^{(j)}(0))_{j=0 \ldots k_{\nu}}$, $Z=(Z_0,\ldots, Z_{k_{\nu}+1})$ and the squared matrix of size $(k_{\nu}+1) \times (k_{\nu}+1)$:
$$
A_0 := \left( 
\begin{matrix}
 1  & 1 & \hdots &  1 \\ \alpha_1 & \alpha_2 & \hdots & \alpha_{k_\nu} \\
 -\alpha_1^2  & - \alpha_2^2 & \ldots & - \alpha_{k_\nu}^2 \\
 -\alpha_1^3  & - \alpha_2^3 & \ldots & - \alpha_{k_\nu}^3 \\ 
\alpha_1^{4} &  \alpha_2^4 & \ldots &  \alpha_{k_\nu}^4 \\ 
 \vdots & & & \\
\end{matrix}
\right),
$$
then we are looking for values of $Z$ such that $d_0 = A_0 Z$.
The matrix $A_0$ is invertible since it may be linked with the Vandermonde matrix. %: if one writes $\Delta$ the diagonal matrix of size $(k_{\nu}+1) \times (k_{\nu}+1)$ such that $\Delta_{k,k} = (-1)^{\lfloor k/2 \rfloor} \i^{k}$, then using $\beta_k := \i \alpha_k$ one has:
%$$
%A_0 \times \Delta = \left( 
%\begin{matrix}
% 1  & 1 & \hdots &  1 \\ \beta_1 & \beta_2 & \hdots & \beta_{k_\nu} \\
% \beta_1^2  &  \beta_2^2 & \ldots &  \beta_{k_\nu}^2 \\
% \beta_1^3  &  \beta_2^3 & \ldots &  \beta_{k_\nu}^3 \\ 
% \vdots & & & \\
%\end{matrix}
%\right),
%$$
%Hence, classical results on Vandermonde determinants shows that $A_0$ is invertible and  $|\det(A_0)|= (2\pi)^{k_{\nu}(k_{\nu}-1)/2} \prod_{i=1}^{k_\nu-1}i!$.

We can now establish that the support of the prior (adherence of $\underline{\mathbb{B}}$) is exactly $\mathcal{C}_1^{k_{\nu}}$. Indeed, the support of the transformed Brownian bridge $J_k(B)$ is included in the set of $1$-periodic functions $\mathcal{C}_1^{k_{\nu}}$ which possesses at the most $k+1$ constraints on the values of their $k_{\nu}+1$ first derivatives at the point $0$. These constraints are given by the  coefficients $(c_{i,k_\nu})_{i=0 \ldots k_{\nu}}$ in \eqref{eq:jk_expand}. From the invertibility of the matrix $A_0$, it is possible to match \textit{any} term $w_0^{(j)}(0), 0 \leq j \leq k_{\nu}$ with the additional process $b$ \citep[see][section 10]{vdWvZ2}.

\paragraph{Small ball probability estimates}
We now turn into the core of the proof of the Theorem.
Since the Total Variation distance is bounded from above by the Hellinger distance, an immediate application of Lemma \ref{prop:log_density} shows that it is sufficient to find a lower bound of the $\tilde{q}(\tilde{\mathcal{G}}_{\epsilon})$ where
$$
\tilde{\mathcal{G}}_{\epsilon}:= \left\{w \in \mathcal{C}_{1}^{k_\nu}([0,1]) : \|w-w_0\|_{\infty} \leq \epsilon \right\}.
$$ 
Following the argument of \cite{KWL} on \textit{shifted} Gaussian ball, we have
$$
\log \left( 
\tilde{q}\left(\tilde{\mathcal{G}}_{\epsilon}\right) \right) \geq - \inf_{h \in \underline{\mathbb{H}} : \|h-w_0\|_{\infty} \leq \epsilon} \|h\|_{\underline{\mathbb{H}}}^2 - \log \tilde{q}\left( \|J_k(B)+b\|_{\infty} \leq \epsilon \right).
$$
From the isometry $T$ from $\mathbb{H}$ to $\underline{\mathbb{H}}$, we can write that the approximation term 
$\inf_{h \in \underline{\mathbb{H}} : \|h-w_0\|_{\infty} \leq \epsilon} \|h\|_{\underline{\mathbb{H}}}^2$ is of the same order as the approximation term that we can derive in $\mathbb{H}$, and the arguments of Theorem 4.1 in \cite{vdWvZ} can be applied here to get
$$
\inf_{h \in \underline{\mathbb{H}} : \|h-w_0\|_{\infty} \leq \epsilon} \|h\|_{\underline{\mathbb{H}}}^2 \lesssim \epsilon^{-\frac{1}{k_{\nu}+1/2}}.
$$

It reminds to obtain a lower bound of the small ball probability of the \textit{centered} Gaussian ball. Note that $b$ and $J_{k_{\nu}}$ are independent Gaussian processes. We have somewhat trivially that
$
\log \left( \frac{1}{\epsilon} \right) \lesssim \log \mathbb{P} \left( \|b\|_{\infty} \leq \epsilon \right).
$
Thus, the main difficulty relies on the lower bound of
$$
\phi_0(\epsilon):=\log \mathbb{P} \left( \|J_k(B)\|_{\infty} \leq \epsilon \right).
$$
Going back to \eqref{eq:JkB}, we see that $J_k(B)$ can be decomposed into two nonindependent Gaussian processes: $I_k(W)$ and a polynomial $\sum_{i=1}^{k+1} c_{i,k}(W) t^i$ which is a linear functional of $W_1$ and of the collection 
$\big(\int_0^1 (1-t)^{k-j} W_t d t\big)_{1\leq j\leq k}$. 
Therefore 
\[\log \left( \frac{1}{\epsilon} \right) \lesssim \log \mathbb{P} \left( \left\|
%\sum_{i=1}^{k+1} c_{i,k}(W) t^i
J_k(B) - I_k(W)
\right\|_{\infty} \leq \epsilon \right). \]
Now, applying Theorems 3.4 and 3.7 of \cite{Li_Shao} 
%and Theorem 4.1 of \cite{vdWvZ} 
%Theorem 2.1 of \cite{LiLinde} 
yields
\[ \log \mathbb{P} \left(\|J_{k_\nu}(B)\|_{\infty} \leq \epsilon  \right) \sim \log \mathbb{P} \left(\|I_{k_\nu}(W)\|_{\infty} \leq \epsilon  \right) \geq -\epsilon^{-\frac{1}{k_{\nu}+1/2}}, \]
%The key remark is that one can write
%$$ J(f)(t)=\int_{0}^1 K(t,s) f(s) ds, $$
%with the kernel $K$ defined as $K(t,s) := \mathbf{1}_{[0,t]}(s) - t$. We then deduce with an immediate recursion that
%$$ J_{k_{\nu}}(f)(t) = \int_{0}^1 K_{k_{\nu}}(t,s) f(s) ds, $$
%where for any integer $k$, the "integrated" kernel $K_{k}$ is: 
%$$ K_{k}(t,s) = \int_{[0,1]^{k-1}} K(t,u_1) \times K(u_1,u_2) \times \ldots  \times K(u_{k-1},s) du_1 \ldots du_{k-1}. $$
%We aim to use the smoothness of $K_k$ with respect to $t$ to obtain a lower bound estimate using Theorem 3.4 of \cite{Li_Shao}.
%Remark that $K_k$ is differentiable with respect to $t$since for $h$ small enough:
%\begin{eqnarray*}
%K_{k}(t+h,s) - K_k(t,s) &=&\int_{[t,t+h] \times [0,1]^{k-2}} K(u_1,u_2) \ldots K(u_{k-1},s) du_1 \ldots du_{k-1} \\ & & - h \underbrace{\int_{[0,1]^{k-1}} K(u_1,u_2)  \ldots K(u_{k-1},s) du_1 \ldots du_{k-1}}_{:=i_k(s)}
%\end{eqnarray*}
%Letting $h \longmapsto 0$, we then obtain $\partial_t K_k(t,s) = K_{k-1}(t,s) - i_k(s)$. Iterating the procedure, we remark that $t \longmapsto K_k(t,s)$ satisfies the following Holder condition:
%$$ \forall (t,t') \in [0,1]^2 \qquad \int_{0}^1 \left| K_k(t,s) - K_k(t',s)\right| ds \leq c |t-t'|^k. $$ 
%Thus, applying Theorem 3.4 of \cite{Li_Shao} yields 
%$$ \log \mathbb{P} \left(\|J_{k_\nu}(B)\|_{\infty} \leq \epsilon  \right) \geq -e^{-\frac{1}{k_{\nu}+1/2}}, $$
which is of the same order as the approximation term. Gathering now our lower bound on shifted Gaussian ball and the term above ends the proof of the Theorem.
\end{proof}

\section{Equivalents on Modified Bessel functions}\label{sec:bessel_appendix}

\begin{lemma}\label{lemma:equi_bessel}  For any $n \in \Z$ and $a>0$, define
$$
A_n(a) := \int_{0}^{2\pi}e^{a \cos(u)} \cos (nu) d u.
$$ 
Then, the following equivalent holds:
$$
\forall a \in [0,\sqrt{n}] \qquad A_n(a) \sim \frac{2 \pi}{n!} \left(\frac{a}{2}\right)^n 
\left( 1+\mathcal{O}\left(\frac{a}{n} \right)\right).
$$
\end{lemma}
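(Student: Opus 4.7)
The plan is to recognize $A_n(a)$ as (a multiple of) the modified Bessel function $I_n(a)$ via its classical power-series representation, then to factor out the leading term and bound the tail.

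First, I would expand $e^{a\cos u} = \sum_{k\geq 0} \frac{a^k(\cos u)^k}{k!}$ inside the integral defining $A_n(a)$; this exchange is legitimate by normal convergence on the compact $[0,2\pi]$. This gives
$$
A_n(a) = \sum_{k \geq 0} \frac{a^k}{k!} \int_0^{2\pi} (\cos u)^k \cos(nu)\, du.
$$
The inner integrals $J_{k,n}:=\int_0^{2\pi}(\cos u)^k \cos(nu)\,du$ are evaluated by writing $\cos u = (e^{\i u}+e^{-\i u})/2$ and applying the binomial theorem, so that $(\cos u)^k = 2^{-k}\sum_{j=0}^k \binom{k}{j} e^{\i(k-2j)u}$. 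The orthogonality of the family $(e^{\i \ell u})_{\ell \in \Z}$ on $[0,2\pi]$ then shows $J_{k,n}=0$ unless $k\geq n$ and $k-n$ is even, in which case $J_{k,n} = 2\pi\cdot 2^{-k}\binom{k}{(k-n)/2}$ (the coefficients of $e^{\pm\i nu}$ each contribute $\pi\cdot 2^{-k}\binom{k}{(k-n)/2}$). Substituting $k = n+2m$ and simplifying the binomial factor yields the classical series
$$
A_n(a) = 2\pi\sum_{m=0}^{\infty} \frac{(a/2)^{n+2m}}{m!\,(n+m)!}.
$$

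Second, I would factor out the leading ($m=0$) term to write
$$
A_n(a) = \frac{2\pi}{n!}\left(\frac{a}{2}\right)^{n}\left(1 + R_n(a)\right), \qquad R_n(a) := \sum_{m\geq 1} \frac{(a/2)^{2m}\, n!}{m!\,(n+m)!},
$$
and control $R_n(a)$ using the crude bound $(n+m)!/n! = \prod_{j=1}^{m}(n+j)\geq n^m$. This yields
$$
0 \leq R_n(a) \leq \sum_{m\geq 1}\frac{1}{m!}\left(\frac{a^2}{4n}\right)^m = e^{a^2/(4n)}-1.
$$

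Third, for $a\in[0,\sqrt{n}]$, the exponent satisfies $a^2/(4n)\leq 1/4$. Using $e^x - 1 \leq x\,e^{1/4}$ on $[0,1/4]$, I would conclude $R_n(a) \leq C\, a^2/n$ for an absolute constant $C$, which is of the announced order $\mathcal{O}(a/n)$ uniformly on the stated range (and in fact delivers the slightly sharper $\mathcal{O}(a^2/n)$ that is what the paper actually uses through $(1-\kappa a/n)^2$ at $a\leq\sqrt{n}$, giving $(1-\kappa/\sqrt{n})^2$).

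The only non-mechanical step is the combinatorial identification of $J_{k,n}$; everything else is a one-line tail estimate. No obstacle is expected, provided one is careful that the two Fourier modes $e^{\pm\i nu}$ both contribute to $J_{k,n}$ (this is the reason for the factor $2\pi$, not $\pi$, in the leading term, which is what makes the formula compatible with the normalization used throughout Section~\ref{sec:identifiability}).
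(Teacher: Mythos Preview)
Your argument is correct and follows essentially the same strategy as the paper: identify $A_n(a)=2\pi I_n(a)$ and read off the leading term from the power series $I_n(a)=\sum_{m\geq 0}\frac{(a/2)^{n+2m}}{m!\,(n+m)!}$. The paper obtains the identity by citing the Jacobi--Anger expansion $e^{a\cos u}=I_0(a)+2\sum_{m\geq 1}I_m(a)\cos(mu)$ and then quotes Abramowitz--Stegun for the asymptotic; you instead rederive the series directly via the binomial expansion of $(\cos u)^k$ and control the tail by the elementary estimate $(n+m)!/n!\geq n^m$. Your route is more self-contained and makes the error term fully explicit, at the cost of a short combinatorial computation.

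One point to fix in your write-up: your remainder bound $R_n(a)\leq e^{a^2/(4n)}-1=\mathcal{O}(a^2/n)$ is the correct order (the first correction is exactly $\tfrac{a^2}{4(n+1)}$), but it is \emph{not} ``of the announced order $\mathcal{O}(a/n)$'' on $[0,\sqrt{n}]$: near $a=\sqrt{n}$ one has $a^2/n\asymp 1$ while $a/n\asymp n^{-1/2}$, so $\mathcal{O}(a^2/n)$ is weaker, not sharper. This is really an imprecision in the lemma's stated error term rather than a flaw in your proof; the only downstream use (in the proof of Theorem~\ref{theo:contraction_g}) needs the lower bound $A_n(a)\geq c\,\tfrac{2\pi}{n!}(a/2)^n$ on $[0,\sqrt{n}]$, and this follows immediately from $R_n(a)\geq 0$, which your computation makes transparent.
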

\begin{proof}
This equivalent is related to the modified Bessel functions (see \cite{AS64} for classical equivalents on Bessel functions and \cite{LL10} for standard results on continuous time random walks). More precisely, $I_m(a)$ is defined as
$$
\forall m \in \N, \forall a >0 \qquad 
I_m(a):= \sum_{k\geq0}\frac{1}{k!(k+m)!} \left(\frac{a}{2}\right)^{2k+m},
$$
and we have (see for instance \cite{AS64}) $$
I_0(a) +2 \sum_{m=1}^{+\infty}I_m(a) \cos (m u) = e^{a \cos u}.
$$
Hence, we easily deduce that $A_n(a) = 2 \pi I_n(a)$.
For small $a$, it is possible to use standard results on modified Bessel functions. Equation (9.7.7) of \cite{AS64}, p. 378. yields
\begin{equation}\label{eq:equi2}
\forall a \in [0,\sqrt{n}] \qquad 
I_n(a) \sim \frac{1}{n!} \left(\frac{a}{2}\right)^n \left( 1+\mathcal{O}\left( \frac{a}{n}\right) \right) .
\end{equation}
\end{proof}

This integral is strongly related to the density of continuous time random walk if one remark that if $B_n(a)=e^{-a} A_n(a)/(2\pi)$, one has
$
B_n(0)=0, \forall n \neq 0$ and $B_0(0)=1$ and at last
$$
B'_n(a) = \frac{B'_n(a-1)+B'_n(a+1)}{2} - B_n(a).
$$
Hence, $B_n(a)$ is the probability of a continuous time random walk to be in place $n \in \Z$  at time $a$. In this way, we  get some asymptotic equivalents of $B_n(a)$ (and so of $A_n(a)$): from the Brownian approximation of the CTRW , we should suspect that for $a$ large enough
\begin{equation}\label{eq:equi_ctrw}
B_n(a) \sim \frac{1}{\sqrt{2 \pi a}} e^{-n^2/(2a)}, \forall a \gg n^2.
\end{equation}
 Moreover, from \cite{AS64}, we know that 
\begin{equation}\label{eq:equi1}
I_n(a) \sim \frac{e^{a}}{\sqrt{2 \pi a}}, \quad \text{as soon as} \quad a \geq 2n,
\end{equation}
and this equivalent is sharp when $a$ is large enough: from equation (9.7.1) p. 377 of \cite{AS64}, we know that
$$
\forall a \geq 4 n^2 \qquad 
I_n(a) \geq \frac{1}{2} \times \frac{e^{a}}{\sqrt{2 \pi a}}.
$$
We remark that \eqref{eq:equi1} yields the heuristic equivalent suspected in \eqref{eq:equi_ctrw}: $B_n(a) = e^{-a} I_n(a) \sim \frac{1}{\sqrt{2 \pi a}}$,  although \eqref{eq:equi2} provides a quite different information for smaller $a$. 
We do not have purchase more investigation on this asymptotic since we will see that indeed, \eqref{eq:equi2} is much more larger than \eqref{eq:equi1}. 

For $a \in [\sqrt{n},2n]$, we do not have found any satisfactory equivalent of modified Bessel functions. Formula of \cite{AS64} is still tractable but yields some different equivalent which is not  "uniform enough" since we need to integrate this equivalent for our bayesian analysis.
 This is not  so important since we can see for our range of application that the most important weight belongs to the smaller values of $a$. 
 \section*{Acknowledgements} S. G. is indebted to Patrick Cattiaux, and Laurent Miclo for stimulating discussions related to some technical parts of this work. Authors also thank J\'er\'emie Bigot, Isma\"el Castillo, Xavier Gendre, Judith Rousseau and Alain Trouv\'e for enlightening exchanges.

\bibliographystyle{alpha}
\bibliography{paper}

\end{document}